\documentclass[fleqn]{article}
\usepackage{graphicx} 
\usepackage{multirow}
\usepackage{cite}
\usepackage{xcolor}
\usepackage{bm}
\usepackage{caption}
\usepackage{float}
\usepackage{amsmath,amsfonts,amssymb,amsthm,amsrefs}
\usepackage [english]{babel}
\usepackage [autostyle, english = american]{csquotes}
\usepackage[linguistics]{forest}
\usepackage{pdfpages}
\usepackage{url}
\usepackage{nicematrix}
\allowdisplaybreaks

\title{Complementary families of approximating polynomials with applications to finite element methods applied to differential equations of arbitrary even spatial order}
\author{Peter Moore}

\begin{document}

\maketitle

\begin{abstract}
Complementary families of polynomials are introduced to generate $C^m$ finite element basis functions of order $p \geq 2m+2$ for
arbitrary $m \ge 0$.  One family consists of the Hermite splines that serve as the nodal basis functions by ensuring $C^m$ continuity across element boundaries.  Explicit formulas for these splines for any $m \ge 0$ are presented on the canonical interval $[0,1]$.  The second family is derived on the interval $[-1,1]$ from derivatives of order $m+1$ of the Legendre polynomials of degree $p-m-1$ multiplied by binomial powers of degree $m+1$ at -1 and 1, respectively.  These polynomials, related to the ultraspherical polynomials, serve as the interior or bubble basis functions.  A relationship between the two families of polynomials is demonstrated.  For a particular $m$ and $p$, an interpolant is constructed using these basis pairs together with the roots of the related ultraspherical polynomial and the interval endpoints.  A formula for the interpolation error that extends the results for $m=0$ and $m=1$ is given.  To prove the formula extensions of the Lagrange interpolants are introduced.  A superconvergence result along with the related asymptotic equivalence of the interpolant and finite element solution is proved in the linear case in $H^{m+1}$. Computational results demonstrate the theory for a model problem. 
\end{abstract}

\section{Introduction}

Finite element, finite difference, and spectral methods have long been employed in solving partial differential equations in one, two, and three space dimensions. Less has been done as the order of the equations increases.  One exception to this is the application of Hermite splines of any order to solving two-point boundary problems of arbitrary even order \cite{BSV1968}.  The authors provide {\it a priori} error bounds on the finite element error but no computational results.

A dearth of models governed by higher-order equations may also explain the lack of interest.  There are several exceptions to this.  One involves blow-up solutions in one dimension for semilinear parabolic equations of order {\textit 2m+2} \cite{BGW2004,GMP2012,GNZ2020}.  A second example comes from extensions of the seminal paper by Mindlin of his theory of deformation of elastic solids \cite{M1965}.  Both sixth- \cite{KN2018,LGZ2017} and eighth- \cite{BMM2019} order equations have been derived from it to model the deformation of solids.  A third example, for which fourth- and sixth-order equations have been studied, is a thin film model; see \cite{L2010,BM2022} and references therein.  Some preliminary analysis for a tenth-order extension of the equation has been made \cite{A-CEG2013,A-CEG2015}. 

Given the complications of increasing order it is appropriate to begin by examining equations in one space dimension.  To that end consider two-point boundary value problems of the form 
\begin{align}
f(x)&=(p_{m+1}(x)u^{(m+1)}(x))^{(m+1)}+\sum_{k=1}^m(p_k(x,u(x), \ldots, u^{(k-1)}(x))u^{(k)}(x))^{(k)}+p_0(x,u(x)), \, m \ge 0, \label{eq:basicbvp} \\ 
p_{m+1}(x)& > 0, \, x \in \Omega \equiv [a,b], \label{eq:basicbvpdomain} \\
u_{a,k}&=u^{(k)}(a), \, u_{b,k}=u^{(k)}(b), \, k=0, 1, \ldots, m, \label{eq:basicbc}
\end{align}
\noindent
where the functions $f$, $p_k, \, k=0, 1, \ldots m+1$, and constants $u_{a,k}, \, u_{b,k}, \, k=0, 1, \ldots m$, are such that solutions $u(x) \in C^{\infty}(\Omega)$ exist and are unique.  The weak form of \eqref{eq:basicbvp}, with $u(x) \in H^{(m+1)}(\Omega)$, is given by
\begin{align}
\int_a^b&f(x)v(x)dx=\int_a^b(-1)^{m+1}u^{(m+1)}(x)v^{(m+1)}(x)dx \nonumber \\
&+\int_{a}^b\Bigg(\sum_{k=1}^m(-1)^kp_k(x,u(x), \ldots, u^{(k-1)}(x))u^{(k)}(x)v^{(k)}(x) 
+p_0(x,u(x))v(x)\Bigg)dx, \, \forall v(x) \in H_0^{m+1}(\Omega). 
\end{align}
\noindent
The finite element solution is obtained by approximating $H_0^{m+1}(\Omega)$ and $H^{m+1}(\Omega)$ by finite dimensional subspaces.  The choice of finite dimensional subspace determines the accuracy of the solution.  The results shown extend to time-dependent problems and to equations on tensor-product domains in two and three space dimensions.  

This approach has some advantages over the alternative that replaces the high-order equation with a coupled system of lower-order equations.  If the goal is to control the error in $H^{m+1}(\Omega)$ it is more natural to leave the equation in its original form.   The same is true regarding the distribution of the additional degrees of freedom if a basis of order higher than $2m+1$ is employed.  For example, if the equation is replaced by a coupled system of second order equations and the basis order remains $p > 2m+1$ across all equations then the system contains an additional $(p-2m-1)(m+1)$ unknowns per element.

One of the difficulties in solving \eqref{eq:basicbvp} directly using finite elements is the determination of nodal basis functions to preserve $C^m$ continuity across element boundaries.  Linear and cubic Hermite splines are an obvious and common choice for $m=0$ and $m=1$, respectively.   These splines have been extended from $m=2$ to $m=11$ \cite{AK2016,H2012,JCY2017,KK2023,SA2007} by deriving the coefficients from the continuity conditions.  A general formula for any order can be derived from \cite{S1960}.  An alternative approach that involves one polynomial per interval based at its midpoint which matches solutions and $m+1$ derivatives at the element boundaries is employed in \cite{GHL2005} in a staggered grid solver for hyperbolic equations.

The complementary interior basis functions are also presented along with their key properties. These interior basis functions are formed from the product of lower-order ultraspherical polynomials multiplied by binomial powers of degree $m+1$ associated with the element boundaries.  They turn out to be natural extensions of Jacobi polynomials. Finally, it is shown that the two families of polynomials are linked through the derivatives of the Hermite splines.  

The building blocks of Section 2 are used to construct the Hermite-Jacobi interpolant in Section 3.  This is first carried out on a single interval $\Delta=[M-h/2,M+h/2]$.  Interpolation is done by matching the underlying function and its first $m$ derivatives at $M-h/2$ and $M+h/2$ and at the scaled roots of the paired ultraspherical polynomial.  A formula for the error in the interpolant is derived and extended to errors in $L^2(\Delta)$ and $H^k(\Delta), \, k=1, 2, \ldots m+1$ norms.  In proving the error formula, generalizations of the Lagrange interpolants are introduced.  The Hermite-Jacobi interpolant is extended to a grid that discretizes the interval $\Omega=[a,b]$ along with the $L^2(\Omega)$ and $H^k(\Omega)$ errors.  Superconvergence and the asymptotic equivalence of the Hermite-Jacobi interpolant are proved for a linear equation in Section 4.  Computations for a model problem demonstrate the theory in Section 5 with conclusions and next steps presented in Section 6.  

\newtheorem{theorem}{Theorem}[section]
\newtheorem{lemma}[theorem]{Lemma}
\newtheorem{corollary}{Corollary}[theorem]

\section{Arbitrary-Order Hermite and Jacobi Polynomials: Their Basis Properties and Interrelationships}
Consider the two families of polynomials on the two canonical intervals $\Omega_0=[0,1]$ and $\Omega_1=[-1,1]$, respectively.  On $\Omega_0$ consider the pair of polynomials of order $2m+1$,
\begin{align}
{\cal H}_l^m(\xi)&=\frac{(1-\xi)^{m+1}\xi^l}{l!} \sum_{n=0}^{m-l} S_n^m \xi^n, \, \label{eq:hermitedef0} \\
{\cal I}_l^m(\xi)&=(-1)^l{\cal H}_l^m(1-\xi), \, l=0, 1, \ldots m, \, \xi \in \Omega_0, \label{eq:hermitedef1}
\end{align}
\noindent
where
\begin{equation}
S_n^m=\frac{(m+n)!}{m!n!} \label{eq:defsmn}.
\end{equation}
\noindent
These pairs of polynomials for each $m$ serve as the left and right nodal basis functions, respectively.  As will be shown below they ensure $C^{m+1}$ continuity across element boundaries. On $\Omega_1$ consider the polynomials of order $p$ with zeros of order $m+1$ at $-1$ and $1$,
\begin{equation}
{\cal J}_p^m(\mu)=J_{m,p}(\mu^2-1)^{m+1}P_{p-m-1}^{(m+1)}(\mu)\equiv(\mu^2-1)^{m+1}{\cal J}_{p-2m-2}^{-m-2}(\mu), \, p \ge 2m+2, \, m \ge -1, \, \mu \in \Omega_1, \label{eq:ultrasphericaldef}
\end{equation}
\noindent
where $P_{p-m-1}(\mu)$ is the Legendre polynomial of degree $p-m-1$ and
\begin{equation}
J_{m,p}=\frac{2^{p-m-2}(p-m-2)!(p-2m-2)!}{(2p-2m-3)!}, \, p \ge 2m+2, \label{eq:h2}
\end{equation}
\noindent
is chosen so that ${\cal J}_p^m(\mu)$ is monic. It also follows that 
\begin{equation}
{\cal J}_{p-2m-2}^{-m-2}(\mu) \equiv J_{m,p}P_{p-m-1}^{(m+1)}(\mu). \label{eq:JNormalize}
\end{equation}
\noindent
The polynomials ${\cal J}_p^m(\mu)$, $m \ge 0$ serve as the interior basis functions on an element.  As will be shown, their counterparts, ${\cal J}_{p-2m-2}^{-m-2}(\mu)$ turn out to be the ultraspherical polynomials ${\cal C}_{p-2m-2}^{(\alpha)}(\mu)$ with $2\alpha=2m+3$ in monic form.  While both ${\cal J}_p^m(\mu)$ and ${\cal J}_{p-2m-2}^{-m-2}(\mu)$ form orthogonal sets of polynomials, the latter is a complete set while the former is not except when $m=-1$ when they are equal and equal to Legendre polynomials in monic form.

The key properties of these two families are laid out in the next two theorems.  Their relationship is presented in the third theorem.

\begin{theorem} \label{continuity}
The ${\cal H}_l^m(\xi)$, ${\cal I}_l^m(\xi)$ satisfy the Hermite spline continuity conditions
\begin{equation} \label{eq:h1}
({\cal H}_l^m(1))^{(k)}=({\cal I}_l^m(0))^{(k)}=0, \, k=0, 1, \ldots, m,
\end{equation}
\begin{equation} \label{eq:h1a}
({\cal H}_l^m(0))^{(k)}=({\cal I}_l^m(1))^{(k)} = \left\{
\begin{array}{cc}
1 & k=l, \\
0 & k \neq l, \\
\end{array}
\right., \, k = 0,1, \ldots m.
\end{equation}
\end{theorem}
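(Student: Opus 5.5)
Because ${\cal I}_l^m(\xi)=(-1)^l{\cal H}_l^m(1-\xi)$, the chain rule gives $({\cal I}_l^m)^{(k)}(\xi)=(-1)^{l+k}({\cal H}_l^m)^{(k)}(1-\xi)$. The conditions on ${\cal I}_l^m$ at $0$ and $1$ therefore follow from those on ${\cal H}_l^m$ at $1$ and $0$. At $\xi=0$ with $k=l$ the sign is $(-1)^{2l}=1$, and whenever the value is zero the sign is irrelevant. So it suffices to prove \eqref{eq:h1} and \eqref{eq:h1a} for ${\cal H}_l^m$ alone.

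The conditions at $\xi=1$ are immediate. ${\cal H}_l^m$ contains the factor $(1-\xi)^{m+1}$, so it has a zero of order $m+1$ at $\xi=1$. Hence its derivatives of order $0,\dots,m$ vanish there.

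At $\xi=0$, the factor $\xi^l$ shows that ${\cal H}_l^m$ has a zero of order at least $l$. This gives $({\cal H}_l^m)^{(k)}(0)=0$ for $k<l$. For $l\le k\le m$ we have $({\cal H}_l^m)^{(k)}(0)=k!\,[\xi^k]{\cal H}_l^m$. Writing $k=l+j$ with $0\le j\le m-l$, the required statement becomes
\begin{equation*}
[\xi^j]\Big((1-\xi)^{m+1}\sum_{n=0}^{m-l}S_n^m\xi^n\Big)=\delta_{j0}, \qquad 0\le j\le m-l,
\end{equation*}
since the prefactor $k!/l!$ equals $1$ when $j=0$.

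This coefficient identity is the only real step. I would use the generating function $\sum_{n\ge0}S_n^m\xi^n=\sum_{n\ge 0}\binom{m+n}{n}\xi^n=(1-\xi)^{-(m+1)}$. The truncated sum $\sum_{n=0}^{m-l}S_n^m\xi^n$ agrees with $(1-\xi)^{-(m+1)}$ modulo $\xi^{m-l+1}$. Multiplying by $(1-\xi)^{m+1}$ therefore gives $1+O(\xi^{m-l+1})$, so the coefficients of $\xi^j$ for $0\le j\le m-l$ are exactly $\delta_{j0}$.

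Alternatively, one can argue directly with the finite convolution $\sum_{n=0}^{j}(-1)^{j-n}\binom{m+1}{j-n}\binom{m+n}{n}=\delta_{j0}$, which is a Vandermonde-type identity. I expect no genuine obstacle; the only care needed is the bookkeeping of indices and the chain-rule signs for ${\cal I}_l^m$.
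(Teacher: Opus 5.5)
Your proof is correct and complete, but it takes a different route from the paper. The paper does no computation of its own. It cites Spitzbart's generalized Hermite interpolation formula (his Theorem 1 with two nodes $x_0=0$, $x_1=1$ and multiplicities $r_0=r_1=m$). It recognizes ${\cal H}_l^m$ and ${\cal I}_l^m$ as that theorem's fundamental polynomials by computing $g_0^{(t)}(0)$ and $g_1^{(t)}(1)$, and inherits \eqref{eq:h1}--\eqref{eq:h1a} from the general result.

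You instead verify the conditions directly:
\begin{itemize}
\item the vanishing at $\xi=1$ comes from the factor $(1-\xi)^{m+1}$;
\item the vanishing at $\xi=0$ for $k<l$ comes from the factor $\xi^l$;
\item the cases $l\le k\le m$ follow because $\sum_{n=0}^{m-l}S_n^m\xi^n$ is the degree-$(m-l)$ Taylor polynomial of $(1-\xi)^{-(m+1)}$, so $(1-\xi)^{m+1}\sum_{n=0}^{m-l}S_n^m\xi^n=1+{\cal O}(\xi^{m-l+1})$;
\item the conditions for ${\cal I}_l^m$ then follow from the reflection $\xi\mapsto 1-\xi$ rather than from a second computation.
\end{itemize}

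Both arguments rest on the same fact: computing $g_0^{(t)}(0)$ in Spitzbart's framework means expanding $(\xi-1)^{-(m+1)}$ about $0$, whose Taylor coefficients are $(-1)^{m+1}S_n^m$. Yours is self-contained and shows why the truncation index $m-l$ is exactly right. The citation instead places the formula inside Hermite interpolation with arbitrary nodes and multiplicities, which explains where the formula comes from.

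One small wording slip: the sign check $(-1)^{l+k}=1$ for $k=l$ is needed at $\xi=1$ for ${\cal I}_l^m$ (where ${\cal H}_l^m$ is evaluated at $0$), not at $\xi=0$. The conclusion is unaffected.
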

\noindent
\begin{proof}
The proof follows directly from \cite{S1960} (see Theorem 1 with $n=1$, $x_0=0, \, x_1=1$, \, $r_0=r_1=m$) by computing $g_0^{(t)}(0)$ and $g_1^{(t)}(1)$.
\end{proof}

The structure of these Hermite splines is best illustrated by considering two examples of the ${\cal H}_l^m(\xi)$.
For the first example let $m=1$.  Then
\begin{equation}
{\cal H}_0^1(\xi)=(1-\xi)^2(1+2\xi), \, {\cal H}_1^1(\xi)=\xi(1-\xi)^2, \nonumber
\end{equation}
are the traditional cubic Hermite basis pair on [0,1] at the left end.
The second, with $m=5$, produces
\begin{align}
{\cal H}_0^{5}(\xi)=(1-\xi)^6(252\xi^5+126\xi^4+56\xi^3+21\xi^2+6\xi+1), \nonumber \\
{\cal H}_1^{5}(\xi)=\xi(1-\xi)^6(126\xi^4+56\xi^3+21\xi^2+6\xi+1), \nonumber \\
{\cal H}_2^{5}(\xi)=\xi^2(1-\xi)^6(56\xi^3+21\xi^2+6\xi+1)/2!, \nonumber \\
{\cal H}_3^{5}(\xi)=\xi^3(1-\xi)^6(21\xi^2+6\xi+1)/3!, \nonumber \\
{\cal H}_4^{5}(\xi)=\xi^4(1-\xi)^6(6\xi+1)/4!, \nonumber \\
{\cal H}_5^{5}(\xi)=\xi^5(1-\xi)^6/5!. \nonumber
\end{align}
\noindent
As $l$ increases one additional term is removed from the sum and the result is divided by $l$, that is,
\begin{equation}
{\cal H}_l^m(\xi)=({\cal H}_{l-1}^m(\xi)-S_{m-l+1}^m\xi^{m-l+1})\xi/l, \, l > 0. \label{eq:hermiterecursion}
\end{equation}
\noindent
In \cite{StoerBulirsch} (see section 2.1.5, p. 53) there is a similar recursion formula but in reverse, that is, starting with ${\cal H}_m^m(\xi)$ but unlike \eqref{eq:hermiterecursion} it involves the computation of a derivative of an auxiliary polynomial and thus is not explicit.

Having demonstrated the basis properties of the Hermite splines consider now the second family defined by \eqref{eq:ultrasphericaldef}-\eqref{eq:h2}.
\begin{theorem} \label{bubble}
The polynomials ${\cal J}_p^m(\mu)$ of \eqref{eq:ultrasphericaldef}-\eqref{eq:h2} and their counterparts ${\cal J}_p^{-m-2}(\mu)$ satisfy
\begin{align}
0&=({\cal J}_p^m(\pm 1))^{(k)}, \, k=0,1, \ldots, m, \, m \ge -1, \, p \ge 0, \label{eq:phi7a1} \\
&({\cal J}_{p-2m-1}^{-m-1}(\mu))^{\prime}=(p-2m-1){\cal J}_{p-2m-2}^{-m-2}(\mu), \, m \ge 0, \, p \ge 0, \label{eq:jderiv} \\
&{\cal J}_{p}^{m}(\mu)=p\int_{-1}^{\mu}{\cal J}_{p-1}^{m-1}(s)ds, \, m \ge 0, \, p \ge 2m+2, \label{eq:jint} \\
&{\cal J}_p^m(\mu)=
{\cal J}_p^{m-1}(\mu)-\frac{p(p-1)}{(2p-2m-1)(2p-2m-3)}{\cal J}_{p-2}^{m-1}(\mu), \, m \ge 0, \, p \ge 2m+2, \label{eq:phi4} \\
&{\cal J}_{p}^{-m-2}(\mu)=\frac{(2\alpha-1)!p!2^{p-1}((2p+2\alpha-3)/2)!}{(2p+2\alpha-2)!((2\alpha-1)/2)!}{\cal C}_{p}^{(\alpha)}, m \ge -1, \, p \ge 0, \, 2\alpha=2m+3,\label{eq:ultrasphericalequiv} \\
&{\cal J}_p^m(\mu)=\mu{\cal J}_{p-1}^m(\mu)-\frac{(p-1)(p-2m-3)}{(2p-2m-3)(2p-2m-5)}{\cal J}_{p-2}^m(\mu), \, {\cal J}_0^m(\mu)=1, \, {\cal J}_1^m(\mu)=\mu, \nonumber \\ 
m&=0, \pm 1, \pm 2, \ldots, \, p \ge 0, \label{eq:threetermrecurrence} \\
&\int_{-1}^1 \frac{{\cal J}_p^m(\mu){\cal J}_q^m(\mu)}{(1-\mu^2)^{m+1}}d\mu=\begin{cases}
0 & p \ne q, \, p,q \ge 2m+2, \, m \ge -1,\\
\frac{2^{2p-2m-3}((p-m-2)!)^2(p-2m-2)!p!}{(2p-2m-1)((2p-2m-3)!)^2} & p = q, \, p \ge 2m+2, \, m \ge -1. \label{eq:jortho}
\end{cases}
\end{align}
\end{theorem}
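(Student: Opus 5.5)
The plan is to reduce every identity to standard facts about Legendre and Gegenbauer polynomials via the defining relation \eqref{eq:JNormalize}, ${\cal J}_{p-2m-2}^{-m-2}=J_{m,p}P_{p-m-1}^{(m+1)}$. With $n=p-2m-2$ and $\alpha=m+3/2$, this says ${\cal J}_n^{-m-2}$ is a constant multiple of $P_{n+m+1}^{(m+1)}$. I will use the classical identity $P_{N}^{(k)}=(2k-1)!!\,{\cal C}_{N-k}^{(k+1/2)}$ with $N=n+m+1$ and $k=m+1$. This gives \eqref{eq:ultrasphericalequiv} at once, up to matching leading coefficients. The leading coefficient of ${\cal C}_n^{(\alpha)}$ is $2^n(\alpha)_n/n!$, and ${\cal J}$ is monic, so the stated constant is the reciprocal of that coefficient. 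Converting the Pochhammer symbol $(\alpha)_n$ with half-integer $\alpha$ into the factorial/half-factorial form shown is routine. I would first check that monicity of \eqref{eq:ultrasphericaldef} is consistent with \eqref{eq:h2}. The leading coefficient of $P_N$ is $(2N)!/(2^N(N!)^2)$; differentiating $m+1$ times and inverting gives $J_{m,p}$.

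Next come the differential identities. For \eqref{eq:jderiv}, the family ${\cal J}_{n}^{-m-1}$ is a multiple of $P^{(m)}$ of the same Legendre index. Differentiating once more gives a multiple of $P^{(m+1)}$, which is a multiple of ${\cal J}_{n-1}^{-m-2}$. Both sides are then monic multiples after dividing by the degree, so the constant is forced to be $p-2m-1$ by comparing leading terms. For \eqref{eq:phi7a1}, the factor $(\mu^2-1)^{m+1}$ yields zeros of order $m+1$ at $\pm1$ whenever $p\ge 2m+2$. The case $m=-1$ is vacuous. For \eqref{eq:jint}, I will differentiate ${\cal J}_p^m$ using the Rodrigues-type structure. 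Here $(\mu^2-1)^{m+1}P_N^{(m+1)}$ is, by Rodrigues' formula and the Gegenbauer weight relation, proportional to $\frac{d^{m}}{d\mu^{m}}$-type expressions. Concretely, I will use the Gegenbauer identity
\begin{equation*}
\frac{d}{d\mu}\left[(1-\mu^2)^{\alpha+1/2}{\cal C}_n^{(\alpha+1)}\right]=c\,(1-\mu^2)^{\alpha-1/2}{\cal C}_{n+1}^{(\alpha)},
\end{equation*}
the adjoint of the derivative formula. This shows $({\cal J}_p^m)'\propto{\cal J}_{p-1}^{m-1}$. Leading coefficients give the constant $p$. Both sides vanish at $-1$ by \eqref{eq:phi7a1}, so integration gives \eqref{eq:jint}.

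For \eqref{eq:threetermrecurrence}, I will use the monic Gegenbauer three-term recurrence $x\hat C_{n}=\hat C_{n+1}+\frac{n(n+2\alpha-1)}{4(n+\alpha)(n+\alpha-1)}\hat C_{n-1}$. Writing ${\cal J}_p^m=(\mu^2-1)^{m+1}{\cal J}_{p-2m-2}^{-m-2}$, the factor $(\mu^2-1)^{m+1}$ passes through multiplication by $\mu$. So the recurrence for ${\cal J}_p^m$ is the shifted recurrence of the monic ultraspherical polynomials. Substituting $n=p-2m-3$ and $\alpha=m+3/2$ should reproduce $\frac{(p-1)(p-2m-3)}{(2p-2m-3)(2p-2m-5)}$; this is where the formula must be checked carefully. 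Indeed $n(n+2\alpha-1)=(p-2m-3)(p-1)$ and $4(n+\alpha)(n+\alpha-1)=(2p-2m-3)(2p-2m-5)$, which match. For negative upper indices and small $p$, where the definition via the product is degenerate, I would take \eqref{eq:threetermrecurrence} as the defining extension and verify consistency with the initial values.

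For \eqref{eq:phi4}, I will write ${\cal J}_p^m-{\cal J}_p^{m-1}=(\mu^2-1)^m\bigl[(\mu^2-1)\hat C^{(\alpha+1)}_{p-2m-2}-\hat C^{(\alpha)}_{p-2m}\bigr]$. I then invoke the standard connection formula expressing $(1-x^2)C_{n}^{(\alpha+1)}$ as a combination of $C_{n+2}^{(\alpha)}$ and $C_n^{(\alpha)}$, which follows from the identity $C_{n}^{(\alpha)}\propto C_n^{(\alpha+1)}-C_{n-2}^{(\alpha+1)}$ and its adjoint. The coefficient follows by matching, and this bookkeeping is the main obstacle. I would first try to obtain it instead by integrating \eqref{eq:threetermrecurrence} for $m-1$ via \eqref{eq:jint}; failing that, I would compare the coefficients of $\mu^{p-2}$. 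Finally, for \eqref{eq:jortho}, I will substitute the product form so that the weight becomes $(1-\mu^2)^{m+1}$, which is exactly the Gegenbauer weight $(1-\mu^2)^{\alpha-1/2}$. Orthogonality is then classical. The norm is the known $h_n=\pi 2^{1-2\alpha}\Gamma(n+2\alpha)/(n!(n+\alpha)\Gamma(\alpha)^2)$ divided by the square of the leading coefficient, taking the sign $(-1)^{m+1}$ into account. Simplifying the Gamma functions at half-integers with the duplication formula yields the stated factorial expression.
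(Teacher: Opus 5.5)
Apart from one step, your plan is correct and more direct than the paper's. The paper derives \eqref{eq:phi4}, \eqref{eq:ultrasphericalequiv} and the $m\ge 0$ recurrence from Legendre identities, through successive integration and differentiation inductions on $m$. You instead use $P_N^{(k)}=(2k-1)!!\,{\cal C}^{(k+1/2)}_{N-k}$, the connection formula for $(1-\mu^2){\cal C}^{(\alpha+1)}_n$ with a $\mu^{p-2}$ comparison, and the Gegenbauer recurrence with $\alpha=m+\tfrac32$, $n=p-2m-3$. The constants all check out. The gap is the one part of \eqref{eq:threetermrecurrence} with real content for $m\ge 0$: the initial values ${\cal J}_0^m=1$, ${\cal J}_1^m=\mu$.

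You also place the degenerate range wrongly. For $m\le -1$ nothing is degenerate: ${\cal J}^m_p$ is the monic ultraspherical polynomial with $\alpha=-m-\tfrac12$ for every $p\ge 0$, and the stated recurrence is just its recurrence with $n=p-1$. The degenerate range is $m\ge 0$, $0\le p\le 2m+1$. There the recurrence started from $1,\mu$ \emph{defines} ${\cal J}^m_2,\dots,{\cal J}^m_{2m+1}$. What must be proved is that this sequence reaches exactly $(\mu^2-1)^{m+1}$ at $p=2m+2$, so that it joins the product-defined one.

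Saying you would verify consistency is not an argument for this. Nor can it be done by running the recurrence downward from the product-defined values. The coefficient vanishes at $p=2m+3$, so ${\cal J}^m_{2m+3}=\mu{\cal J}^m_{2m+2}$ says nothing about ${\cal J}^m_{2m+1}$. At $p=2m+2$ you have the single relation $(\mu^2-1)^{m+1}=\mu{\cal J}^m_{2m+1}+\frac{1}{2m-1}{\cal J}^m_{2m}$ for two unknown polynomials. The paper's own downward coefficient-matching at this step runs into the same underdetermination, so you cannot simply borrow it. This range is also why \eqref{eq:phi7a1} must be read for $p\ge 2m+2$ only, as you did, since ${\cal J}^m_0=1$.

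A forward argument closes the gap. The stated coefficient is the monic Gegenbauer coefficient $\frac{n(n+2\alpha-1)}{4(n+\alpha)(n+\alpha-1)}$ at $n=p-1$, $\alpha=-m-\tfrac12$, and it has no pole there. The explicit monic expansion
\[
\bar{\cal C}_n^{(\alpha)}(\mu)=\sum_{k=0}^{\lfloor n/2\rfloor}\frac{(-1)^k\,n!}{k!\,(n-2k)!\,4^k\prod_{j=1}^{k}(n+\alpha-j)}\,\mu^{n-2k}
\]
satisfies that recurrence as an identity of rational functions in $\alpha$. Its denominators are also nonzero at $\alpha=-m-\tfrac12$, so the sequence generated from $1,\mu$ is this expansion evaluated there.

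At $n=2m+2$ one has
\[
4^k\prod_{j=1}^{k}(m+\tfrac32-j)=2^k(2m+1)(2m-1)\cdots(2m+3-2k)
\]
and
\[
\frac{(2m+2)!}{(2m+2-2k)!}=2^k\frac{(m+1)!}{(m+1-k)!}(2m+1)(2m-1)\cdots(2m+3-2k).
\]
So the coefficient of $\mu^{2m+2-2k}$ is $(-1)^k\binom{m+1}{k}$, and ${\cal J}^m_{2m+2}=(\mu^2-1)^{m+1}$. The vanishing coefficient then gives ${\cal J}^m_{2m+3}=\mu(\mu^2-1)^{m+1}$. From there your $p\ge 2m+4$ argument propagates the agreement.
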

Two lemmas are needed before proceeding to the proof of the theorem.
\begin{lemma}
The Legendre polynomials satisfy
\begin{align}
0&=(1-\mu^2)P_p^{(m+2)}(\mu)-2(m+1)\mu P_{p}^{(m+1)}(\mu)+(p-m)(p+m+1)P_p^{(m)}(\mu), \label{eq:legendreexta}
\end{align}
\end{lemma}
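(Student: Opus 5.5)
The plan is to differentiate the Legendre differential equation $m$ times. The starting point is the classical equation satisfied by $P_p$,
\begin{equation*}
(1-\mu^2)P_p^{\prime\prime}(\mu)-2\mu P_p^{\prime}(\mu)+p(p+1)P_p(\mu)=0,
\end{equation*}
which is exactly the case $m=0$ of \eqref{eq:legendreexta}. I would prove the general case by induction on $m\ge 0$. An alternative is to apply Leibniz's rule directly to the $m$-th derivative of the displayed equation, which gives the same result in one step.

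For the inductive step, assume \eqref{eq:legendreexta} holds for some $m$ and differentiate it once in $\mu$. Each term contributes as follows.
\begin{itemize}
\item The first term gives $(1-\mu^2)P_p^{(m+3)}-2\mu P_p^{(m+2)}$.
\item The second term gives $-2(m+1)P_p^{(m+1)}-2(m+1)\mu P_p^{(m+2)}$.
\item The third term gives $(p-m)(p+m+1)P_p^{(m+1)}$.
\end{itemize}
Next, collect coefficients. The coefficient of $\mu P_p^{(m+2)}$ is $-2-2(m+1)=-2(m+2)$. The coefficient of $P_p^{(m+1)}$ is
\begin{equation*}
(p-m)(p+m+1)-2(m+1)=p^2+p-m^2-3m-2=(p-m-1)(p+m+2).
\end{equation*}
This is exactly \eqref{eq:legendreexta} with $m$ replaced by $m+1$, which closes the induction.

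The only step needing care is the factorization of the $P_p^{(m+1)}$ coefficient. It can be checked by expanding $(p-m-1)(p+m+2)=p^2+p-(m+1)(m+2)$. With Leibniz's rule the corresponding check is that $p(p+1)-2m-m(m-1)=(p-m)(p+m+1)$, where the term $-m(m-1)$ comes from $\binom{m}{2}(-2)$ and $-2m$ collects the contributions $m(-2\mu)'$ and $-2\mu$ differentiated. I expect no real obstacle. The identity holds trivially, with both sides zero, once $m>p$, so no restriction on $p$ is needed.
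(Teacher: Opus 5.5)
Your proposal is correct and matches the paper's proof: the base case is Legendre's equation, and the inductive step differentiates once and uses $p^2+p-m(m+1)-2(m+1)=(p-m-1)(p+m+2)$. The Leibniz-rule variant you mention is a valid one-step alternative, but it is not needed.
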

\begin{proof}
From \cite{Rainville} (see equation 5 section 89)
\begin{equation}
(1-\mu^2)P_p^{\prime \prime}(\mu)-2\mu P_p^{\prime}(\mu)+p(p+1)P_p(\mu)=0,
\end{equation}
\noindent
and thus \eqref{eq:legendreexta} holds when $m=0$. By induction assume true for $m=0, 1, \ldots, {\bar M}$.  Then differentiating \eqref{eq:legendreexta} when $m={\bar M}$ implies that
\begin{align}
0&=(1-\mu^2)P_p^{({\bar M}+3)}(\mu)-2\mu P_p^{({\bar M}+2)}(\mu)-2({\bar M}+1)\mu P_p^{({\bar M}+2)}(\mu)-2({\bar M}+1)P_p^{({\bar M}+1)}(\mu) \nonumber \\
&+(p-{\bar M})(p+{\bar M}+1)P_p^{({\bar M}+1)}(\mu) \nonumber \\
&=(1-\mu^2)P_p^{({\bar M}+3)}(\mu)-2({\bar M}+2)\mu P_p^{({\bar M}+2)}(\mu)+(p^2+p-{\bar M}({\bar M}+1)-2({\bar M}+1))P_p^{({\bar M}+1)}(\mu) \nonumber \\
&=(1-\mu^2)P_p^{({\bar M}+3)}(\mu)-2({\bar M}+2)\mu P_p^{({\bar M}+2)}(\mu)+(p^2+p-({\bar M}+1)({\bar M}+2))P_p^{({\bar M}+1)}(\mu) \nonumber \\
&=(1-\mu^2)P_p^{({\bar M}+3)}(\mu)-2({\bar M}+2)\mu P_p^{({\bar M}+2)}(\mu)+(p-({\bar M}+1))(p+{\bar M}+2))P_p^{({\bar M}+1)}(\mu),
\label{eq:legendreextlemma1}
\end{align}
\noindent
thereby establishing \eqref{eq:legendreexta} when $m={\bar M}+1$.
\end{proof}
\begin{lemma}
The $J_{m,p}$ satisfy
\begin{equation}
J_{m-1,p-1}=(p-2m-1)J_{m,p}. \label{eq:jext}
\end{equation}
\end{lemma}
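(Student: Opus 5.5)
Direct substitution into the definition \eqref{eq:h2} should settle this. Replacing $(m,p)$ by $(m-1,p-1)$ gives
\begin{equation*}
J_{m-1,p-1}=\frac{2^{(p-1)-(m-1)-2}\,((p-1)-(m-1)-2)!\,((p-1)-2(m-1)-2)!}{(2(p-1)-2(m-1)-3)!}.
\end{equation*}
The three shifted quantities simplify as follows:
\begin{itemize}
\item $(p-1)-(m-1)-2=p-m-2$;
\item $(p-1)-2(m-1)-2=p-2m-1$;
\item $2(p-1)-2(m-1)-3=2p-2m-3$.
\end{itemize}
Hence
\begin{equation*}
J_{m-1,p-1}=\frac{2^{p-m-2}(p-m-2)!\,(p-2m-1)!}{(2p-2m-3)!}.
\end{equation*}

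I then compare this with $J_{m,p}=2^{p-m-2}(p-m-2)!\,(p-2m-2)!/(2p-2m-3)!$. The power of two, the factor $(p-m-2)!$ and the denominator $(2p-2m-3)!$ all coincide. The only difference is $(p-2m-1)!$ in place of $(p-2m-2)!$. Writing $(p-2m-1)!=(p-2m-1)(p-2m-2)!$ then gives \eqref{eq:jext} at once.

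The one point needing care is the range of validity. The shifted constant requires $p-1\ge 2(m-1)+2$, i.e.\ $p\ge 2m+1$, which holds whenever $p\ge 2m+2$. At $p=2m+2$ we have $(p-2m-2)!=0!=1$, so the factorial identity is still legitimate. In short, this is the identity $n!=n(n-1)!$ with $n=p-2m-1\ge1$.

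This ratio is exactly the factor $(p-2m-1)$ that appears in \eqref{eq:jderiv}, which is presumably where the lemma is used. There is no real obstacle beyond this index bookkeeping.
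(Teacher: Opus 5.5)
Your proposal is correct and is essentially the paper's proof: the paper also substitutes $(m-1,p-1)$ into \eqref{eq:h2} and observes that the ratio $J_{m-1,p-1}/J_{m,p}$ reduces to $(p-2m-1)!/(p-2m-2)!=p-2m-1$. Your explicit check of the index range ($p\ge 2m+2$, so that $n=p-2m-1\ge 1$) is a small piece of care that the paper omits.
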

\begin{proof}
From \eqref{eq:h2}
\begin{equation}
\frac{J_{m-1,p-1}}{J_{m,p}}=\frac{\frac{2^{p-m-2}(p-m-2)!(p-2m-1)!}{(2p-2m-3)!}}{\frac{2^{p-m-2}(p-m-2)!(p-2m-2)!}{(2p-2m-3)!}}=p-2m-1.
\end{equation}
\end{proof}

\noindent
\textit{Proof (of Theorem \ref{bubble})}.  
Equation \eqref{eq:phi7a1} follows directly by differentiating \eqref{eq:h2}.  
From \eqref{eq:ultrasphericaldef}-\eqref{eq:h2} it follows that
\begin{equation}
J_{m-1,p-1}P_{p-m-1}^{(m)}(\mu)={\cal J}_{p-2m-1}^{-m-1}(\mu). \label{eq:h2a}
\end{equation}
\noindent
Differentiating \eqref{eq:h2a} and using \eqref{eq:ultrasphericaldef} and \eqref{eq:jext}
\noindent
leads to \eqref{eq:jderiv}.  To establish \eqref{eq:jint}, equation \eqref{eq:ultrasphericaldef} implies that
\begin{align}
({\cal J}_{p}^{m}(\mu))^{\prime}&=J_{m,p}(\mu^2-1)^{m+1}P_{p-m-1}^{(m+2)}(\mu)+J_{m,p}2\mu(\mu^2-1)^{m}(m+1)P_{p-m-1}^{(m+1)}(\mu) \nonumber \\
&=-J_{m,p}(\mu^2-1)^{m}((1-\mu^2)P_{p-m-1}^{(m+2)}(\mu)-2\mu(m+1)P_{p-m-1}^{(m+1)}(\mu)) \nonumber \\
&=J_{m,p}(\mu^2-1)(p)(p-2m-1)P_{p-m-1}^{(m)}(\mu) \nonumber \\
&=p{\cal J}_{p-1}^{m-1}(\mu), \label{eq:jintprime}
\end{align}
\noindent
where \eqref{eq:legendreexta} and \eqref{eq:jext} have been used. Equation \eqref{eq:jint} follows by integrating \eqref{eq:jintprime} together with \eqref{eq:phi7a1}.  From \cite{Rainville} (see equation 5, section 87)
\begin{equation}
(2p+1)P_p(\mu)=P_{p+1}^{\prime}(\mu)-P_{p-1}^{\prime}(\mu). \label{eq:legderiva}
\end{equation}
\noindent
Integrating \eqref{eq:legderiva} over $[-1,\mu]$ and using \eqref{eq:ultrasphericaldef} and \eqref{eq:jint} results in
\begin{equation}
\int_{-1}^{\mu}{\cal J}_p^{-1}(\rho)d\rho=\frac{1}{p+1}{\cal J}_{p+1}^{-1}(\mu)-\frac{p}{(2p+1)(2p-1)}{\cal J}_{p-1}^{-1}(\mu),
\end{equation}
\noindent
or
\begin{equation}
{\cal J}_{p+1}^0(\mu)={\cal J}_{p+1}^{-1}(\mu)-\frac{p(p+1)}{(2p+1)(2p-1)}{\cal J}_{p-1}^{-1}(\mu). \label{eq:legderivb}
\end{equation}
\noindent
Replacing $p+1$ with $p$ in \eqref{eq:legderivb} implies that \eqref{eq:phi4} holds when $m=0$.  By induction assume \eqref{eq:phi4} is true for $m=0, 1, \ldots, M$.  Integrating \eqref{eq:phi4} with $m=M$ yields
\begin{equation}
\int_{-1}^{\mu}{\cal J}_p^{\bar M}(\rho)d\rho=\int_{-1}^{\mu}{\cal J}_{p}^{{\bar M}-1}(\rho)d\rho-\frac{p(p-1)}{(2p-2{\bar M}-1)(2p-2{\bar M}-3)}\int_{-1}^{\mu}{\cal J}_{p-2}^{{\bar M}-1}(\rho)d\rho, \, p \ge 2{\bar M}+2,
\end{equation}
\noindent
or using \eqref{eq:jint}
\begin{equation}
\frac{1}{p+1}{\cal J}_{p+1}^{{\bar M}+1}(\mu)=\frac{1}{p+1}{\cal J}_{p+1}^{\bar M}(\mu)-\frac{p}{(2p-2{\bar M}-1)(2p-2{\bar M}-3)}{\cal J}_{p-1}^{\bar M}(\mu), \, p \ge 2{\bar M}+3. \label{eq:legderivc}
\end{equation}
Replacing $p+1$ by $p$ in \eqref{eq:legderivc} yields \eqref{eq:phi4} with $m={\bar M}+1$.  If ${\bar C}_p^{(\alpha)}(\mu)$ is the monic form of the ultraspherical polynomial then
\begin{equation}
{\bar {\cal C}}_p^{(\alpha)}(\mu)=\frac{(2\alpha-1)!p!2^{p-1}((2p+2\alpha-3)/2)!}{(2\alpha+2p-2)!((2\alpha-1)/2)!}{\cal C}_p^{(\alpha)}(\mu). \label{eq:monicultra}
\end{equation}
\noindent
follows from \cite{AS1972} (see equation 22.3.4 p. 775) along with the three-term recurrence relation
\begin{align}
{\bar {\cal C}}_{p+1}^{(\alpha)}(\mu)&=\mu{\bar {\cal C}}_p^{(\alpha)}(\mu)-\frac{p(p+2m+2)}{(2p+2m+3)(2p+2m+1)}{\bar {\cal C}}_{p-1}^{(\alpha)}(\mu), \, p \ge 1, \label{eq:ultrarecurrence} \\
{\bar {\cal C}}_0^{(\alpha)}&=1, \, {\bar {\cal C}}_1^{(\alpha)}=\mu, \label{eq:ultrarecurrenceinitial}
\end{align}
\noindent
\cite{AS1972} (see equation 22.7.3 p. 782).  Differentiating the three-term recurrence formula for Legendre polynomials in \cite{Rainville} (see equation 2 section 89) yields
\begin{equation}
pP_p^{\prime}(\mu)=(2p-1)P_{p-1}(\mu)+(2p-1)\mu P_{p-1}^{\prime}(\mu)-(p-1)P_{p-2}^{\prime}(\mu). \label{eq:legendrerecurrence}
\end{equation}
\noindent
Applying \eqref{eq:JNormalize} and \eqref{eq:jderiv} with $m=0$ to \eqref{eq:legendrerecurrence} leads to
\begin{equation}
\frac{p}{J_{0,p+1}}{\cal J}_{p-1}^{-2}(\mu)=(2p-1)P_{p-1}(\mu)+\mu \frac{2p-1}{J_{0,p}}{\cal J}_{p-2}^{-2}(\mu)-\frac{p-1}{J_{0,p-1}}{\cal J}_{p-3}^{-2}(\mu). \label{eq:jreducerecurrence}
\end{equation}
\noindent
From \cite{Rainville} (equation 6 section 87), \eqref{eq:jderiv}, and \eqref{eq:JNormalize} it is also true that
\begin{equation}
P_{p-1}(\mu)=\frac{1}{p}\left( \frac{1}{J_{0,p+1}}{\cal J}_{p-1}^{-2}(\mu)-\mu\frac{1}{J_{0,p}}{\cal J}_{p-2}^{-2}(\mu)\right). \label{eq:diffrecurrence}
\end{equation}
Substituting \eqref{eq:diffrecurrence} into \eqref{eq:jreducerecurrence} and using \eqref{eq:h2} gives
\begin{equation}
{\cal J}_{p+1}^{-2}(\mu)=\mu{\cal J}_{p}^{-2}(\mu)-\frac{p(p+2)}{(2p+3)(2p+1)}{\cal J}_{p-1}^{-2}(\mu),
\end{equation}
\noindent
which is the same as \eqref{eq:ultrarecurrence} with $m=0$ or $\alpha=3/2$.  Since \eqref{eq:h2} and \eqref{eq:JNormalize} imply that ${\cal J}_0^{-2}(\mu)=1$ and ${\cal J}_1^{-2}(\mu)=\mu$ it follows that \eqref{eq:ultrasphericalequiv} holds for $m=0$.  From \cite{OT2013}
\begin{equation}
{\bar {\cal C}}_p^{(\alpha) \prime}(\mu)=p{\bar {\cal C}}_{p-1}^{(\alpha+1)}. \label{eq:ultramonicderiv}
\end{equation}
\noindent
Then \eqref{eq:jderiv}, \eqref{eq:ultrasphericalequiv} with $m=0$, and \eqref{eq:ultramonicderiv} imply that \eqref{eq:ultrasphericalequiv} holds for $m > 0$ by successive differentiation.  This implies that \eqref{eq:threetermrecurrence} holds for $m \le -1$.  To complete the proof of \eqref{eq:threetermrecurrence} when $m > -1$ first note that from \eqref{eq:ultrasphericaldef} that the first two interior basis functions are
\begin{equation}
{\cal J}_{2m+2}^m(\mu)=(\mu^2-1)^{m+1}, \, {\cal J}_{2m+3}^m(\mu)=\mu(\mu^2-1)^{m+1}. \label{eq:firsttwointerior}
\end{equation}
\noindent
Integrating both sides of the three-term recurrence formula for Legendre polynomials in \cite{Rainville} (see equation 2, section 89), applying integration parts to the first term on the right, and using \eqref{eq:jint} and \eqref{eq:phi7a1} yields
\begin{align}
&p\int_{-1}^{\mu}P_p(\rho)d\rho=\frac{(2p-1)(2p-3)!}{2^{p-2}p!(p-2)!}\mu {\cal J}_p^0(\mu)-\frac{(2p-1)(2p-3)!}{2^{p-2}p!(p-2)!}\int_{-1}^{\mu}{\cal J}_p^0(\rho)d\rho \nonumber \\
&-(p-1)\int_{-1}^{\mu}P_{p-2}(\rho)d\rho, \, p \ge 2. \label{eq:phi1b}
\end{align}
\noindent
Applying \eqref{eq:phi4} with $m=0$ to the second integral in \eqref{eq:phi1b} results in
\begin{align}
&p\int_{-1}^{\mu}P_p(\rho)d\rho=\frac{(2p-1)(2p-3)!}{2^{p-2}p!((p-2)!}\mu {\cal J}_p^0(\mu) \nonumber \\
&-\frac{(2p-1)(2p-3)!}{2^{p-2}p!((p-2)!}\int_{-1}^{\mu}\left( {\cal J}_p^{-1}(\rho)-\frac{p(p-1)}{(2p-1)(2p-3)}{\cal J}_{p-2}^{-1}(\rho)\right)d\rho-(p-1)\int_{-1}^{\mu}P_{p-2}(\rho)d\rho, \, p \ge 2. \label{eq:phi1f}
\end{align}
\noindent
Combining like terms in \eqref{eq:phi1f} and applying \eqref{eq:jint} to the first and last terms together with \eqref{eq:ultrasphericaldef}-\eqref{eq:h2} and $m=-1$ yields
\begin{equation}
\frac{(2p-3)!(2p-1)}{2^{p-2}p!(p-2)!}{\cal J}_{p+1}^0(\mu)=\frac{(2p-1)(2p-3)!}{2^{p-2}p!((p-2)!}\mu{\cal J}_p^0(\mu)-\frac{(p-2)(2p-5)!}{2^{p-3}(p-1)!(p-3)!}{\cal J}_{p-1}^0(\mu), \, p \ge 3, \label{eq:phi1d}
\end{equation}
\noindent
where $p$ is now greater than 3 since ${\cal J}_2^0$ is the lowest degree interior polynomial that satisfies \eqref{eq:phi7a1}.  Dividing both sides of \eqref{eq:phi1d} by the coefficient of the first term gives
\begin{equation}
{\cal J}_{p+1}^0(\mu)=\mu{\cal J}_p^0(\mu)-\frac{2^{p-2}p!(p-2)!(p-2)(2p-5)!}{(2p-3)!(2p-1)2^{p-3}(p-1)!(p-3)!}{\cal J}_{p-1}^0(\mu), \, p \ge 3,
\end{equation}
\noindent
which simplifies to
\begin{equation}
{\cal J}_{p+1}^0(\mu)=\mu{\cal J}_p^0(\mu)-\frac{p(p-2)}{(2p-1)(2p-3)}{\cal J}_{p-1}^0(\mu), \, p \ge 3. \label{eq:phi1e}
\end{equation}
\noindent
Equation \eqref{eq:phi1e} is true for $p=2$ since in that case the second term on the right-hand side is zero and since \eqref{eq:firsttwointerior} holds.  When $p=1$, \eqref{eq:phi1e}, using \eqref{eq:firsttwointerior} can be written as
\begin{equation}
\mu{\cal J}_1^0(\mu)-{\cal J}_0^0(\mu)={\cal J}_2^0(\mu)\equiv(\mu^2-1),
\end{equation}
\noindent
which is satisfied if ${\cal J}_1^0(\mu)=\mu$ and ${\cal J}_0^0(\mu)=1$.  Thus, \eqref{eq:threetermrecurrence} holds for $m=0$.  By induction assume true for $m=0, 1, \ldots, {\hat M}$.  Integrating \eqref{eq:threetermrecurrence} and using integration by parts on the second term together with \eqref{eq:jint} results in
\begin{align}
&\frac{1}{p+1}{\cal J}_{p+1}^{{\hat M}+1}(\mu)=\frac{1}{p}\mu{\cal J}_p^{{\hat M}+1}(\mu)-\frac{1}{p}\int_{-1}^{\mu}{\cal J}_p^{{\hat M}+1}(\rho)d\rho
-\frac{(p-2{\bar M}-3)}{(2p-2{\bar M}-3)(2p-2{\bar M}-5)}{\cal J}_{p-1}^{{\hat M}+1}(\mu), \nonumber \\
&p \ge 2{\bar M}+5. \label{eq:phi1g}
\end{align}
\noindent
Substituting from \eqref{eq:phi4} for the integral in \eqref{eq:phi1g}, combining like terms, and finally applying \eqref{eq:jint} to the first and last terms yields
\begin{equation}
\left(1+\frac{1}{p} \right) \frac{1}{p+1}{\cal J}_{p+1}^{{\hat M}+1}(\mu)=\frac{1}{p}\mu{\cal J}_p^{{\hat M}+1}(\mu)-\frac{(p-2{\bar M}-4)}{(2p-2{\bar M}-3)(2p-2{\bar M}-5)}{\cal J}_{p-1}^{{\hat M}+1}(\mu),
\end{equation}
\noindent
or
\begin{equation}
{\cal J}_{p+1}^{{\hat M}+1}(\mu)=\mu{\cal J}_p^{{\hat M}+1}(\mu)-\frac{p(p-2{\bar M}-4)}{(2p-2{\bar M}-3)(2p-2{\bar M}-5)}{\cal J}_{p-1}^{{\hat M}+1}(\mu), \, p \ge 2{\bar M}+5,
\label{eq:phi1h}
\end{equation}
\noindent
which confirms the three-term recurrence relation for ${\hat M}+1$.  That \eqref{eq:phi1h} holds for $p=2{\bar M}+4$ follows from \eqref{eq:firsttwointerior} since the second term on the right-hand side is zero.  To show that ${\cal J}_1^{{\bar M}+1}(\mu)=\mu$ and ${\cal J}_0^{{\bar M}+1}(\mu)=1$ rewrite \eqref{eq:phi1h} with $p=2{\bar M}+3$ as
\begin{equation}
\mu{\cal J}_{2{\bar M}+3}^{{\bar M}+1}(\mu)+\frac{1}{2{\bar M}+1}{\cal J}_{2{\bar M}+2}(\mu)={\cal J}_{2{\bar M}+4}^{{\bar M}+1}(\mu)\equiv(\mu^2-1)^{{\bar M}+2}, \label{eq:phi1i}
\end{equation}
\noindent
where \eqref{eq:firsttwointerior} has been used.  Assume 
\begin{equation}
{\cal J}_p^{{\bar M}+1}(\mu)=\sum_{k=0}^p j_{p,k}^{{\bar M}+1}\mu^k, \, p \le 2{\bar M}+3. \label{eq:jacobiextn}
\end{equation}
\noindent
Plugging \eqref{eq:jacobiextn} into \eqref{eq:phi1i} and equating like powers of $\mu$ on both sides yields 
\begin{equation}
j_{p,p}^{{\bar M}+1}=1, \, j_{p,p-1}^{{\bar M}+1}=0. \label{eq:phi1j}
\end{equation}
\noindent
Substituting $p-1$ for $p$ in \eqref{eq:jacobiextn} and using \eqref{eq:phi1h} along with \eqref{eq:phi1j} implies that $j_{p-1,p-1}^{{\bar M}+1}=1$ and $j_{p-1,p-2}^{{\bar M}+1}=0$.  Induction on $p$ implies that ${\cal J}_1^{{\bar M}+1}(\mu)=\mu$.  Reversing directions then implies that ${\cal J}_p^{{\bar M}+1}(\mu)$ is odd or even as $p$ is odd or even, respectively.  From \eqref{eq:phi1i} it follows that $j_{p-1,0}^{{\bar M}+1}=(2{\bar M}+1)(-1)^{{\bar M}+2}$.  Continuing to move down in order and using the odd/even order of the ${\cal J}_p^{{\bar M}+1}(\mu)$ leads to
\begin{equation}
j_{2,0}^{{\bar M}+1}=(-1)^{{\bar M}+2}\frac{1\cdot(-1)\cdot(-3) \cdots (-2{\bar M}+1)}{3\cdot5\cdots(2{\bar M}-1)(2{\bar M}+1)}=\frac{1}{2{\bar M}+1},
\end{equation}
\noindent
which in turn implies that ${\cal J}_0^{{\bar M}+1}(\mu)=1$ using \eqref{eq:phi1h} with $p=2$.  Thus, \eqref{eq:threetermrecurrence} is established for all $m$.  To prove that the ${\cal J}_p^m(\mu)$ are orthogonal with respect to the weight function $(1-\mu^2)^{-(m+1)}$ from \eqref{eq:ultrasphericaldef} it follows that
\begin{equation}
\int_{-1}^1 \frac{{\cal J}_p^m(\rho){\cal J}_q^m(\rho)}{(1-\rho^2)^{m+1}}d\rho=\int_{-1}^1{\cal J}_{p-2m-2}^{-m-2}(\rho){\cal J}_{q-2m-2}^{-m-2}(\rho)(1-\rho^2)^{m+1}d\rho. \label{eq:jorthoa}
\end{equation}
\noindent
Substituting into \eqref{eq:jorthoa} from \eqref{eq:ultrasphericalequiv} yields
\begin{align}
\int_{-1}^1 \frac{{\cal J}_p^m(\rho){\cal J}_q^m(\rho)}{(1-\rho^2)^{m+1}}d\rho&=\frac{((2m+2)!)^2(p-2m-2)!(q-2m-2)!2^{p+q-4m-6}(p-m-2)!(q-m-2)!}{(2p-2m-3)!(2q-2m-3)!((m+1)!)^2} \nonumber \\
&\times \int_{-1}^1C_{p-2m-2}^{(\alpha)}C_{q-2m-2}^{(\alpha)}(1-\rho^2)^{\alpha-1/2}d\rho. \label{eq:orthob}
\end{align}
\noindent
This establishes \eqref{eq:jortho} in the case $p \ne q$ from the orthogonality of the ultraspherical polynomials.  Since from \cite{AS1972} (see equation 22.2.3 p. 774)
\begin{align}
\int_{-1}^1(C_{p-2m-2}^{(\alpha)}(\rho))^2(1-\rho^2)^{\alpha-1/2}d\rho&=\frac{\pi2^{1-2\alpha}\Gamma(p-2m-2+2\alpha)}{(p-2m-2)!(p-2m-2+\alpha)(\Gamma(\alpha))^2} \nonumber \\
&=\frac{2^{2m+1}p!(m!)^2}{(p-2m-2)!(2p-2m-1)((2m+1)!)^2},
\end{align}
\noindent
equation \eqref{eq:orthob} simplifies to
\begin{align}
\int_{-1}^1 \frac{({\cal J}_p^m(\rho))^2}{(1-\rho^2)^{m+1}}d\rho&=\frac{2^{2m+1}p!(m!)^2}{(p-2m-2)!(2p-2m-1)((2m+1)!)^2} \nonumber \\
&\times \frac{((2m+2)!)^2((p-2m-2)!)^22^{2p-4m-6}((p-m-2)!)^2}{((2p-2m-3)!)^2(m+1)!},
\end{align}
\noindent
which further simplifies to \eqref{eq:jortho}.
\qed

\noindent
Remark: The polynomials ${\cal J}_p^m(\mu)$ are natural extensions of the Jacobi polynomials $P_p^{(-m-1,-m-1)}(\mu)$, $m \ge 0$, and are therefore also extensions of the ultraspherical polynomials $C_p^{(\alpha)}, \, m \le -2,$ based on extending the three-term recurrence relation including the same two starting polynomials.

The Hermite splines and the polynomials ${\cal J}_p^m(\mu)$ defined by \eqref{eq:ultrasphericaldef}-\eqref{eq:h2} are connected as demonstrated in the next theorem.
\begin{theorem}
\begin{align}
({\cal H}_l^m(\xi))^{\prime}&=\begin{cases}
\frac{(-1)^{m+1}}{2^{2m}}\frac{(2m+1)!}{(m!)^2} {\cal J}_{2m}^{m-1}(2\xi-1) & l=0, \\
\frac{(-1)^{m+1}}{2^{2m}}\frac{(2m-l)!(2m+1)}{l!m!(m-l)!}{\cal J}_{2m}^{m-1}(2\xi-1)+{\cal H}_{l-1}^{m-1}(\xi) & l =1, 2, \ldots , m, \, m > 1. \label{eq:derivhermite}
\end{cases}
\end{align}
\end{theorem}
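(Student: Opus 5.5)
The plan is to identify both sides as polynomials of degree at most $2m$ that agree up to a multiple of $\xi^m(1-\xi)^m$, and then fix that multiple by comparing leading coefficients.

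First I simplify the right-hand side. With $p=2m$ and index $m-1$ we are at the lowest interior function, so \eqref{eq:firsttwointerior} (with $m$ replaced by $m-1$) gives ${\cal J}_{2m}^{m-1}(\mu)=(\mu^2-1)^m$. Substituting $\mu=2\xi-1$ gives $\mu^2-1=4\xi(\xi-1)$, hence
\begin{equation*}
\frac{(-1)^{m+1}}{2^{2m}}{\cal J}_{2m}^{m-1}(2\xi-1)=-\xi^m(1-\xi)^m .
\end{equation*}
The statement is therefore equivalent to
\begin{equation*}
({\cal H}_l^m(\xi))^{\prime}-{\cal H}_{l-1}^{m-1}(\xi)=-\frac{(2m+1)(2m-l)!}{l!\,m!\,(m-l)!}\,\xi^m(1-\xi)^m ,
\end{equation*}
where for $l=0$ the term ${\cal H}_{-1}^{m-1}$ is read as $0$. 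For $l=0$ the constant reduces to $(2m+1)!/(m!)^2$, as in the first case of \eqref{eq:derivhermite}.

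Second, I set $D_l(\xi)=({\cal H}_l^m)^{\prime}-{\cal H}_{l-1}^{m-1}$. This has degree at most $2m$, since ${\cal H}_l^m$ has degree $2m+1$ and ${\cal H}_{l-1}^{m-1}$ has degree $2m-1$. I show it vanishes to order $m$ at both endpoints, using Theorem \ref{continuity} twice, once at level $m$ and once at level $m-1$.
\begin{itemize}
\item At $\xi=1$: by \eqref{eq:h1}, ${\cal H}_l^m$ has a zero of order $m+1$, so its derivative has a zero of order $m$. Likewise ${\cal H}_{l-1}^{m-1}$ has a zero of order $m$.
\item At $\xi=0$: for $k=0,\ldots,m-1$, \eqref{eq:h1a} gives $({\cal H}_l^m)^{(k+1)}(0)=\delta_{k+1,l}$ and $({\cal H}_{l-1}^{m-1})^{(k)}(0)=\delta_{k,l-1}$. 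These coincide, so $D_l^{(k)}(0)=0$ for $k\le m-1$. When $l=0$, the first is $\delta_{k+1,0}=0$, consistent with the convention ${\cal H}_{-1}^{m-1}=0$.
\end{itemize}
Hence $D_l(\xi)=c_l\,\xi^m(1-\xi)^m$ for some constant $c_l$.

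Third, I determine $c_l$ from the coefficient of $\xi^{2m}$. Since ${\cal H}_{l-1}^{m-1}$ has degree only $2m-1$, that coefficient comes entirely from $({\cal H}_l^m)^{\prime}$. From \eqref{eq:hermitedef0}, the leading coefficient of ${\cal H}_l^m$ is $(-1)^{m+1}S_{m-l}^m/l!$, so that of its derivative is $(2m+1)(-1)^{m+1}S_{m-l}^m/l!$. On the other side, the leading coefficient of $c_l\,\xi^m(1-\xi)^m$ is $(-1)^m c_l$. Equating the two and using \eqref{eq:defsmn} gives
\begin{equation*}
c_l=-(2m+1)\frac{(2m-l)!}{l!\,m!\,(m-l)!},
\end{equation*}
which is exactly the claimed constant.

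As a cross-check for $l=0$, integrating over $[0,1]$ gives $\int_0^1({\cal H}_0^m)^{\prime}\,d\xi={\cal H}_0^m(1)-{\cal H}_0^m(0)=-1$. The Beta integral $\int_0^1\xi^m(1-\xi)^m\,d\xi=(m!)^2/(2m+1)!$ then yields the same value of $c_0$.

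The only step needing care is the endpoint bookkeeping at $\xi=0$: the index shift $l\to l-1$ together with $m\to m-1$ must match the Kronecker deltas exactly, and the case $l=0$ must be handled separately. The remaining steps are routine.
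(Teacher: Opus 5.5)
Your proof is correct, and it takes a genuinely different route from the paper's. The paper argues by brute force. It differentiates the explicit sum \eqref{eq:hermitedef0}, re-indexes, and checks coefficient by coefficient that everything cancels except a multiple of $\xi^m(1-\xi)^m$; for $l\ge 1$ there is also a residual sum that it recognizes as ${\cal H}_{l-1}^{m-1}$. Only at the end does it convert $\xi^m(1-\xi)^m$ into ${\cal J}_{2m}^{m-1}(2\xi-1)$ via \eqref{eq:ultrasphericaldef}.

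You instead use the interpolation conditions of Theorem \ref{continuity} at levels $m$ and $m-1$ to see that $D_l$ has $m$-fold zeros at both endpoints. A degree count then forces $D_l=c_l\,\xi^m(1-\xi)^m$, and you read off $c_l$ from the $\xi^{2m}$ coefficient, which needs only the leading coefficient of ${\cal H}_l^m$. This buys you three things:
\begin{itemize}
\item It avoids all the index bookkeeping. The paper's last line for $l=0$ even carries a stray $(2m-1)!$ where $(2m+1)!$ is meant, whereas your constant comes out right.
\item It explains why the lowest bubble appears at all: $\xi^m(1-\xi)^m$ spans the polynomials of degree at most $2m$ annihilated by the level-$(m-1)$ endpoint conditions.
\item It covers $m=1$ in the second case as well, not just $m>1$.
\end{itemize}

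What the paper's computation buys in exchange is self-containedness. It works directly from the explicit formula, while yours rests on Theorem \ref{continuity}, which the paper only cites from Spitzbart. To remove that dependence, note that $\sum_{n=0}^{m-l}S_n^m\xi^n$ is the degree-$(m-l)$ Taylor polynomial of $(1-\xi)^{-(m+1)}$. Hence ${\cal H}_l^m(\xi)=\xi^l/l!+O(\xi^{m+1})$, which gives \eqref{eq:h1a} immediately; the conditions at $\xi=1$ are obvious from the factor $(1-\xi)^{m+1}$.
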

\begin{proof}
To prove \eqref{eq:derivhermite} consider first the case $l=0$.  Differentiating \eqref{eq:hermitedef0} together with \eqref{eq:defsmn} produces
\begin{align}
({\cal H}_0^m(\xi))^{\prime}&=-(m+1)(1-\xi)^m\sum_{n=0}^m\frac{(m+n)!}{m!n!}\xi^n+(1-\xi)^{m+1}\sum_{n=1}^m\frac{(m+n)!}{m!n!}n\xi^{n-1} \nonumber \\
&=(1-\xi)^m \left(-\sum_{n=0}^m\frac{(m+1)(m+n)!}{m!n!}\xi^n+\sum_{n=1}^m\frac{(m+n)!n}{m!n!}\xi^{n-1}-\sum_{n=1}^m\frac{(m+n)!n}{m!n!}\xi^n \right) \nonumber \\
&=(1-\xi)^m \left(-\sum_{n=0}^m\frac{(m+1)(m+n)!}{m!n!}\xi^n+\sum_{n=0}^{m-1}\frac{(m+n+1)!}{m!n!}\xi^{n}-\sum_{n=1}^m\frac{(m+n)!n}{m!n!}\xi^n \right) \nonumber \\
&=(1-\xi)^m\left(\sum_{n=1}^{m-1}\xi^n\frac{1}{m!n!}(-(m+1)(m+n)!+(m+n+1)!-n(m+n)!) \right. \nonumber \\
&\left.-\frac{(m+1)m!}{m!}+\frac{(m+1)!}{m!}-\frac{(m+1)(2m)!}{(m!)^2}\xi^m-\frac{(2m)!m}{(m!)^2}\xi^m\right) \nonumber \\
&=-\frac{(2m+1)!}{(m!)^2}(1-\xi)^m\xi^m \nonumber \\
&=\frac{(-1)^{m+1}(2m-1)!}{2^{2m}(m!)^2}{\cal J}_{2m}^{m-1}(2\xi-1),
\end{align}
\noindent
where \eqref{eq:ultrasphericaldef} has been employed.  For $l > 0$
\begin{align}
 ({\cal H}_l^m(\xi))^{\prime}&=\frac{1}{l!}\left(-(m+1)(1-\xi)^m\sum_{n=0}^{m-l}\frac{(m+n)!}{m!n!}\xi^{n+l}+(1-\xi)^{m+1}\sum_{n=0}^{m-l}\frac{(m+n)!}{m!n!}(n+l)\xi^{n+l-1} \right) \nonumber \\
 &=\frac{(1-\xi)^m}{m!l!}\left( -\sum_{n=0}^{m-l}\frac{(m+n)!}{n!}(m+1+n+l)\xi^{n+l}+\sum_{n=0}^{m-l}\frac{(m+n)!}{n!}(n+l)\xi^{n+l-1} \right) \nonumber \\
 &=\frac{(1-\xi)^m}{m!l!}\left( \sum_{n=1}^{m-l}\frac{(m+n-1)!}{(n-1)!}(-(m+n+l)n+(n+l)(m+n))\xi^{n+l-1}\right. \nonumber \\
 & \left.-\frac{(2m-l)!}{(m-l)!}(2m+1)\xi^{m}+m!l\xi^{l-1}\right) \nonumber \\
 &=\frac{(1-\xi)^m}{m!l!}\sum_{n=1}^{m-l}\frac{(m+n-1)!}{n!}ml\xi^{n+l-1}-\frac{(1-\xi)^m}{m!l!}\frac{(2m-l)!(2m+1)}{(m-l)!}\xi^m+\frac{(1-\xi)^m}{(l-1)!}\xi^{l-1} \nonumber \\
 &=\frac{(1-\xi)^m}{m!l!}\sum_{n=0}^{m-l}\frac{(m+n-1)!}{n!}ml\xi^{n+l-1}-\frac{(1-\xi)^m}{m!l!}\frac{(2m-l)!(2m+1)}{(m-l)!}\xi^m \nonumber \\
 &={\cal H}_{l-1}^{m-1}(\xi)+(-1)^{m+1}\frac{(2m-l)!(2m+1)}{2^{2m}m!l!(m-l)!}{\cal J}_{2m}^{m-1}(2\xi-1),
\end{align}
\noindent
where \eqref{eq:hermitedef0}, \eqref{eq:defsmn}-\eqref{eq:ultrasphericaldef} have been used.
\end{proof}

\section{The Hermite-Jacobi Interpolant}
The Hermite-Jacobi interpolant (the latter name is chosen since the ${\cal J}_p^m(\mu)$ are extensions of Jacobi polynomials) is constructed from the two families of polynomials introduced in the previous section.  The Hermite splines, \eqref{eq:hermitedef0}-\eqref{eq:defsmn}, serve as the basis functions at the nodes, while the subfamily of the extended Jacobi polynomials, \eqref{eq:ultrasphericaldef}-\eqref{eq:h2}, are the basis functions in the interior.  Consider the interval $\Delta\equiv[M-h/2,M+h/2]$ and let $u(x) \in C^{\infty}({\bar \Delta})$. The approximation space $S^{p,\Delta}$ consists of polynomials of degree $p \ge 2m+1$.  To that end define
\begin{align}
\Phi_{i,L,\Delta}^m(x)&={\cal H}_i^m((x-(M-h/2))/h), \, i=0, \cdots , m, \label{eq:leftspline} \\
\Phi_{i,R,\Delta}^m(x)&={\cal I}_i^m((x-(M-h/2))/h), \, i=0, \cdots , m, \label{eq:rightspline} \\
\Upsilon_{i,\Delta}^m(x)&=\left(\frac{h}{2}\right)^{i}{\cal J}_i^m(2(x-M)/h), \, i=2m+2, 2m+3, \ldots, p, \label{eq:defUpsilon} \\
\Upsilon_{p+1,\Delta}^m(x)&=\left(\frac{h}{2}\right)^{p+1}{\cal J}_{p+1}^m(2(x-M)/h)\equiv\sum_{i=0}^{p+1}{\hat a}_{p+1-2i}^{p+1,m}(x-M)^{p+1-2i}, \, {\hat a}_{p+1}^{p+1,m}=1, \label{eq:defpsi} \\
\Upsilon_{p-2m-1,\Delta}^{-m-2}(x)&=\left(\frac{h}{2}\right)^{p-2m-1}{\cal J}_{p-2m-1}^{-m-2}(2(x-M)/h), \label{eq:defpsia}
\end{align}
\noindent
where from \eqref{eq:ultrasphericaldef}
\begin{equation}
\Upsilon_{p+1,\Delta}^m(x)=(x-M+h/2)^{m+1}(x-M-h/2)^{m+1}\Upsilon_{p-2m-1,\Delta}^{-m-2}(x), \label{eq:upsilonp1a}
\end{equation}
\noindent
and the $\Upsilon_{i,\Delta}^m(x)$, $i=2m+2, 2m+3, \ldots, p+1$ are monic.  Further, let
\begin{align} 
W_{p,\Delta}^m(x)&=\begin{cases}
\sum_{i=0}^m(W_{i,L}\Phi_{i,L,\Delta}^m(x)+W_{i,R}\Phi_{i,R,\Delta}^m(x)) & p=2m+1, \\
\sum_{i=0}^m(W_{i,L}\Phi_{i,L,\Delta}^m(x)+W_{i,R}\Phi_{i,R,\Delta}^m(x))+\sum_{i=2m+2}^p W_{i,M}\Upsilon_{i,\Delta}^m(x) & p \ge 2m+2,
\end{cases} \label{eq:interpolant}
\end{align}
be an interpolant on $\Delta$ such that
\begin{align}
W_{i,L}&=u^{(i)}(M-h/2), \, i=0, 1, \ldots m,   \label{eq:left} \\
W_{i,R}&=u^{(i)}(M+h/2), \, i=0, 1, \ldots m, \, \label{eq:right}
\end{align}
 and, if $p \ge 2m+2$, the $W_{i,M}$ are chosen so that
\begin{equation}
W_{p,\Delta}^m(M_i)=u(M_i), \, i=2, 3, \ldots, p-2m, \label{eq:middle}
\end{equation}
with $M_i=M+h\mu_i/2$ where $\mu_i$ are the $p-2m-1$ roots of $\Upsilon_{p-2m-1,\Delta}^{-m-2}(\mu)$, $i=2, 3, \ldots, p-2m$ and $M_1=M-h/2$ and $M_{p-2m+1}=M+h/2$. 
\noindent
Additionally, let $\pi^{\Delta}$ be the linear operator that projects functions in $C^{\infty}({\bar \Delta})$ onto the space of polynomials of degree $p$ on ${\bar \Delta}$ according to \eqref{eq:interpolant}-\eqref{eq:middle} and let
\begin{align}
v(x)=&\sum_{i=0}^{p+s+1} \frac{(x-M)^i}{i!}u^{(i)}(M), \label{eq:vdefinition} \\
{\hat L}_{i,\Delta}^{m,p}(x)=&\frac{(x-M_1)^{m+1}}{(M_i-M_1)^{m+1}}\frac{(x-M_{p-2m+1})^{m+1}}{M_i-M_{p-2m+1})^{m+1}}\prod_{j=2,j \neq i}^{p-2m}\frac{(x-M_j)}{(M_i-M_j)}, i=2, 3, \ldots, p-2m, \label{eq:lhatdefinition} \\
{\hat A}_{l,\Delta}^{m,p}(x)=&\frac{\sum_{j=0}^{m-l}A_{j,l}(x-M_1)^{j+l}}{l!}\frac{(x-M_{p-2m+1})^{m+1}\prod_{j=2}^{p-2m}(x-M_j)}{(M_1-M_{p-2m+1})^{m+1}\prod_{j=2}^{p-2m}(M_1-M_j)}, \label{eq:ahatdefinition} \\
{\hat B}_{l,\Delta}^{m,p}(x)=&\frac{\sum_{j=0}^{m-l}B_{j,l}(x-M_{p-2m+1})^{j+l}}{l!}\frac{(x-M_1)^{m+1}\prod_{j=2}^{p-2m}(x-M_j)}{(M_{p-2m+1}-M_1)^{m+1}\prod_{j=2}^{p-2m}(M_{p-2m+1}-M_j)}, \label{eq:bhatdefinition} \\
l&=0, 1, \ldots, m,
\end{align}
where the $A_{j,l}$ and $B_{j,l}$ are determined by requiring that
\begin{align}
({\hat A}_l^{m,p}(M_1))^{(k)} &=
\begin{cases}
1 & l=k, \\
0 & l \neq k, \, k=0, 1, \ldots, m,
\end{cases}  \label{eq:Arequirements} \\
({\hat B}_l^{m,p}(M_{p-2m+1}))^{(k)}&=
\begin{cases}
1 & l=k, \\
0 & l \neq k, \, k=0, 1, \ldots, m.
\end{cases} \label{eq:Brequirements}
\end{align}
\noindent
Remark: The number of conditions \eqref{eq:Arequirements}-\eqref{eq:Brequirements} is larger than the number of coefficients $A_{j,l}$ and $B_{j,l}$, respectively.  That unique values of the coefficients can be found while simultaneously meeting the conditions is demonstrated below.

\begin{theorem}
\label{interpolation}
If \eqref{eq:leftspline}-\eqref{eq:Brequirements} hold and if $u(x) \in C^{\infty}({\bar \Delta})$ then $\forall x \in (M-h/2,M+h/2) \, \exists \, {\hat x} \in \Delta$ s.t. for any $s \in \mathbb{N}_0$
\begin{align}
u(x)-&W_{p,\Delta}^m(x)=\Upsilon_{p+1,\Delta}^m(x)\sum_{i=0}^s \frac{u^{(p+i+1)}(M)}{(p+i+1)!}\Psi_{i,\Delta}^m(x) \nonumber \\
&+\frac{u^{(p+s+2)}(M)}{(p+s+2)!}({\hat x}-M)^{p+s+2}+\sum_{i=2}^{p-2m}(v(M_i)-u(M_i)) {\hat L}_{i,\Delta}^{m,p}(x) \nonumber \\
&+\sum_{l=0}^{m}(v^{(l)}(M_1)-u^{(l)}(M_1)){\hat A}_{l,\Delta}^{m,p}(x)+\sum_{l=0}^{m}(v^{(l)}(M_{p-2m+1})-u^{(l)}(M_{p-2m+1})){\hat B}_{l,\Delta}^{m,p}(x), \label{eq:theorem1}
\end{align}
where
\begin{align}
\Psi_{i,\Delta}^m(x)=&\frac{(x-M)^{p+i+1}-\pi^{\Delta}(x-M)^{p+i+1}}{\Upsilon_{p+1,\Delta}^m(x)} \label{eq:psidefinitiona} \\
=&\sum_{l=0}^{\lfloor i/2 \rfloor}a_{i-2l}^{i,m}(x-M)^{i-2l}, i=1, 2, \ldots, s, \label{eq:psidefinitionb} \\
a_i^{i,m}&=1, \, a_{i-2l}^{i,m}=-\sum_{j=1}^{min(l,\lfloor (p+1)/2 \rfloor )}a_{i-2(l-j)}^{i,m}{\hat a}_{p+1-2j}^{p+1}, l=1, 2, \ldots , \lfloor i/2 \rfloor . \label{eq:psidefinitionc} 
\end{align}
\end{theorem}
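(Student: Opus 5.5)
The plan is to split $u$ into its Taylor polynomial of degree $p+s+1$ about $M$ and a remainder, and to use two facts: $\pi^{\Delta}$ is linear, and $\pi^{\Delta}$ reproduces every polynomial of degree $\le p$. With $v$ as in \eqref{eq:vdefinition}, write $u=v+(u-v)$, so that
\begin{equation*}
u-W_{p,\Delta}^m=u-\pi^{\Delta}u=(v-\pi^{\Delta}v)+(u-v)+\pi^{\Delta}(v-u).
\end{equation*}
Each of the three pieces produces one group of terms in \eqref{eq:theorem1}. The middle piece $u-v$ is the Taylor remainder of order $p+s+2$. It gives the term containing ${\hat x}$, with ${\hat x}$ supplied by the Lagrange form of the remainder.

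\textbf{Step 1 (well-posedness of $\pi^{\Delta}$).} I first check that the interpolation problem \eqref{eq:left}--\eqref{eq:middle} is unisolvent in the polynomials of degree $\le p$. There are $2(m+1)+(p-2m-1)=p+1$ conditions. Suppose a polynomial $q$ of degree $\le p$ satisfies the homogeneous conditions. Then $q$ vanishes to order $m+1$ at $M\pm h/2$ and vanishes at the $p-2m-1$ interior nodes $M_i$, $i=2,\ldots,p-2m$. By \eqref{eq:upsilonp1a}, these are exactly the zeros of $\Upsilon_{p+1,\Delta}^m$ counted with multiplicity. The interior nodes are simple and lie inside $\Delta$, because they are scaled zeros of an ultraspherical polynomial (Theorem \ref{bubble}, \eqref{eq:ultrasphericalequiv}). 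Hence $\Upsilon_{p+1,\Delta}^m$, which has degree $p+1$, divides $q$, so $q\equiv 0$. It follows that $\pi^{\Delta}$ is well defined and reproduces polynomials of degree $\le p$.

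\textbf{Step 2 (the polynomial part $v-\pi^{\Delta}v$).} The terms of $v$ of degree $\le p$ are reproduced by $\pi^{\Delta}$ and cancel. What remains is
\begin{equation*}
\sum_{i=0}^{s}\frac{u^{(p+i+1)}(M)}{(p+i+1)!}\Bigl((x-M)^{p+i+1}-\pi^{\Delta}(x-M)^{p+i+1}\Bigr).
\end{equation*}
For each $i$, the bracketed difference is a polynomial of degree $p+i+1$ satisfying the homogeneous interpolation conditions. By the same multiplicity count as in Step 1, it is divisible by $\Upsilon_{p+1,\Delta}^m$. The quotient $\Psi_{i,\Delta}^m$ of \eqref{eq:psidefinitiona} is therefore a monic polynomial of degree $i$.

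Since ${\cal J}_{p+1}^m$ has the parity of $p+1$ (from \eqref{eq:threetermrecurrence}), $\Upsilon_{p+1,\Delta}^m$ contains only the powers $(x-M)^{p+1-2j}$, as in \eqref{eq:defpsi}. Consequently the quotient contains only the powers $(x-M)^{i-2l}$. Long division then determines its coefficients: match the coefficient of $(x-M)^{p+1+i-2l}$ in $\Upsilon_{p+1,\Delta}^m\Psi_{i,\Delta}^m$ with that of $(x-M)^{p+i+1}$, for $l=1,\ldots,\lfloor i/2\rfloor$. The terms of degree $\le p$ are absorbed by $\pi^{\Delta}(x-M)^{p+i+1}$ and impose no constraint. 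This gives exactly the recursion \eqref{eq:psidefinitionc}, with the upper limit $\min(l,\lfloor (p+1)/2\rfloor)$ reflecting the number of nonzero ${\hat a}$'s.

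\textbf{Step 3 (cardinal basis for $\pi^{\Delta}(v-u)$).} I will show that ${\hat L}_{i,\Delta}^{m,p}$, ${\hat A}_{l,\Delta}^{m,p}$ and ${\hat B}_{l,\Delta}^{m,p}$ form the cardinal basis for the data in \eqref{eq:left}--\eqref{eq:middle}. Once this holds, uniqueness from Step 1 gives
\begin{equation*}
\pi^{\Delta}g=\sum_{i} g(M_i){\hat L}_{i,\Delta}^{m,p}+\sum_{l} g^{(l)}(M_1){\hat A}_{l,\Delta}^{m,p}+\sum_{l} g^{(l)}(M_{p-2m+1}){\hat B}_{l,\Delta}^{m,p}
\end{equation*}
for every smooth $g$. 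Applying this with $g=v-u$ produces the last three sums in \eqref{eq:theorem1}.

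\begin{itemize}
\item \emph{Degrees.} Each of these functions has degree $p$: for ${\hat A}_{l,\Delta}^{m,p}$ this is $m+(m+1)+(p-2m-1)=p$, and the same count holds for ${\hat L}$ and ${\hat B}$.
\item \emph{${\hat L}$.} By construction, ${\hat L}_{i,\Delta}^{m,p}$ vanishes to order $m+1$ at both endpoints and at $M_j$ for $j\ne i$, and equals $1$ at $M_i$.
\item \emph{${\hat A}$ at the nodes other than $M_1$.} ${\hat A}_{l,\Delta}^{m,p}$ vanishes to order $m+1$ at $M_{p-2m+1}$ and at every interior node, so the only remaining question is \eqref{eq:Arequirements} at $M_1$.
\item \emph{${\hat A}$ at $M_1$.} The factor $(x-M_1)^l$ makes the derivatives of order $k<l$ vanish automatically at $M_1$. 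The conditions for $k=l,\ldots,m$ are $m-l+1$ equations in the $m-l+1$ unknowns $A_{0,l},\ldots,A_{m-l,l}$. The system is lower triangular: the $k$-th derivative at $M_1$ involves only $A_{j,l}$ with $j\le k-l$, and the diagonal entries are nonzero multiples of the nonvanishing value $\bigl((M_1-M_{p-2m+1})^{m+1}\prod_{j=2}^{p-2m}(M_1-M_j)\bigr)/\ldots=1$ of the normalized remaining factor. So the coefficients exist uniquely, which resolves the Remark.
\item \emph{${\hat B}$.} The same argument applies at $M_{p-2m+1}$.
\end{itemize}

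I expect the main obstacles to be the divisibility and parity bookkeeping in Step 2 and the triangular solvability in Step 3. Once these are in place, assembling the three pieces gives \eqref{eq:theorem1}.
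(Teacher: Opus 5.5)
Your decomposition $u-W_{p,\Delta}^m=(v-\pi^{\Delta}v)+(u-v)+\pi^{\Delta}(v-u)$ is the paper's argument, and so are the next two steps. These are the divisibility of $(x-M)^{p+i+1}-\pi^{\Delta}(x-M)^{p+i+1}$ by $\Upsilon_{p+1,\Delta}^m$ (Lemma \ref{upsilonp1lemma}) and the expansion of $\pi^{\Delta}(v-u)$ in the ${\hat L},{\hat A},{\hat B}$ basis (Lemma \ref{basis}). Two of your additions fill in what the paper leaves implicit or defers to the $m=0$ reference:
\begin{itemize}
\item Step 1 (distinct interior nodes plus the multiplicity count gives unisolvence).
\item The observation that $\Psi_{i,\Delta}^m$ is the Euclidean quotient of $(x-M)^{p+i+1}$ by $\Upsilon_{p+1,\Delta}^m$. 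That long division does reproduce \eqref{eq:psidefinitionc}, parity included.
\end{itemize}

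The one step that does not deliver what you say is the Taylor term. The Lagrange remainder gives $u(x)-v(x)=\frac{u^{(p+s+2)}(\hat x)}{(p+s+2)!}(x-M)^{p+s+2}$, with $\hat x$ inside the derivative. It does not give $\frac{u^{(p+s+2)}(M)}{(p+s+2)!}(\hat x-M)^{p+s+2}$ as written in \eqref{eq:theorem1}.

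That written form is false in general. Take $u(x)=(x-M)^{p+s+3}$. Then $v\equiv 0$ and $u^{(k)}(M)=0$ for all $k\le p+s+2$. So the first sum and the remainder term in \eqref{eq:theorem1} vanish, and the last three sums equal $\pi^{\Delta}(v-u)=-W_{p,\Delta}^m$. The identity would then read $u(x)-W_{p,\Delta}^m(x)=-W_{p,\Delta}^m(x)$, which fails for every $x\neq M$.

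The paper's proof asserts the same form from ``Taylor's Theorem'' without justification, so the defect is in the statement, not in your route. What your Steps 1--3 plus Taylor's theorem actually prove is \eqref{eq:theorem1} with the remainder $\frac{u^{(p+s+2)}(\hat x)}{(p+s+2)!}(x-M)^{p+s+2}$, $\hat x$ between $M$ and $x$. You should state the result in that form rather than claim the Lagrange remainder yields the printed one.
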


\noindent
Remark: If $m=0$ the ${\hat L}_{i,\Delta}^{m,p}(x)$, ${\hat A}_{l,\Delta}^{m,p}(x)$, and ${\hat B}_{l,\Delta}^{m,p}(x)$ are the Lagrange interpolants.

Before proceeding to the proof of the theorem, four lemmas are proved that establish that the ${\hat L}_{i,\Delta}^{p,m}(x)$, ${\hat A}_{l,\Delta}^{p,m}(x)$, ${\hat B}_{l,\Delta}^{p,m}(x)$ form a basis for $S^{p,\Delta}$.  Thus, they constitute generalizations of the Lagrange interpolating polynomials.  One further lemma demonstrates a key property of $\Upsilon_{p+1,\Delta}^m(x)$.
\begin{lemma} \label{lhatlemma}
The ${\hat L}_{i,\Delta}^{m,p}(x)$, $i=2, 3, \ldots, p-2m$, of \eqref{eq:lhatdefinition} satisfy
\begin{align}
{\hat L}_{i,\Delta}^{m,p}(M_j)&=\begin{cases}
0 & j=2, 3, \ldots, p-2m, \, j \ne i, \\
1 & j=i,
\end{cases} \label{eq:lhat1}
\\
({\hat L}_{i,\Delta}^{m,p})^{(k)}(M_j)&=0, \, j=1,p-2m+1, \, k=1, 2, \ldots, m. \label{eq:lhat2}
\end{align}
\end{lemma}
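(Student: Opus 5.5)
The plan is a direct check from the product form \eqref{eq:lhatdefinition}. Write
\[
{\hat L}_{i,\Delta}^{m,p}(x)=c_i\,(x-M_1)^{m+1}(x-M_{p-2m+1})^{m+1}\prod_{j=2,j\neq i}^{p-2m}(x-M_j),
\]
where $c_i$ is the reciprocal of the same product evaluated at $x=M_i$. First confirm that $c_i$ is finite. This requires the nodes $M_1,\ldots,M_{p-2m+1}$ to be pairwise distinct, so that no factor $M_i-M_j$, $M_i-M_1$ or $M_i-M_{p-2m+1}$ vanishes.

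For distinctness, use \eqref{eq:ultrasphericalequiv} of Theorem \ref{bubble}. It identifies ${\cal J}_{p-2m-1}^{-m-2}$ with a multiple of the ultraspherical polynomial ${\cal C}^{(\alpha)}_{p-2m-1}$, $2\alpha=2m+3$. This family is orthogonal on $[-1,1]$ with a positive weight, so its $p-2m-1$ roots $\mu_i$ are real, simple, and lie in the open interval $(-1,1)$. Scaling by $M_i=M+h\mu_i/2$ places the interior nodes strictly between $M_1=M-h/2$ and $M_{p-2m+1}=M+h/2$ and keeps them distinct. The denominator is therefore nonzero.

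With this in place, \eqref{eq:lhat1} follows at once. For $j\neq i$ with $2\le j\le p-2m$, the factor $(x-M_j)$ vanishes at $x=M_j$. For $j=i$, each numerator factor equals its denominator counterpart, so the value is $1$.

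For \eqref{eq:lhat2}, take $j=1$. The polynomial ${\hat L}_{i,\Delta}^{m,p}$ has the factor $(x-M_1)^{m+1}$ times a polynomial $g$. By Leibniz's rule, every derivative of order $k\le m$ is a sum of terms, each containing $\frac{d^r}{dx^r}(x-M_1)^{m+1}$ with $r\le k\le m$. Each such factor is a multiple of $(x-M_1)^{m+1-r}$ and so vanishes at $M_1$. Hence $({\hat L}_{i,\Delta}^{m,p})^{(k)}(M_1)=0$ for $k=0,\ldots,m$, which covers the required range $k=1,\ldots,m$ and also gives $k=0$. The argument at $M_{p-2m+1}$ is identical, using the factor $(x-M_{p-2m+1})^{m+1}$.

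The only step needing more than inspection is the distinctness and interiority of the roots $\mu_i$. I would settle it by citing the standard zero property of orthogonal polynomials together with \eqref{eq:ultrasphericalequiv}. Alternatively, I would use the three-term recurrence \eqref{eq:threetermrecurrence} with positive recursion coefficients, since $m\le -2$ in the index of ${\cal J}^{-m-2}$.
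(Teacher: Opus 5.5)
Your proposal is correct and is essentially the paper's proof, which likewise reads \eqref{eq:lhat1}--\eqref{eq:lhat2} directly off the product form of ${\hat L}_{i,\Delta}^{m,p}$: Lagrange factors at the interior nodes times $(x-M_1)^{m+1}(x-M_{p-2m+1})^{m+1}$. Your check that the nodes are distinct, via \eqref{eq:ultrasphericalequiv} and the zero property of orthogonal polynomials, supplies a step the paper leaves implicit; your fallback via the three-term recurrence would need an extra estimate (e.g.\ that the recurrence coefficients are below $1/4$) to place the zeros inside $(-1,1)$, so keep the orthogonality route.
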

\begin{proof}
The ${\hat L}_{i,\Delta}^{m,p}(x)$ are the Lagrange interpolants at the $M_j$, $j=2, 3, \ldots, p-2m-2,$ multiplied by the binomials $\frac{(x-M_j)^{m+1}}{(M_i-M_j)^{m+1}}$, $j=1, p-2m-1$, which together imply \eqref{eq:lhat1} and \eqref{eq:lhat2}.
\end{proof}
\noindent
To prove the comparable properties of the ${\hat A}_{l,\Delta}^{m,p}(x)$ and ${\hat B}_{l,\Delta}^{m,p}(x)$ it is helpful to decompose them into smaller components. To that end let
\begin{equation}
{\hat A}_{l,\Delta}^{m,p}(x)=F_{l,\Delta}^m(x)G_{p,\Delta}^m(x), \label{eq:hataproduct}   
\end{equation}
\noindent
where
\begin{align} 
F_{l,\Delta}^m(x)&=\sum_{j=0}^{m-l}A_{j,l}(x-M_1)^{j+l}, \label{eq:defatilde} \\
G_{p,\Delta}^m(x)&={\bar G}_{p,\Delta}^m(x-M_{p-2m+1})^{m+1}\prod_{j=2}^{p-2m}(x-M_j), \label{eq:defabara} \\
{\bar G}_{p,\Delta}^m&=\frac{1}{(M_1-M_{p-2m+1})^{m+1}\prod_{j=2}^{p-2m}(M_1-M_j)}, \label{eq:defgbar} \\
{\hat G}_{p,\Delta}^m(x)&=\frac{m+1}{x-M_{p-2m+1}}+\sum_{j=2}^{p-2m}\frac{1}{x-M_j}, \label{eq:acheck}
\end{align}
\noindent
with analogous functions for ${\hat B}_{l,\Delta}^{m,p}.$

\begin{lemma}
The following relationships hold for the functions defined in \eqref{eq:defatilde}-\eqref{eq:acheck}:
\begin{align}
({\hat G}_{p,\Delta}^m(M_1))^{(k)}&=(-1)^kk!\left(\frac{m+1}{(M_1-M_{p-2m+1})^{k+1}}+\sum_{j=2}^{p-2m}\frac{1}{(M_1-M_j)^{k+1}}\right), \nonumber \\
k&=0, 1, \ldots, m-1,\label{eq:achecka} \\
(F_{l,\Delta}^m(M_1))^{(i)}&=\begin{cases}
0 & i < l, \\
A_{i-l,l}i! & i \ge l,
\end{cases} \label{eq:atildea} \\
G_{p,\Delta}^m(M_j)&=0, \, j =2, 3, \ldots, p-2m+1, \label{eq:zero} \\
(G_{p,\Delta}^m(M_{p-2m+1}))^{(k)}&=0, k=0, 1, \ldots, m, \label{eq:derivzero} \\
G_{p,\Delta}^m(M_1)&= 1, \label{eq:nonzero} \\
(G_{p,\Delta}^m(M_1))^{(k)}&=\sum_{i=0}^{k-1}\binom{k-1}{i}(G_{p,\Delta}^m(M_1))^{(i)}({\hat G}_{p,\Delta}^m(M_1))^{(k-1-i)}, \, k=1, 2, \ldots, m. \label{eq:abarderiv1}
\end{align}
\end{lemma}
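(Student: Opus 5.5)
Each of the six relations follows from the definitions \eqref{eq:defatilde}--\eqref{eq:acheck} by direct computation, so I would verify them one at a time. The only one that needs an idea is \eqref{eq:abarderiv1}, which comes from logarithmic differentiation.

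\emph{Relation \eqref{eq:achecka}.} Differentiate ${\hat G}_{p,\Delta}^m(x)$ term by term. Since $\frac{d^k}{dx^k}(x-c)^{-1}=(-1)^kk!(x-c)^{-(k+1)}$, evaluating at $x=M_1$ gives \eqref{eq:achecka} immediately. This is legitimate because $M_1$ differs from every $M_j$ with $j=2,\ldots,p-2m+1$: the $M_j$, $j=2,\ldots,p-2m$, are the scaled roots of the ultraspherical polynomial, which lie in the open interval $(-1,1)$ by \eqref{eq:ultrasphericalequiv} and the standard zero location of orthogonal polynomials, and $M_{p-2m+1}=M+h/2\neq M_1$.

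\emph{Relation \eqref{eq:atildea}.} $F_{l,\Delta}^m$ is a polynomial in $(x-M_1)$ whose lowest power is $l$. Its $i$-th derivative at $M_1$ therefore picks out $i!$ times the coefficient of $(x-M_1)^i$. That coefficient is $0$ for $i<l$ and $A_{i-l,l}$ for $l\le i\le m$.

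\emph{Relations \eqref{eq:zero}--\eqref{eq:nonzero}.} These are read off from the factored form \eqref{eq:defabara}. $G_{p,\Delta}^m$ contains a simple factor $(x-M_j)$ for each $j=2,\ldots,p-2m$, which gives \eqref{eq:zero} at those points. It also contains the factor $(x-M_{p-2m+1})^{m+1}$, a zero of order $m+1$; this gives \eqref{eq:derivzero}, and with it the remaining case $j=p-2m+1$ of \eqref{eq:zero}. Finally, ${\bar G}_{p,\Delta}^m$ in \eqref{eq:defgbar} is by construction the reciprocal of the product evaluated at $M_1$, which gives \eqref{eq:nonzero}.

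\emph{Relation \eqref{eq:abarderiv1}.} Away from the zeros of $G_{p,\Delta}^m$, in particular near $M_1$, logarithmic differentiation of \eqref{eq:defabara} gives
\begin{equation}
(G_{p,\Delta}^m)^{\prime}(x)=G_{p,\Delta}^m(x){\hat G}_{p,\Delta}^m(x).
\end{equation}
Both sides are analytic in a neighbourhood of $M_1$. Applying the Leibniz rule to the $(k-1)$-th derivative of this identity and evaluating at $x=M_1$ yields \eqref{eq:abarderiv1} for $k=1,\ldots,m$. This requires derivatives of ${\hat G}_{p,\Delta}^m$ only up to order $m-1$, which is exactly the range covered by \eqref{eq:achecka}.

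The main point needing care is confirming that no $M_j$ coincides with $M_1$, so that ${\hat G}_{p,\Delta}^m$ is smooth at $M_1$. As noted above, this follows from the roots of the ultraspherical polynomial being interior to $(-1,1)$. The rest is bookkeeping.
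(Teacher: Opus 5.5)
Your proposal is correct and follows the paper's proof essentially step for step. Like the paper, you differentiate ${\hat G}_{p,\Delta}^m$ term by term, read off the Taylor coefficients of $F_{l,\Delta}^m$ at $M_1$, inspect the factored form of $G_{p,\Delta}^m$, and use the logarithmic-derivative identity $(G_{p,\Delta}^m)^{\prime}=G_{p,\Delta}^m{\hat G}_{p,\Delta}^m$ with the Leibniz rule; your two extra checks, that $M_1$ differs from every other node because the ultraspherical roots lie in $(-1,1)$ and that \eqref{eq:atildea} is only meaningful for $l\le i\le m$, are details the paper leaves implicit.
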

\begin{proof}
Since from \eqref{eq:acheck}
\begin{equation}
({\hat G}_{p,\Delta}^m(x))^{(k)}=(-1)^kk!\left(\frac{m+1}{(x-M_{p-2m+1})^{k+1}}+\sum_{j=2}^{p-2m}\frac{1}{(x-M_j)^{k+1}}\right),    
\end{equation}
\noindent
equation \eqref{eq:achecka} follows directly.  From \eqref{eq:defatilde}
\begin{equation}
(F_{l,\Delta}^m(x))^{(i)}=
\sum_{j={\rm max}(0,i-l)}^{m-l}A_{j,l}\frac{(j+l)!}{(j+l-i)!}(x-M_1)^{j+l-i}. \label{eq:ahatderivab}
\end{equation}
\noindent
When $i < l$ and $x=M_1$, \eqref{eq:ahatderivab} implies that
\begin{equation}
(F_{l,\Delta}^m(M_1))^{(i)}=0, \label{eq:atildederivaa}
\end{equation}
\noindent
since for each term in the sum, $(x-M_1)^{j+l-i}$, $j+l-i > 0$.  When $i \ge l$, the term $(x-M_1)^{j+l-i}$ in \eqref{eq:ahatderivab} is non-zero if $x=M_1$ only when $j=i-l$ which implies that
\begin{equation}
(F_{l,\Delta}^m(M_1))^{(i)}=A_{i-l,l}i!. \label{eq:atildederivb}
\end{equation}
\noindent
Together \eqref{eq:atildederivaa} and \eqref{eq:atildederivb} establish \eqref{eq:atildea}. Equations \eqref{eq:zero}-\eqref{eq:nonzero} follow directly from \eqref{eq:defabara}.  Taking the log of both sides of \eqref{eq:defabara} and differentiating yields
\begin{equation}
\frac{(G_{p,\Delta}^m(x))^{\prime}}{G_{p,\Delta}^m(x)}={\hat G}_{p,\Delta}^m(x). \label{eq:lnb}
\end{equation}
\noindent
Substituting \eqref{eq:nonzero} into \eqref{eq:lnb} yields \eqref{eq:abarderiv1} with $k=1$.  Differentiating \eqref{eq:lnb} $k-1$ times gives
\begin{equation}
(G_{p,\Delta}^m(x))^{(k)}=\sum_{i=0}^{k-1}\binom{k-1}{i}(G_{p,\Delta}^m(x))^{(i)}({\hat G}_{p,\Delta}^m(x))^{(k-1-i)}, \, k \ge 1. \label{eq:abarderiv2}
\end{equation}
\noindent
Substituting \eqref{eq:nonzero} into \eqref{eq:abarderiv2} for each $k$ yields \eqref{eq:abarderiv1}.
\end{proof}
\noindent
\begin{lemma} \label{ahatlemma}
Based on these results the ${\hat A}_{l,\Delta}^{m,p}(x)$, and ${\hat B}_{l,\Delta}^{m,p}(x)$, $l=0, 1, \ldots m$, of \eqref{eq:ahatdefinition}-\eqref{eq:Brequirements} satisfy
\begin{align}
{\hat A}_{l,\Delta}^{p,m}(M_j)&=0, j=2, 3, \ldots, p-2m, \, l =0, 1, \ldots m, \\
({\hat A}_{l,\Delta}^{p,m})^{(k)}(M_j)&=\begin{cases}
0 & j=p-2m+1, \, k=0, 1, \ldots, m, \, l=0, 1, \ldots, m, \\
0 & j=1, \, k=0, 1, \ldots, m, \, l=0, 1, \ldots, m, \, k \ne l, \\
1 & j=1, \, k=0, 1, \ldots, m, \, l=0, 1, \ldots, m, \, k=l,
\end{cases}
\end{align}
\begin{align}
{\hat B}_{l,\Delta}^{p,m}(M_j)&=0, j=2, 3, \ldots, p-2m, \, l =0, 1, \ldots m, \\
({\hat B}_{l,\Delta}^{p,m})^{(k)}(M_j)&=\begin{cases}
0 & j=1, \, k=0, 1, \ldots, m, \, l=0, 1, \ldots, m, \\
0 & j=p-2m+1, \, k=0, 1, \ldots, m, \, l=0, 1, \ldots, m, \, k \ne l, \\
1 & j=p-2m+1, \, k=0, 1, \ldots, m, \, l=0, 1, \ldots, m, \, k=l,
\end{cases}
\end{align}
\end{lemma}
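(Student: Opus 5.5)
The plan is to use the product decomposition \eqref{eq:hataproduct}, ${\hat A}_{l,\Delta}^{m,p}=F_{l,\Delta}^m G_{p,\Delta}^m$, together with the preceding lemma. The zero conditions come for free from the factor $G_{p,\Delta}^m$. By \eqref{eq:zero} it vanishes at $M_j$, $j=2,\ldots,p-2m$, so ${\hat A}_{l,\Delta}^{m,p}(M_j)=0$ there. By \eqref{eq:derivzero} it has a zero of order $m+1$ at $M_{p-2m+1}$. Leibniz's rule then gives $({\hat A}_{l,\Delta}^{m,p})^{(k)}(M_{p-2m+1})=0$ for $k=0,\ldots,m$, because every term contains a derivative of $G_{p,\Delta}^m$ of order at most $m$ evaluated at $M_{p-2m+1}$.

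The substantive part is showing that the conditions \eqref{eq:Arequirements} at $M_1$ can be met, and uniquely, even though there are $m+1$ conditions but only $m-l+1$ unknowns $A_{0,l},\ldots,A_{m-l,l}$. Apply Leibniz at $M_1$:
\begin{equation*}
({\hat A}_{l,\Delta}^{m,p})^{(k)}(M_1)=\sum_{i=0}^{k}\binom{k}{i}(F_{l,\Delta}^m(M_1))^{(i)}(G_{p,\Delta}^m(M_1))^{(k-i)} .
\end{equation*}
By \eqref{eq:atildea}, the terms with $i<l$ vanish. So for $k<l$ the whole sum is zero, and those $l$ conditions hold automatically. This accounts for the apparent surplus noted in the Remark.

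For $k\ge l$ the system is lower triangular in the unknowns. Using $G_{p,\Delta}^m(M_1)=1$ from \eqref{eq:nonzero}, the case $k=l$ reads $A_{0,l}\,l!=1$, so $A_{0,l}=1/l!$. For $k=l+r$ with $r=1,\ldots,m-l$, the condition is
\begin{equation*}
0=A_{r,l}(l+r)!+\sum_{i=l}^{l+r-1}\binom{l+r}{i}A_{i-l,l}\,i!\,(G_{p,\Delta}^m(M_1))^{(l+r-i)} .
\end{equation*}
This determines $A_{r,l}$ recursively and uniquely. The derivatives $(G_{p,\Delta}^m(M_1))^{(k)}$, $k\le m$, are themselves explicitly computable from \eqref{eq:abarderiv1} and \eqref{eq:achecka}. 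A short induction on $r$ then gives existence and uniqueness of all $A_{j,l}$. With them, ${\hat A}_{l,\Delta}^{m,p}$ satisfies the Kronecker conditions at $M_1$ for $k=0,\ldots,m$.

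The case of ${\hat B}_{l,\Delta}^{m,p}$ follows by the same argument with the roles of $M_1$ and $M_{p-2m+1}$ interchanged. One can either define the analogous $F$, $G$, ${\hat G}$ (with $m+1$ and the sign pattern adjusted), or note that the reflection $x\mapsto 2M-x$ maps the node set to itself, since the roots $\mu_i$ of the ultraspherical polynomial are symmetric. The main point requiring care, and the one I expect to be the obstacle, is the triangular bookkeeping: checking that the $k<l$ conditions are genuinely automatic and that the diagonal coefficient $(l+r)!$ never vanishes. Both follow from \eqref{eq:atildea}.
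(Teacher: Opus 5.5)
Your proposal is correct and follows the paper's proof essentially step for step. The zeros at $M_2,\ldots,M_{p-2m}$ and at $M_{p-2m+1}$ come from $G_{p,\Delta}^m$ via Leibniz, the $k<l$ conditions at $M_1$ are automatic by \eqref{eq:atildea}, the remaining $m-l+1$ conditions form a lower-triangular system solved recursively (your recursion is the paper's \eqref{eq:AJLb} multiplied by $(l+r)!$), and ${\hat B}$ is handled analogously. The only difference is cosmetic: you get $A_{0,l}=1/l!$ because you take \eqref{eq:hataproduct} literally, whereas the paper keeps the $1/l!$ prefactor of \eqref{eq:ahatdefinition} (reinserted in \eqref{eq:ahatderivaa}) and so obtains $A_{0,l}=1$.
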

\begin{proof}
The first step is to show that the conditions \eqref{eq:Arequirements} can be satisfied.  It immediately follows from \eqref{eq:hataproduct} and \eqref{eq:zero} that
\begin{equation}
{\hat A}_{l,\Delta}^{m,p}(M_j)=0, \, j=2, 3, \ldots M_{p-2m}.
\end{equation}
\noindent
Differentiating \eqref{eq:hataproduct} $k$ times yields
\begin{equation}
({\hat A}_l^{m,p}(x))^{(k)}=\frac{1}{l!}\sum_{i=0}^k\binom{k}{i}(F_{l,\Delta}^m(x))^{(i)}(G_{p,\Delta}^m(x))^{(k-i)}, \, k \ge 0.\label{eq:ahatderivaa}
\end{equation}
\noindent
Then \eqref{eq:derivzero} and \eqref{eq:ahatderivaa} imply that
\begin{equation}
({\hat A}_{l,\Delta}^{m,p}(M_{p-2m+1}))^{(k)}=0, \, k=0, 1, \ldots, m, \, l=0, \ldots m.
\end{equation}
\noindent
Thus, for fixed $l$, $p-m$ conditions are automatically satisfied. 
Furthermore, when $k < l$, \eqref{eq:atildea} and \eqref{eq:ahatderivaa} imply that
\begin{equation}
({\hat A}_l^{m,p}(M_1))^{(k)}=0.
\end{equation}
\noindent
Hence, an additional $l$ conditions are automatically satisfied leaving $m-l+1$ conditions for the $m-l+1$ coefficients $A_{j,l}$.  The following algorithm confirms that the remaining conditions suffice to determine these coefficients.  
\noindent
Combining \eqref{eq:atildea} and \eqref{eq:ahatderivaa} when $k \ge l$ yields
\begin{align}
({\hat A}_l^{m,p}(M_1))^{(k)}&=\frac{1}{l!}\sum_{i=0}^{l-1}(F_{l,\Delta}^m(M_1))^{(i)}(G_{p,\Delta}^m(M_1))^{(k-i)}+\frac{1}{l!}\sum_{i=l}^{k}(F_{l,\Delta}^m(M_1))^{(i)}(G_{p,\Delta}^m(M_1))^{(k-i)} \nonumber \\
&=\frac{1}{l!}\sum_{i=l}^{k}(F_{l,\Delta}^m(M_1))^{(i)}(G_{p,\Delta}^m(M_1))^{(k-i)} \nonumber \\
&=\frac{1}{l!}\sum_{i=l}^k \binom{k}{i}A_{i-l,l}i!(G_{p,\Delta}^m(M_1))^{(k-i)}. \label{eq:ahatderivf}
\end{align}
\noindent
Using \eqref{eq:ahatderivf} with $k=l$, together with \eqref{eq:Arequirements} and \eqref{eq:nonzero} it follows that
\begin{equation}
A_{0,l}=1. \label{eq:AJLa}
\end{equation}
\noindent
Solving \eqref{eq:ahatderivf} successively for $A_{j-l,l}$, $j=l+1, l+2, \ldots k$, using \eqref{eq:nonzero} at each step yields
\begin{equation}
A_{j-l,l}=-\sum_{i=l}^{j-1}\frac{A_{i-l,l}(G_{p,\Delta}^m(M_1))^{(j-i)}}{(j-i)!}. \label{eq:AJLb}
\end{equation}
\noindent
By re-indexing with $i$ replaced by $i+l$ and $j$ replaced by $j+l$ \eqref{eq:AJLb} becomes
\begin{equation}
A_{j,l}=-\sum_{i=0}^{j-1}\frac{A_{i,l}(G_{p,\Delta}^m(M_1))^{j-i}}{(j-i)!}, \, j=1, 2, \ldots, m-l. \label{eq:AJLc}
\end{equation}
\noindent
Thus, the $A_{j,l}$ can be found successively from $j=0, 1, \ldots, m-l$, with \eqref{eq:abarderiv1} confirming that \eqref{eq:Arequirements} can be satisfied.  The $B_{j,l}$, which satisfy \eqref{eq:Brequirements}, can be handled in a comparable manner.  
\end{proof}
\begin{lemma} \label{basis}
The ${\hat L}_{i,\Delta}^{m,p}(x)$, $i=2, 3, \ldots, p-2m$, ${\hat A}_{l,\Delta}^{m,p}(x)$, and ${\hat B}_{l,\Delta}^{m,p}(x)$, $l=0, 1, \ldots, m,$ constitute a basis for $S^{p,\Delta}$.
\end{lemma}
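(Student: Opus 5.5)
We prove Lemma \ref{basis} by a dimension count combined with linear independence, which Lemmas \ref{lhatlemma} and \ref{ahatlemma} reduce to reading off values at the interpolation nodes.

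First, all the functions lie in $S^{p,\Delta}$. Each ${\hat L}_{i,\Delta}^{m,p}(x)$ is a product of two factors of degree $m+1$ with a Lagrange factor of degree $p-2m-2$, so its degree is $p$. For ${\hat A}_{l,\Delta}^{m,p}(x)$, the factor $F_{l,\Delta}^m$ in \eqref{eq:hataproduct} has degree at most $m$. The factor $G_{p,\Delta}^m$ has degree $(m+1)+(p-2m-1)=p-m$. Hence the product has degree at most $p$, and the same holds for ${\hat B}_{l,\Delta}^{m,p}(x)$. Next, count the functions. There are $p-2m-1$ functions ${\hat L}$, $m+1$ functions ${\hat A}$ and $m+1$ functions ${\hat B}$, giving $p+1$ in total. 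This equals $\dim S^{p,\Delta}$, the dimension of the polynomials of degree at most $p$ on $\bar\Delta$. It therefore suffices to prove linear independence (equivalently, spanning).

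For independence, suppose
\[
\sum_{i=2}^{p-2m}c_i{\hat L}_{i,\Delta}^{m,p}(x)+\sum_{l=0}^m\bigl(\alpha_l{\hat A}_{l,\Delta}^{m,p}(x)+\beta_l{\hat B}_{l,\Delta}^{m,p}(x)\bigr)\equiv 0 .
\]
Apply the $p+1$ linear functionals $\ell_j(q)=q(M_j)$ for $j=2,\ldots,p-2m$, together with $q\mapsto q^{(k)}(M_1)$ and $q\mapsto q^{(k)}(M_{p-2m+1})$ for $k=0,\ldots,m$. By Lemma \ref{ahatlemma}, ${\hat A}_l$ and ${\hat B}_l$ vanish at the interior nodes and are dual to the derivative functionals at the endpoints. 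By \eqref{eq:lhat1}, the ${\hat L}_i$ are dual to the interior point evaluations. If the remaining cross terms vanish, the matrix of functionals against basis functions is the identity, and every coefficient is zero.

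The one point needing care is that Lemma \ref{lhatlemma} as stated, in \eqref{eq:lhat2}, only covers $k=1,\ldots,m$ at the endpoints. We also need ${\hat L}_i(M_1)={\hat L}_i(M_{p-2m+1})=0$. This follows immediately from the factors $(x-M_1)^{m+1}$ and $(x-M_{p-2m+1})^{m+1}$, which vanish to order $m+1$ and so kill all derivatives of order $0,\ldots,m$. Similarly, for ${\hat A}_l$ at $M_{p-2m+1}$ all derivatives up to order $m$ vanish because of the factor $(x-M_{p-2m+1})^{m+1}$, as in \eqref{eq:derivzero}. The analogous statement holds for ${\hat B}_l$ at $M_1$.

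The remaining substantive issue is that the duality of the ${\hat A}_l$ at $M_1$ relies on the recursive solution \eqref{eq:AJLc} actually producing $({\hat A}_l)^{(k)}(M_1)=\delta_{kl}$ for every $k\le m$. We take this from Lemma \ref{ahatlemma}. Specifically, \eqref{eq:ahatderivf} is lower triangular in the unknowns $A_{j,l}$ with unit diagonal, since $G_{p,\Delta}^m(M_1)=1$ by \eqref{eq:nonzero}, so it is uniquely solvable. Hence the Hermite-type interpolation matrix is the identity, and the functions form a basis. As a corollary, any $q\in S^{p,\Delta}$ equals $\sum_i q(M_i){\hat L}_i+\sum_l\bigl(q^{(l)}(M_1){\hat A}_l+q^{(l)}(M_{p-2m+1}){\hat B}_l\bigr)$, which is the form used later in Theorem \ref{interpolation}.
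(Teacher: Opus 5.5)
Your argument is correct and takes the same route as the paper, which simply says the result follows from Lemmas \ref{lhatlemma} and \ref{ahatlemma}; you make the implicit steps explicit (there are $p+1$ functions of degree at most $p$, and the matrix of nodal functionals against them is the identity) and you correctly supply the $k=0$ endpoint case that \eqref{eq:lhat2} omits. One small imprecision: the diagonal entries of the triangular system \eqref{eq:ahatderivf} are $k!/l!$, not $1$, but they are nonzero because $G_{p,\Delta}^m(M_1)=1$, so unique solvability is unaffected.
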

\begin{proof}
The result follows directly from Lemmas \ref{lhatlemma} and \ref{ahatlemma}.
\end{proof}
\begin{lemma} \label{upsilonp1lemma}
The lowest degree monic polynomial $p(x)$ satisfying
\begin{align}
p(M_j)&=0, j=2, 3, \ldots, p-2m, \label{eq:upsilonzerosa} \\
(p(M_j))^{(k)}&=0, j=1, p-2m+1, \, k=0, 1, \ldots, m, \label{eq:upsilonzerosb}
\end{align}
\noindent
is $\Upsilon_{p+1,\Delta}^m(x)$.
\end{lemma}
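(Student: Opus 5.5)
The plan is a root count followed by a factorization. Any polynomial satisfying \eqref{eq:upsilonzerosa}--\eqref{eq:upsilonzerosb} vanishes at $p-2m-1$ points and has zeros of order at least $m+1$ at two further points, so its degree is at least $(p-2m-1)+2(m+1)=p+1$. First I will check that these points are all distinct. The $M_j$, $j=2,\ldots,p-2m$, are the images under $\mu\mapsto M+h\mu/2$ of the roots $\mu_j$ of ${\cal J}_{p-2m-1}^{-m-2}(\mu)$. By \eqref{eq:ultrasphericalequiv} this polynomial is a nonzero multiple of the ultraspherical polynomial ${\cal C}_{p-2m-1}^{(\alpha)}$ with $2\alpha=2m+3>0$. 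It is therefore orthogonal on $[-1,1]$ with a positive weight, so its $p-2m-1$ roots are simple and lie in the open interval $(-1,1)$. Hence the $M_j$ are distinct interior points of $\Delta$, different from $M_1=M-h/2$ and $M_{p-2m+1}=M+h/2$.

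Next, any polynomial $q$ satisfying the conditions is divisible by
\[
(x-M_1)^{m+1}(x-M_{p-2m+1})^{m+1}\prod_{j=2}^{p-2m}(x-M_j),
\]
since the factors are pairwise coprime. A monic $q$ of the minimal degree $p+1$ must therefore equal this product exactly. This gives both existence and uniqueness of the lowest-degree monic solution. Lower degrees are impossible unless $q\equiv 0$, which is not monic.

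It remains to identify this product with $\Upsilon_{p+1,\Delta}^m(x)$. By \eqref{eq:upsilonp1a},
\[
\Upsilon_{p+1,\Delta}^m(x)=(x-M+h/2)^{m+1}(x-M-h/2)^{m+1}\Upsilon_{p-2m-1,\Delta}^{-m-2}(x),
\]
and the first two factors are $(x-M_1)^{m+1}(x-M_{p-2m+1})^{m+1}$. By \eqref{eq:defpsia}, $\Upsilon_{p-2m-1,\Delta}^{-m-2}(x)=(h/2)^{p-2m-1}{\cal J}_{p-2m-1}^{-m-2}(2(x-M)/h)$. Because ${\cal J}_{p-2m-1}^{-m-2}$ is monic of degree $p-2m-1$ (from \eqref{eq:JNormalize} and the monic normalization via $J_{m,p}$), this is a monic polynomial in $x$ with roots exactly $M_j=M+h\mu_j/2$. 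It therefore equals $\prod_{j=2}^{p-2m}(x-M_j)$, and the identification is complete.

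The main obstacle I expect is the distinctness and simplicity of the roots, together with the monic bookkeeping under the scaling. The root property I would take from the ultraspherical identification in Theorem~\ref{bubble} together with standard orthogonal polynomial theory, since the weight $(1-\mu^2)^{m+1}$ is positive on $(-1,1)$. The scaling check is immediate: $(h/2)^n\cdot(2/h)^n=1$ on the leading coefficient.
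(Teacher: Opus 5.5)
Your proposal is correct and takes the same route as the paper, whose proof consists only of citing the factorization \eqref{eq:upsilonp1a}. You supply the details the paper leaves implicit, and they are sound: the roots of ${\cal J}_{p-2m-1}^{-m-2}$ are simple and lie in $(-1,1)$, so the $p+1$ counted zeros are distinct; the divisibility argument gives both minimality and uniqueness; and the scaled factor is checked to be monic.
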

\begin{proof}
The result follows directly from \eqref{eq:upsilonp1a}.
\end{proof}
\noindent
\textit{Proof (of Theorem \ref{interpolation}).}  Proofs for the cases $m=0$ and $m=1$ can be found in \cite{M2003} and \cite{MR2010}, respectively. 

\noindent
Consider the expansion
\begin{equation} \label{eq:uexpansion}
u(x)-W_{p,\Delta}^m(x)=u(x)-v(x)+v(x)-\pi^{\Delta}v(x)+\pi^{\Delta}v(x)-W_{p,\Delta}^m(x)
\end{equation}
\noindent
and address each pair in turn.  For the first pair from \eqref{eq:vdefinition} and Taylor's Theorem it follows that $\forall x \in (M-h/2,M+h/2) \, \exists \, {\hat x}$ s.t.
\begin{equation}
u(x)-v(x)=\frac{u^{(p+s+2)}(M)}{(p+s+2)!}({\hat x}-M)^{p+s+2}.
\end{equation}
Moving to the second pair, using the linearity of $\pi^{\Delta}$ and \eqref{eq:vdefinition}
\begin{equation}
v(x)-\pi^{\Delta}v(x)=\sum_{i=p+1}^{p+s+1}((x-M)^i-\pi^{\Delta}(x-M)^i)\frac{u^{(i)}(M)}{i!}\equiv\sum_{i=p+1}^{p+s+1}v_i(x)\frac{u^{(i)}(M)}{i!}.
\end{equation}
\noindent
Since the $v_i(x)$, $i=p+1, p+2, \ldots, p+s+1$, satisfy \eqref{eq:upsilonzerosa}-\eqref{eq:upsilonzerosb} it follows from Lemma \ref{upsilonp1lemma} that
\begin{equation}
v_i(x)=\Upsilon_{p+1,\Delta}^m(x)\Psi_{i-p-1}^m(x), \,
i=p+1, p+2, \ldots, p+s+1,
\end{equation}
\noindent
where $\Psi_{i-p-1}^m(x)$ is a polynomial of degree $i-p-1$.  The coefficients of $\Psi_i^m(x)$ can be derived following the same procedure for the case $m=0$ in Theorem 3.1 in \cite{M2003}.  Finally, since $\pi^{\Delta}(v(x)-W_{p,\Delta}^m(x)) \in S^{p,\Delta}$ from Lemma \ref{basis} it follows that
\begin{align}
&\pi^{\Delta}(v(x)-W_{p,\Delta}^m(x))=\sum_{i=2}^{p-2m}(v(\Gamma_i)-W_{p,\Delta}(\Gamma_i)) {\hat L}_i(x)\\
&+\sum_{i=1}^{m+1}(v^{(i)}(\Gamma_1)-W_{p,\Delta}^{(i)}(\Gamma_1)){\hat A}_i^{m,p}(x)+\sum_{i=1}^{m+1}(v^{(i)}(\Gamma_{p-2m+1})-W_{p,\Delta}^{(i)}(\Gamma_{p-2m+1})){\hat B}_i^{m,p}(x).
\end{align}
\noindent
\qed

\noindent
Remark: It is easily verified that $\forall m, ~\Psi_0^m(x)=1, ~\Psi_1^m(x)=x-M$.

\noindent
Remark: This theorem generalizes Theorem 3.1 of \cite{M2003} and Theorem 3 of \cite{MR2010}.

Let the $L^2$ and $H^k$ norms over $\Delta$ be denoted by $\Vert \cdot \Vert_{0, \Delta}$, $\Vert \cdot \Vert_{k,\Delta}, k=1, 2, \ldots m+1$, and their corresponding inner products by $(\cdot,\cdot)_{0,\Delta}$, $(\cdot,\cdot)_{k,\Delta}, \, k=1, 2, \ldots, m+1$, respectively. In addition, let the  $H^k$ seminorms over $\Delta$ be denoted by $\vert \cdot \vert_{k,\Delta}, k=1, 2, \ldots m+1$.  Finally let ${\tilde C}(\cdot)$ and ${\bar C}(\cdot)$ denote generic constants that depend on a set of parameters such as the size of the domain, $\vert \Delta \vert$, $p$, and the order of the derivative $k$.

\begin{theorem} \label{mainresult}
Let \eqref{eq:leftspline}-\eqref{eq:Brequirements} hold and let $u(x) \in C^{p+2}({\bar \Delta})$.  Then
\begin{align}
&\Vert u-W_{p,\Delta}^m \Vert_{k,\Delta}^2 \nonumber \\
&=\left( \frac{h}{2}\right)^{2p-2k+3}\left( \frac{u^{(p+1)}(M)}{(p-k+1)!}\right)^2 \sum_{i=0}^{m-k+1}({\hat J}_{p-k+1,i}^{m-k})^2(J_{-1,p-k+1-2i})^2\frac{2}{2p-2k-4i+3} \\
&+{\cal O}(h^{2p+5-2k}), \, k \le m+1. \label{eq:deltaerr} \\
\end{align}
\noindent
where the $J_{p-k,i}^{m-k}$ are the coordinates of ${\cal J}_p^m(\mu)$ in the basis ${\cal J}_q^{-1}(\mu)$, $q=0, 1, \ldots, p$.
\end{theorem}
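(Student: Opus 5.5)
Starting from Theorem \ref{interpolation} with $s=0$, the error splits as
\begin{equation*}
u(x)-W_{p,\Delta}^m(x)=\frac{u^{(p+1)}(M)}{(p+1)!}\Upsilon_{p+1,\Delta}^m(x)+R(x),
\end{equation*}
where $R$ collects the Taylor remainder term and the three sums weighted by $v(M_i)-u(M_i)$, $v^{(l)}(M_1)-u^{(l)}(M_1)$, $v^{(l)}(M_{p-2m+1})-u^{(l)}(M_{p-2m+1})$. Since $v$ is the Taylor polynomial of degree $p+1$ about $M$ (taking $s=0$, and using only $u\in C^{p+2}$), each such difference is $O(h^{p+2-l})$. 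The generalized Lagrange functions ${\hat L}_{i,\Delta}^{m,p}$, ${\hat A}_{l,\Delta}^{m,p}$, ${\hat B}_{l,\Delta}^{m,p}$ are, after the change of variable $\mu=2(x-M)/h$, fixed polynomials in $\mu$ scaled by $h^{l}$ (for ${\hat A},{\hat B}$ with index $l$, by \eqref{eq:Arequirements}--\eqref{eq:Brequirements}). So the $k$-th derivative of $R$ is $O(h^{p+2-k})$ pointwise, and $\Vert R\Vert_{k,\Delta}=O(h^{p+5/2-k})$. The cross term with the leading part is then $O(h^{p+3/2-k})\cdot O(h^{p+5/2-k})=O(h^{2p+4-2k})$. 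This must be sharpened to $O(h^{2p+5-2k})$ by a parity argument: the next-order term is odd relative to the even/odd parity of $\Upsilon_{p+1,\Delta}^m$ (the $s=1$ term $\Upsilon_{p+1}\Psi_1$ with $\Psi_1=x-M$, per the remark after the theorem). I will therefore take $s=1$ and use the orthogonality of opposite-parity functions on the symmetric interval $\Delta$ to kill the cross term at that order.

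The main computation is $\Vert \Upsilon_{p+1,\Delta}^m\Vert_{k,\Delta}^2$ at leading order, which is dominated by the top seminorm $\vert\cdot\vert_{k,\Delta}$; lower derivatives carry extra powers of $h$. By \eqref{eq:defpsi}, $\Upsilon_{p+1,\Delta}^m=(h/2)^{p+1}{\cal J}_{p+1}^m(2(x-M)/h)$. Applying \eqref{eq:jint} $k$ times gives
\begin{equation*}
({\cal J}_{p+1}^m)^{(k)}=\frac{(p+1)!}{(p+1-k)!}{\cal J}_{p+1-k}^{m-k},
\end{equation*}
and the chain rule gives $(\Upsilon_{p+1,\Delta}^m)^{(k)}=(h/2)^{p+1-k}\frac{(p+1)!}{(p+1-k)!}{\cal J}_{p+1-k}^{m-k}(\mu)$. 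The prefactor $(p+1)!$ cancels against $1/(p+1)!$, producing $1/(p-k+1)!$. The change of variables $dx=(h/2)d\mu$ contributes one more factor $h/2$, giving $(h/2)^{2p-2k+3}$.

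It remains to evaluate $\int_{-1}^1({\cal J}_{p+1-k}^{m-k})^2d\mu$. Iterating \eqref{eq:phi4} expands ${\cal J}_{p+1-k}^{m-k}$ in the monic Legendre basis ${\cal J}_q^{-1}$; by parity, only $q=p+1-k-2i$ appears, for $i=0,\dots,m-k+1$. Its coordinates are the ${\hat J}_{p-k+1,i}^{m-k}$. Since ${\cal J}_q^{-1}=J_{-1,q}P_q$ by \eqref{eq:JNormalize} and $\int P_q^2=2/(2q+1)$, orthogonality gives exactly
\begin{equation*}
\sum_i({\hat J}_{p-k+1,i}^{m-k})^2(J_{-1,p-k+1-2i})^2\frac{2}{2p-2k-4i+3}.
\end{equation*}
The main obstacle is the remainder control: ensuring the cross term is genuinely $O(h^{2p+5-2k})$ via the parity/$s=1$ argument, and bounding derivatives of ${\hat A}_{l,\Delta}^{m,p}$ and ${\hat B}_{l,\Delta}^{m,p}$ uniformly in $h$ through the recursion \eqref{eq:AJLc}, whose $G^{(j)}(M_1)$ scale like $h^{-j}$.
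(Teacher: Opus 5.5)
Your leading-term computation is the paper's own. You differentiate $k$ times via \eqref{eq:jint} to get $(h/2)^{p+1-k}\frac{(p+1)!}{(p+1-k)!}{\cal J}_{p+1-k}^{m-k}(\mu)$, expand in the monic Legendre basis, use orthogonality, and absorb the lower seminorms into the remainder as in \eqref{eq:Pi1err}. For the remainder you use a scaling observation: $\hat L_{i,\Delta}^{m,p}$ is a fixed polynomial in $\mu=2(x-M)/h$, and $\hat A_{l,\Delta}^{m,p}=(h/2)^l a_l(\mu)$ with $a_l$ independent of $h$. This gives the same derivative bounds as the paper's recursion through $G_{p,\Delta}^m$ and \eqref{eq:AJLc}, and more directly. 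Once you have it, the recursion you mention at the end is unnecessary.

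Where you depart from the paper is the cross term, and there your concern is justified. The paper bounds $(\Pi_{p+1,\Delta}^m,\Pi_{p+2,\Delta}^m)_{k,\Delta}$ by Cauchy--Schwarz using \eqref{eq:Pi1err} and \eqref{eq:lemmaboundsa1}. As you compute, that yields only $O(h^{2p+4-2k})$, so the written proof falls one order short of the stated remainder. Your parity argument supplies the missing ingredient: the $j$-th derivatives of $\Upsilon_{p+1,\Delta}^m$ and of $(x-M)\Upsilon_{p+1,\Delta}^m$ have opposite parity about $M$, so their product integrates to zero over $\Delta$.

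You should, however, state that taking $s=1$ needs more smoothness than the statement assumes. The Taylor remainder and the nodal differences $v^{(l)}-u^{(l)}$ then involve $u^{(p+3)}$. So you are really proving the result for $u\in C^{p+3}(\bar\Delta)$; a Lipschitz $u^{(p+2)}$ would also suffice. With only $u\in C^{p+2}$, the same argument gives $o(h^{2p+4-2k})$, and this cannot be improved. For example, take $m=0$, $p=1$, $k=1$ and $u=(x-M)^2+|x-M|^{3+\alpha}$ with $0<\alpha<1$, so that $u\in C^{3}$. The linear interpolant then satisfies $\Vert u-W_{1,\Delta}^0\Vert_{1,\Delta}^2=h^3/3+\frac{8(3+\alpha)}{4+\alpha}(h/2)^{4+\alpha}+O(h^5)$, and the middle term is not $O(h^5)$. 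So the strengthened hypothesis is necessary, not just convenient.
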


To prove \eqref{eq:deltaerr} it is helpful to rewrite \eqref{eq:theorem1} in the case $s=0$ as
\begin{equation}
u(x)-W_{p,\Delta}^m(x)=\Upsilon_{p+1,\Delta}^m(x)\frac{u^{(p+1)}(M)}{(p+1)!}+\Pi_{p+2,\Delta}^m(x) \equiv \Pi_{p+1,\Delta}^m(x)+\Pi_{p+2,\Delta}^m(x), \label{eq:theorem1a}
\end{equation}
\noindent
where
\begin{align}
\Pi_{p+2,\Delta}^m(x,{\hat x})&=\frac{u^{(p+2)}(M)}{(p+2)!}({\hat x}-M)^{p+2}+\sum_{i=2}^{p-2m}(v(M_i)-u(M_i)) {\hat L}_{i,\Delta}^{m,p}(x) \nonumber \\
&+\sum_{l=0}^{m}(v^{(l)}(M_1)-u^{(l)}(M_1)){\hat A}_{l,\Delta}^{m,p}(x)+\sum_{l=0}^{m}(v^{(l)}(M_{p-2m+1})-u^{(l)}(M_{p-2m+1})){\hat B}_{l,\Delta}^{m,p}(x) \nonumber \\
&\equiv \sum_{i=1}^4\Pi_{p+2,\Delta}^{m,i}(x). \label{eq:xisum}
\end{align}
\noindent
Equations \eqref{eq:theorem1a}-\eqref{eq:xisum} imply that \textit{a priori} estimates of the error and its derivatives require bounds on the terms
\begin{equation}
\int_{\Delta} (\Pi_{p+1,\Delta}^m(x))^{(k)} (\Pi_{p+2,\Delta}^m(x))^{(k)}dx, \, \int_{\Delta} ((\Pi_{p+2,\Delta}^m(x))^{(k)})^2dx \label{eq:crosstermsa}
\end{equation}
\noindent
along with a formula for the leading term
\begin{equation}
\int_{\Delta} ((\Pi_{p+1,\Delta}^m(x))^{(k)})^2dx. \label{eq:leadingterm}
\end{equation}  
\noindent
Applying Cauchy-Schwarz to \eqref{eq:crosstermsa} yields
\begin{equation}
\int_{\Delta} (\Pi_{p+1,\Delta}^m(x))^{(k)} (\Pi_{p+2,\Delta}^m(x))^{(k)}dx \le \Vert \Pi_{p+1,\Delta}^m(x)\Vert_k\Vert \Pi_{p+2,\Delta}^m(x)\Vert_k. \label{eq:crosstermsb}
\end{equation}
\noindent
Cauchy-Schwarz also implies that
\begin{equation}
\int_{\Delta} ((\Pi_{p+2,\Delta}^m(x))^{(k)})^2dx \le 4 \sum_{i=1}^4 \Vert \Pi_{p+2,\Delta}^{m,i}\Vert_{k,\Delta}^2. \label{eq:crosstermsc}
\end{equation}

It is helpful to split the proof of the theorem into a series of lemmas beginning with bounds on the coefficients of ${\hat L}_{i,\Delta}^{p,m}(x)$, ${\hat A}_{i,\Delta}^{p,m}(x)$, and ${\hat B}_{i,\Delta}^{p,m}(x)$.  These bounds depend on the separation between the distinct roots of $\Upsilon_{p+1,\Delta}^m(x)$, $M_i$, $i=1, 2, \ldots, p-2m+1$, that is, the scaled roots of $\Upsilon_{p-2m-1,\Delta}^{-m-2}(\mu)$ along with the endpoints of $\Delta$.
\begin{lemma} \label{separation}
Let $\mu_1 < \mu_2 < \cdots < \mu_{p-2m-1}$ be the roots of $\Upsilon_{p-2m-1,\Delta}^{-m-2}(\mu)$.  Then
\begin{align}
\mu_2+1 &\ge \frac{4(m+2)}{R+2(m+2)}, \, 1-\mu_{p-2m} \ge \frac{4(m+2)}{R+2(m+2)}, \label{eq:spacinga} \\
\mu_{i+1}-\mu_i &\ge \frac{2^{11/4}\sqrt{m+2}}{\sqrt{R(R+4(m+2))}}, \label{eq:spacingb}
\end{align}
\noindent
where $R=2(p-2m-1)(p+2)$.
\end{lemma}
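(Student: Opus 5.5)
The plan is to work directly with the differential equation satisfied by $y(\mu)=\Upsilon_{p-2m-1,\Delta}^{-m-2}(\mu)$. By \eqref{eq:ultrasphericalequiv}, $y$ is a positive multiple of the ultraspherical polynomial ${\cal C}_n^{(\alpha)}$ with $n=p-2m-1$ and $2\alpha=2m+3$. It therefore satisfies
\begin{equation*}
(1-\mu^2)y''-a\mu y'+\tfrac{R}{2}y=0, \qquad a=2\alpha+1=2(m+2), \qquad n(n+2\alpha)=(p-2m-1)(p+2)=\tfrac{R}{2}.
\end{equation*}
This identity is what makes the particular combination $R=2(p-2m-1)(p+2)$ appear. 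Since $y$ is even or odd according to the parity of $n$ (Theorem \ref{bubble}), its roots are symmetric about $0$. So it suffices to prove the bound for $1-\mu_{p-2m}$, where $\mu_{p-2m}$ is the largest root, and to prove \eqref{eq:spacingb}. The roots are real, simple and lie in $(-1,1)$ by the orthogonality \eqref{eq:jortho} (equivalently, the classical theory of ultraspherical polynomials).

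For \eqref{eq:spacinga} I will evaluate the differential equation at $\mu=1$. This gives $a\,y'(1)=\tfrac{R}{2}y(1)$, so
\begin{equation*}
\sum_{j}\frac{1}{1-\mu_j}=\frac{y'(1)}{y(1)}=\frac{R}{2a}.
\end{equation*}
Every term in this sum is positive because all roots lie below $1$. Keeping only the term for the largest root gives
\begin{equation*}
1-\mu_{p-2m}>\frac{2a}{R}=\frac{4(m+2)}{R}\ge\frac{4(m+2)}{R+2(m+2)}.
\end{equation*}
The stated bound is weaker than this, so it follows at once. If a sharper constant were wanted, I would combine the first sum with the second moment $\sum_j (1-\mu_j)^{-2}=(y'/y)^2-y''/y$ at $\mu=1$. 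That moment is obtained by differentiating the ODE once and setting $\mu=1$, which gives $(a+2)y''(1)=(\tfrac R2-a)y'(1)$. The left endpoint follows by symmetry.

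For \eqref{eq:spacingb} I will use the standard "electrostatic" identities at a root. Write $y=(\mu-\mu_k)g$ and set $s_1=\sum_{j\ne k}(\mu_k-\mu_j)^{-1}$ and $s_2=\sum_{j\ne k}(\mu_k-\mu_j)^{-2}$. Then at $\mu_k$ we have $y'=g$, $y''=2g'$ and $y'''=3g''$, which give
\begin{equation*}
\frac{y''}{y'}=2s_1, \qquad \frac{y'''}{y'}=3(s_1^2-s_2).
\end{equation*}
The ODE evaluated at $\mu_k$ gives $y''/y'=a\mu_k/(1-\mu_k^2)$. The once-differentiated ODE, $(1-\mu^2)y'''-(a+2)\mu y''+(\tfrac R2-a)y'=0$, gives $y'''/y'$ as an explicit rational function of $\mu_k$. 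Eliminating, I get a closed form for $s_2$ of the shape
\begin{equation*}
s_2=\frac{R/2-a}{3(1-\mu_k^2)}+\frac{c(a)\,\mu_k^2}{(1-\mu_k^2)^2},
\end{equation*}
with $c(a)$ a simple explicit polynomial in $a$. Since $s_2\ge(\mu_{k+1}-\mu_k)^{-2}$, it then suffices to bound $s_2$ from above uniformly in $k$.

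I expect this uniform bound to be the main obstacle, because $s_2$ blows up as $\mu_k\to\pm1$. I will control it with the first part of the lemma: every root satisfies $1-\mu_k^2\ge 1-\mu_{p-2m}^2\ge 1-\mu_{p-2m}\ge 4(m+2)/(R+2(m+2))$, which keeps $1-\mu_k^2$ away from $0$. Substituting this into both terms of $s_2$ and simplifying should give $s_2\le R(R+2a)/(2^{11/2}(m+2))$, which is exactly the square of the reciprocal of the right side of \eqref{eq:spacingb}. If the crude substitution loses too much in the constant, the fallback is to maximize the rational function of $\mu_k^2$ exactly on the admissible range. Failing that, I would pass to the Liouville normal form $u=(1-\mu^2)^{a/4}y$ and apply a Sturm comparison argument to bound the gap between consecutive zeros by the reciprocal square root of the coefficient there.
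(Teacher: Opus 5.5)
Your argument for \eqref{eq:spacinga} is correct and self-contained; the paper itself gives no argument for this lemma, only an external citation. Evaluating the ODE at $\mu=1$ gives $\sum_j(1-\mu_j)^{-1}=R/(2a)$, hence $1-\mu_{p-2m}\ge 2a/R=4(m+2)/R$, which is stronger than what is claimed. Your electrostatic identity is also right. Carrying out the elimination gives
\[
s_2=\frac{R-2a}{6(1-\mu_k^2)}-\frac{a(a+8)\,\mu_k^2}{12(1-\mu_k^2)^2},
\]
so $c(a)=-a(a+8)/12<0$. The second term is nonpositive and should simply be dropped, not estimated.

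The gap is in the step you present as the main line for \eqref{eq:spacingb}. The chain $1-\mu_k^2\ge 1-\mu_{p-2m}^2\ge 1-\mu_{p-2m}\ge 4(m+2)/(R+2(m+2))$ throws away the factor $1+\mu_{p-2m}$, which tends to $2$ as $p\to\infty$. What it yields is $s_2\le (R-2a)(R+a)/(12a)$, and this does not imply the needed $s_2\le R(R+2a)/(2^{9/2}a)$. For large $R$ the two sides behave like $R^2/(12a)$ and $R^2/(2^{9/2}a)$, and $2^{9/2}\approx 22.6>12$. Concretely, for $m=0$, $p=21$ ($R=920$), your chain gives only $\mu_{k+1}-\mu_k\ge 0.0075$, while \eqref{eq:spacingb} asserts $\ge 0.0103$. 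Substituting your sharper endpoint bound $2a/R$ does not rescue it, because the loss is the factor $1+\mu_{p-2m}$.

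The repair is to keep that factor. Since $\mu_{p-2m}\ge 0$ and $\epsilon(2-\epsilon)$ is increasing on $[0,1]$,
\[
1-\mu_k^2\ge(1-\mu_{p-2m})(1+\mu_{p-2m})\ge \frac{2a}{R}\left(2-\frac{2a}{R}\right)=\frac{4a(R-a)}{R^2}.
\]
Hence $s_2\le (R-2a)R^2/(24a(R-a))\le R^2/(48(m+2))$, and so $\mu_{k+1}-\mu_k\ge\sqrt{48(m+2)}/R$. This dominates the right side of \eqref{eq:spacingb} because $48>2^{11/2}$ and $R+4(m+2)>R$.

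Your exact-maximization fallback also works. $s_2$ is a concave quadratic in $u=(1-\mu_k^2)^{-1}$, and maximizing it over $u\ge 1$ gives the stated bound with a short case split on where the vertex lies. That route does not even need \eqref{eq:spacinga}. The Sturm-comparison fallback is unnecessary.
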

\begin{proof}
\cite{V}
\end{proof}
\noindent

\begin{lemma}
Equations \eqref{eq:spacinga}-\eqref{eq:spacingb} imply that $\exists \, {\tilde C}(p,m)$ s.t. that the roots of $\Upsilon_{p+1,\Delta}^m(x)$, $M_j, \, j=1, 2, \ldots p-2m+1$ satisfy
\begin{equation}
\vert M_i-M_j \vert \ge {\tilde C}(p,m)h, \label{eq:space}
\end{equation}
\noindent
for all $i,j=2, 3, \ldots, p-2m$, $i \ne j$.
\end{lemma}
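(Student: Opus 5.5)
The plan is to reduce \eqref{eq:space} to the root-spacing estimates of Lemma \ref{separation} through the affine change of variables $M_j=M+h\mu_j/2$. For interior indices $i,j\in\{2,\ldots,p-2m\}$ the difference is
\begin{equation}
M_i-M_j=\frac{h}{2}(\mu_i-\mu_j).
\end{equation}
So it suffices to find a lower bound on $\vert\mu_i-\mu_j\vert$ that depends only on $p$ and $m$.

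First, fix the indexing. The roots are listed as $\mu_2<\mu_3<\cdots<\mu_{p-2m}$, which matches the indexing in \eqref{eq:middle}; the statement of Lemma \ref{separation} shifts the labels by one, but only consecutive differences matter. For $i\ne j$, say $i>j$, telescoping gives
\begin{equation}
\mu_i-\mu_j=\sum_{r=j}^{i-1}(\mu_{r+1}-\mu_r)\ge \mu_{j+1}-\mu_j .
\end{equation}
By \eqref{eq:spacingb} this is at least $2^{11/4}\sqrt{m+2}/\sqrt{R(R+4(m+2))}$ with $R=2(p-2m-1)(p+2)$. Hence
\begin{equation}
{\tilde C}(p,m)=\frac{2^{7/4}\sqrt{m+2}}{\sqrt{R(R+4(m+2))}}
\end{equation}
works. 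This constant is independent of $h$ and of $M$, which is what later $h$-power counting needs.

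Although the statement only asks for $i,j\in\{2,\ldots,p-2m\}$, the coefficient bounds for ${\hat A}$ and ${\hat B}$ also involve $M_1=M-h/2$ and $M_{p-2m+1}=M+h/2$, so I will record the endpoint separations too. By \eqref{eq:spacinga},
\begin{equation}
M_j-M_1=\frac{h}{2}(\mu_j+1)\ge\frac{h}{2}(\mu_2+1)\ge\frac{2(m+2)}{R+2(m+2)}h ,
\end{equation}
and symmetrically for $M_{p-2m+1}-M_j$. The endpoints themselves are separated by $h$. Taking the minimum of these three constants extends \eqref{eq:space} to all $i,j=1,\ldots,p-2m+1$.

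There is no genuine obstacle here. The only points needing care are the index shift between Lemma \ref{separation} and the definition of the $M_i$, and the degenerate case $p=2m+2$, where there is a single interior node and the interior statement is vacuous.
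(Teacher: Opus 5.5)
Your proof is correct and takes the same route as the paper, whose proof consists only of the remark that the result follows directly from Lemma \ref{separation}; you supply the omitted details, namely the rescaling $M_i-M_j=\frac{h}{2}(\mu_i-\mu_j)$, the telescoping step, the index-shift bookkeeping, and the explicit constant. Your extension to the endpoints $M_1$ and $M_{p-2m+1}$ via \eqref{eq:spacinga} is a worthwhile addition, because the denominators in \eqref{eq:lhatdefinition} and \eqref{eq:achecka}, which the later estimates rely on, also require those separations and the lemma as stated does not cover them.
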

\begin{proof}
The result follows directly from Lemma \ref{separation}.
\end{proof}
\noindent
The next step is to derive bounds on $({\hat L}_{i,\Delta}^{p,m}(x))^{(k)}$, ${\hat A}_{i,\Delta}^{p,m}(x)^{(k)}$, and ${\hat B}_{i,\Delta}^{p,m}(x)^{(k)}$.  These are then used to obtain bounds on the terms $({\hat A}_l^{m,p}(x))^{(k)}$ and $({\hat B}_l^{m,p}(x))^{(k)}$ and from there bounds on norms of \eqref{eq:crosstermsb}-\eqref{eq:crosstermsc}.
\begin{lemma}\label{forestlemma}
The coefficients $A_{j,l}$ and $B_{j,l}$ of ${\hat A}_l^{m,p}(x)$ and ${\hat B}_l^{m,p}(x)$ in \eqref{eq:ahatdefinition}-\eqref{eq:Brequirements}, respectively, satisfy
\begin{equation}
A_{j,l}={\cal O}(h^{-j}), B_{j,l}={\cal O}(h^{-j}), j=0, 1, \ldots m-l, \, l=0, \ldots, m. \label{eq:ajl}
\end{equation}
\end{lemma}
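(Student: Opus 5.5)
The plan is to induct on $j$ through the explicit recursion for the coefficients in Lemma \ref{ahatlemma}, namely \eqref{eq:AJLa} and \eqref{eq:AJLc}:
\begin{equation*}
A_{0,l}=1, \qquad A_{j,l}=-\sum_{i=0}^{j-1}\frac{A_{i,l}\,(G_{p,\Delta}^m(M_1))^{(j-i)}}{(j-i)!}, \quad j=1, \ldots, m-l.
\end{equation*}
The only input required is the scaling estimate
\begin{equation*}
(G_{p,\Delta}^m(M_1))^{(k)}={\cal O}(h^{-k}), \quad k=0, 1, \ldots, m.
\end{equation*}
Granting it, the induction is immediate. The base case is $A_{0,l}=1={\cal O}(h^0)$. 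If $A_{i,l}={\cal O}(h^{-i})$ for all $i<j$, then each term in the sum is ${\cal O}(h^{-i})\,{\cal O}(h^{-(j-i)})={\cal O}(h^{-j})$. Hence $A_{j,l}={\cal O}(h^{-j})$, with a constant depending only on $p$ and $m$, since there are at most $m$ terms.

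To prove the derivative estimate I would use \eqref{eq:achecka}, which gives
\begin{equation*}
({\hat G}_{p,\Delta}^m(M_1))^{(k)}=(-1)^kk!\left(\frac{m+1}{(M_1-M_{p-2m+1})^{k+1}}+\sum_{j=2}^{p-2m}\frac{1}{(M_1-M_j)^{k+1}}\right).
\end{equation*}
The key point is that every distance $|M_1-M_j|$, $j=2,\ldots,p-2m+1$, is bounded below by a fixed multiple of $h$. For $j=p-2m+1$ the distance is exactly $h$. For the interior roots, $M_j-M_1=\tfrac{h}{2}(\mu_j+1)\ge \tfrac{h}{2}(\mu_2+1)$, and \eqref{eq:spacinga} bounds this below by $\tfrac{h}{2}\cdot\tfrac{4(m+2)}{R+2(m+2)}$. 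So $|M_1-M_j|\ge {\tilde C}(p,m)h$ for all such $j$, and therefore $({\hat G}_{p,\Delta}^m(M_1))^{(k)}={\cal O}(h^{-(k+1)})$.

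I expect this endpoint-to-interior separation to be the main (if mild) obstacle. The lemma giving \eqref{eq:space} only states separation between interior roots, so I would invoke \eqref{eq:spacinga} directly to cover the endpoint case.

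Next I would use \eqref{eq:nonzero}, $G_{p,\Delta}^m(M_1)=1$, together with the Leibniz-type recursion \eqref{eq:abarderiv1} and induct on $k$. If $(G_{p,\Delta}^m(M_1))^{(i)}={\cal O}(h^{-i})$ for all $i<k$, then each summand in \eqref{eq:abarderiv1} is ${\cal O}(h^{-i})\,{\cal O}(h^{-(k-1-i)-1})={\cal O}(h^{-k})$. This gives $(G_{p,\Delta}^m(M_1))^{(k)}={\cal O}(h^{-k})$.

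An alternative sanity check is pure scaling. Writing $x=M_1+h t$ shows that $G_{p,\Delta}^m(M_1+ht)$ is a polynomial in $t$ independent of $h$: its zeros are at $t=1$ and at $t=(\mu_j+1)/2$, and it is normalized by the value $1$ at $t=0$. Each $x$-derivative therefore contributes exactly a factor $h^{-1}$. Indeed, this argument shows $A_{j,l}=c_{j,l}(p,m)h^{-j}$ exactly.

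The $B_{j,l}$ are handled identically. I would use the analogous function $G$ built around $M_{p-2m+1}$, and the symmetric bound $1-\mu_{p-2m}\ge \tfrac{4(m+2)}{R+2(m+2)}$ from \eqref{eq:spacinga}. Running the same two inductions then yields $B_{j,l}={\cal O}(h^{-j})$.
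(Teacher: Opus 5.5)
Your proposal is correct and follows the paper's proof essentially step for step. Like the paper, you bound $({\hat G}_{p,\Delta}^m(M_1))^{(k)}={\cal O}(h^{-k-1})$ from \eqref{eq:achecka}, induct through \eqref{eq:abarderiv1} to get $(G_{p,\Delta}^m(M_1))^{(k)}={\cal O}(h^{-k})$, and then induct on $j$ through \eqref{eq:AJLa} and \eqref{eq:AJLc}. Two of your additions go slightly beyond the paper: invoking \eqref{eq:spacinga} instead of \eqref{eq:space} (which as stated only separates interior roots) correctly tightens the paper's citation for $|M_1-M_j|$, and your scaling remark giving $A_{j,l}=c_{j,l}(p,m)h^{-j}$ exactly is a valid shortcut.
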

\begin{proof}
From \eqref{eq:achecka} and \eqref{eq:space} it follows that
\begin{equation}
({\hat G}_{p,\Delta}^m(M_1))^{(k)} = {\cal O}(h^{-k-1}). \label{eq:ghato}
\end{equation}
\noindent
From \eqref{eq:nonzero} 
\begin{equation}
(G_{p,\Delta}^m(M_1))^{(k)}={\cal O}(h^{-k}), \label{eq:Gderiv}
\end{equation} holds for $k=0$.  By induction assume it is true for $k=0, 1, \ldots, K$.  Using \eqref{eq:abarderiv1} together with \eqref{eq:ghato} and the induction hypothesis yields
\begin{equation}
\vert (G_{p,\Delta}^m(M_1))^{(K+1)} \vert \le {\tilde C}(p,m) \sum_{i=0}^{K}h^{-i}h^{-K-1+i}={\cal O}(h^{-K-1}). \label{eq:go}
\end{equation}
\noindent
Equation \eqref{eq:AJLa} implies that \eqref{eq:ajl} holds for $A_{j,l}$ with $j=0$.  By induction assume \eqref{eq:ajl} holds for $A_{j,l}$ with $j=0, 1, \ldots, J$.  From \eqref{eq:AJLc}, \eqref{eq:go}, and the induction hypothesis
\begin{equation}
\vert A_{J+1,l} \vert \le {\tilde C}(p,m)\sum_{i=0}^{J}h^{-i}h^{-J+1+i} = {\cal O}(h^{-J-1}),
\end{equation}
\noindent
thus establishing \eqref{eq:ajl} for the $A_{j,l}$.  The proof for the $B_{j,l}$ follows directly.
\end{proof}
\noindent
\begin{lemma}
Let the ${\hat L}_{i}^{m,p}(x)$, ${\hat A}_{i}^{m,p}(x)$, and ${\hat B}_{i}^{m,p}(x)$ be defined by \eqref{eq:lhatdefinition}-\eqref{eq:Brequirements}.  Then
\begin{align}
({\hat L}_{i}^{m,p}(x))^{(k)}&={\cal O}(h^{-k}), \, i=2, 3, \ldots, p-2m-1, \, k=0, 1, \ldots , m+1, \label{eq:laba} \\
({\hat A}_l^{m,p}(x))^{(k)}&={\cal O}(h^{-k+l}), \, l=0, 1, \ldots, m, \, k=0, 1, \ldots, m+1, \label{eq:labb} \\
({\hat B}_l^{m,p}(x))^{(k)}&={\cal O}(h^{-k+l}), \, l=0, 1, \ldots, m, \, k=0, 1, \ldots, m+1. \label{eq:labc}
\end{align}
\end{lemma}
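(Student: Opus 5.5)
The plan is a scaling argument: each function is a product of linear factors $(x-M_j)$, with every root and every evaluation point lying in $\Delta$, so $|x-M_j|\le h$ for $x\in\Delta$. Throughout, the denominators are controlled by the separation bound \eqref{eq:space}. I will also use the analogous fact $M_j-M_1\ge {\tilde C}(p,m)h$ for $j\ge 2$ (and symmetrically at $M_{p-2m+1}$), which follows from \eqref{eq:spacinga} exactly as \eqref{eq:space} follows from \eqref{eq:spacingb}. With these, every constant depends only on $p$ and $m$, and all bounds hold uniformly for $x\in\Delta$.

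\textbf{Bound for ${\hat L}_i^{m,p}$.} By \eqref{eq:lhatdefinition}, ${\hat L}_{i,\Delta}^{m,p}$ is a product of $p$ factors of the form $(x-M_j)/(M_i-M_j)$, counted with multiplicity. Each numerator is at most $h$ in absolute value, and each denominator is at least ${\tilde C}(p,m)h$ in absolute value, so each factor is ${\cal O}(1)$. I will differentiate $k$ times with the generalized Leibniz rule. The result is a finite sum, with the number of terms depending only on $p$ and $k$. In each term exactly $k$ derivatives fall on linear factors; each such derivative replaces an ${\cal O}(1)$ factor by $1/(M_i-M_j)={\cal O}(h^{-1})$. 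Hence each term is ${\cal O}(h^{-k})$, which gives \eqref{eq:laba}.

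\textbf{Bound for ${\hat A}_l^{m,p}$.} I use the factorization \eqref{eq:hataproduct}, ${\hat A}_l^{m,p}=\frac{1}{l!}F_{l,\Delta}^mG_{p,\Delta}^m$.
\begin{itemize}
\item The same argument as above, applied to \eqref{eq:defabara}--\eqref{eq:defgbar}, gives $(G_{p,\Delta}^m(x))^{(r)}={\cal O}(h^{-r})$ uniformly on $\Delta$. At $x=M_1$ this agrees with \eqref{eq:Gderiv}.
\item For $F_{l,\Delta}^m$, Lemma \ref{forestlemma} gives $A_{j,l}={\cal O}(h^{-j})$, and $|x-M_1|^{j+l-i}\le h^{j+l-i}$ on $\Delta$. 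Substituting into \eqref{eq:ahatderivab}, every term of $(F_{l,\Delta}^m(x))^{(i)}$ is ${\cal O}(h^{-j}h^{j+l-i})={\cal O}(h^{l-i})$.
\end{itemize}
Leibniz's rule applied to \eqref{eq:ahatderivaa} then gives
\[
\sum_i\binom{k}{i}{\cal O}(h^{l-i}){\cal O}(h^{-(k-i)})={\cal O}(h^{l-k}),
\]
which is \eqref{eq:labb}. The bound \eqref{eq:labc} for ${\hat B}_l^{m,p}$ follows by symmetry, using the mirrored $B$-versions of Lemma \ref{forestlemma} and \eqref{eq:defatilde}--\eqref{eq:defgbar}.

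The main obstacle is making sure all constants are genuinely $h$-independent. Under the affine map $x=M+h\mu/2$, the points $\mu_j$ and the separation constants in Lemma \ref{separation} depend only on $p$ and $m$, so this holds. An alternative route makes it transparent: write each function as $g(2(x-M)/h)$ with $g$ a fixed polynomial on $[-1,1]$, whose coefficients are bounded for ${\hat L}$. For ${\hat A}$, the rescaled coefficients $A_{j,l}(h/2)^j$ are $h$-independent, which is the content of Lemma \ref{forestlemma}. The factor $h^{l}$ comes from $(x-M_1)^{l}$, and the chain rule then produces the $h^{-k}$. I would present this rescaling argument if the direct Leibniz bookkeeping becomes unwieldy.
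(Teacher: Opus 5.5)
Your proof is correct and partly follows a different route from the paper's.

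\textbf{The bounds for ${\hat A}_l^{m,p}$ and ${\hat B}_l^{m,p}$.} Here you essentially follow the paper. You apply Leibniz to $F_{l,\Delta}^m G_{p,\Delta}^m$ via \eqref{eq:ahatderivaa}. You get the bound ${\cal O}(h^{l-i})$ on $(F_{l,\Delta}^m)^{(i)}$ from \eqref{eq:ahatderivab} and Lemma \ref{forestlemma}, and the bound ${\cal O}(h^{-r})$ on $(G_{p,\Delta}^m)^{(r)}$. The only difference is how the $G$ bound is made uniform on $\Delta$. The paper bounds the derivatives at $M_1$ (the induction in Lemma \ref{forestlemma}) and then Taylor-expands the polynomial $G_{p,\Delta}^m$ about $M_1$. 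You bound its linear factors directly.

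\textbf{The bound for ${\hat L}_i^{m,p}$.} This is where the two routes genuinely differ. The paper uses logarithmic differentiation, $({\hat L}_i^{m,p})'={\hat L}_i^{m,p}{\tilde L}_i^{m,p}$. It asserts $({\tilde L}_i^{m,p})^{(l)}={\cal O}(h^{-l-1})$ from \eqref{eq:space} and then inducts on the derivative order. That assertion is not uniform on $\Delta$: ${\tilde L}_i^{m,p}$ has poles at the nodes $M_j\in\Delta$, and \eqref{eq:space} controls separations between nodes, not $|x-M_j|$. As written, the paper's induction therefore only gives the bound away from the nodes. Your generalized Leibniz count over the $p$ linear factors $(x-M_j)/(M_i-M_j)$ works entirely with polynomials. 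Each such factor is ${\cal O}(1)$, has derivative ${\cal O}(h^{-1})$, and has vanishing second derivative, so the ${\cal O}(h^{-k})$ bound holds at every $x\in\Delta$.

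\textbf{Endpoint denominators.} You also correctly note that the denominators $M_i-M_1$ and $M_i-M_{p-2m+1}$ require \eqref{eq:spacinga}. The paper's argument does not address this, since \eqref{eq:space} as stated only covers $i,j=2,\ldots,p-2m$.

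\textbf{The rescaling remark.} This is the cleanest version of the whole argument. The $\mu_j$ are fixed roots of ${\cal J}_{p-2m-1}^{-m-2}$. Hence every function involved is a fixed polynomial in $2(x-M)/h$, times $(h/2)^l$ in the case of ${\hat A}_l^{m,p}$ and ${\hat B}_l^{m,p}$. The bounds then follow immediately from the chain rule. This also shows that the quantitative spacing estimates of Lemma \ref{separation} are not needed here; only the distinctness of the roots matters.
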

\begin{proof}
Each term ${\hat L}_i^{m,p}(x)$ is of the form $\frac{x-\Gamma_j}{\Gamma_i-\Gamma_j}.$  Equation \eqref{eq:space} implies that
\begin{equation}
{\hat L}_i^{m,p}(x)={\cal O}(1). \label{eq:lhato}
\end{equation}  Taking the log of both sides of \eqref{eq:lhatdefinition} and then differentiating yields
\begin{equation} \label{eq:lhatprime}
{\hat L}_i^{m,p}(x)^{\prime}={\hat L}_i^{m,p}(x){\tilde L}_i^{m,p}(x),
\end{equation}
\noindent
where
\begin{equation}
{\tilde L}_i^{m,p}(x)=\left( \frac{m+1}{x-\Gamma_1}+\frac{m+1}{x-\Gamma_{p-2m+1}}+\sum_{j=2,j \ne i}^{p-2m}\frac{1}{x-\Gamma_j} \right).
\end{equation}
\noindent
Differentiating ${\tilde L}_i^{m,p}(x)$ $l$ times results in
\begin{equation}
({\tilde L}_i^{m,p}(x))^{(l)}=(-1)^ll! \left( \frac{m+1}{(x-\Gamma_1)^{l+1}}+\frac{m+1}{(x-\Gamma_{p-2m+1})^{l+1}}+\sum_{j=2,j \ne i}^{p-2m}\frac{1}{(x-\Gamma_j)^{l+1}} \right). \label{eq:ltildederiv}
\end{equation}
\noindent
Then \eqref{eq:space} and \eqref{eq:ltildederiv} imply that
\begin{equation}
({\tilde L}_i^{m,p}(x))^{(l)}={\cal O}(h^{-l-1}), \, l=0, 1, \ldots, m. \label{eq:ltildederiva}
\end{equation}
\noindent
Differentiating \eqref{eq:lhatprime} $l-1$ times yields
\begin{equation}
({\hat L}_i^{m,p}(x))^{(l)}=\sum_{j=0}^{l-1}\binom{l-1}{j}({\hat L}_i^{m,p})^{(j)}({\tilde L}_i^{m,p}(x))^{(l-1-j)}. \label{eq:lhatderiva}
\end{equation}
\noindent
Equation \eqref{eq:lhato} confirms that \eqref{eq:laba} holds for $l=0$.  By induction assume \eqref{eq:laba} holds for $l=0, 1, \ldots, L$.  Then the induction hypothesis with \eqref{eq:ltildederiva} and \eqref{eq:lhatderiva} implies that
\begin{equation}
\vert ({\hat L}_i^{m,p}(x))^{(L+1)}\vert \le {\tilde C}(p,m)\sum_{j=0}^Lh^{-j-1}h^{-L+j}={\cal O}(h^{-L-1}).
\end{equation}
\noindent
From \eqref{eq:ahatderivaa}
\begin{equation}
\vert ({\hat A}_l^{m,p}(x))^{(k)}\vert \le \frac{1}{l!}\sum_{i=0}^k\binom{k}{i}\vert (F_{l,\Delta}^m(x))^{(i)}\vert \times \vert (G_{p,\Delta}^m(x))^{(k-i)}\vert. \label{eq:ahatbound1}
\end{equation}
\noindent
Equation \eqref{eq:ahatderivab} implies that
\begin{align}
\vert (F_{l,\Delta}^m(x))^{(i)}\vert& \le \sum_{j={\rm max}(0,i-l)}^{m-l}\vert A_{j,l} \vert \frac{(j+1)!}{(j+l-i)!} \vert (x-M_1)^{j+l-i} \vert \le C(p,m)\sum_{j={\rm max}(0,i-l)}^{m-l}h^{-j}h^{j+l-i} \nonumber \\
&\le C(p,m)h^{l-i}, \label{eq:ahatbound2}
\end{align}
\noindent
using Lemma \ref{forestlemma} and Taylor's Theorem.  Taylor's Theorem and the first induction argument of Lemma \ref{forestlemma} yield
\begin{equation}
\vert (G_{p,\Delta}^m(x))^{(k-i)}\vert \le C(p,m)h^{-(k-i)}. \label{eq:ahatbound3}
\end{equation}
\noindent
Equations \eqref{eq:ahatbound1}-\eqref{eq:ahatbound3} give \eqref{eq:labb} while a comparable argument leads to \eqref{eq:labc}.
\end{proof}

\begin{lemma}
Assume that $u(x) \in C^{p+2}({\bar \Delta})$.  Then following the definitions in \eqref{eq:xisum} for any $0 \le k \le m+1,$ and $i=1, 2, 3, 4$, \, $\exists \, {\tilde C}(p,m,k)$ and ${\bar C}(p,m,k)$ s.t.
\begin{align} 
\Vert \Pi_{p+2,\Delta}^{m,i} \Vert_{k,\Delta}^2 &\le {\tilde C}(p,m,k) h^{2p-2k+5}\Vert u \Vert_{p+2}^2 ={\cal O}(h^{2p-2k+5}), \label{eq:lemmaboundsa} \\
\Vert \Pi_{p+2,\Delta}^{m} \Vert_{k,\Delta}^2 &\le {\bar C}(p,m,k) h^{2p-2k+5}\Vert u \Vert_{p+2}^2 ={\cal O}(h^{2p-2k+5}). \label{eq:lemmaboundsa1}
\end{align}
\end{lemma}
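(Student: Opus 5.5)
The approach is to bound each of the four pieces $\Pi_{p+2,\Delta}^{m,i}$, $i=1,\dots,4$, separately, by pointwise estimates of their $j$-th derivatives for $0\le j\le k$. The pointwise bounds are then integrated over $\Delta$, which contributes one factor of $h$. The second inequality \eqref{eq:lemmaboundsa1} follows at once from the first by \eqref{eq:crosstermsc}. For every term the basic principle is the same: a Taylor remainder of order $p+2$ contributes $h^{p+2}$ times $\Vert u\Vert_{p+2}$, and each derivative of a basis function costs $h^{-1}$.

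\emph{The nodal and interior terms.} These are $\Pi_{p+2,\Delta}^{m,i}$ for $i=2,3,4$.

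\begin{itemize}
\item For $i=2$: by \eqref{eq:vdefinition} with $s=0$, $v$ is the Taylor polynomial of degree $p+1$ of $u$ about $M$. Since $|M_i-M|\le h/2$, Taylor's theorem gives $|v(M_i)-u(M_i)|\le \frac{(h/2)^{p+2}}{(p+2)!}\max_{\bar\Delta}|u^{(p+2)}|$.
\item For $i=3,4$: differentiating $v-u$ $l$ times, $v^{(l)}$ is the Taylor polynomial of degree $p+1-l$ of $u^{(l)}$. Hence $|v^{(l)}(M_1)-u^{(l)}(M_1)|\le C h^{p+2-l}\max|u^{(p+2)}|$, and the same holds at $M_{p-2m+1}$.
\item These coefficient bounds are combined with \eqref{eq:laba}--\eqref{eq:labc}. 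For the $j$-th derivative this gives $h^{p+2}\cdot h^{-j}$ for the $\hat L$ terms and $h^{p+2-l}\cdot h^{-j+l}=h^{p+2-j}$ for the $\hat A,\hat B$ terms. The $h^{-l}$ loss in the coefficients is thus exactly compensated by the $h^{l}$ gain in \eqref{eq:labb}--\eqref{eq:labc}; this is the point of proving the refined bound \eqref{eq:labb}.
\item Squaring, integrating over $\Delta$, and summing over $j\le k$ (the lower-order derivatives carry higher powers of $h$, since $h\le |\Delta|$ is bounded) gives $\tilde C h^{2p-2k+5}\Vert u\Vert_{p+2}^2$.
\item The sup norm of $u^{(p+2)}$ is replaced by $\Vert u\Vert_{p+2}$ via the hypothesis $u\in C^{p+2}(\bar\Delta)$, absorbing the constant into $\tilde C$.
\end{itemize}

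\emph{The remainder term $\Pi_{p+2,\Delta}^{m,1}$.} This term contains $\hat x$, which depends on $x$ in an uncontrolled way, so it cannot be differentiated directly. I would instead identify it with the genuine Taylor remainder $u(x)-v(x)$. Its derivatives are $u^{(j)}(x)-v^{(j)}(x)$, and since $v^{(j)}$ is the Taylor polynomial of degree $p+1-j$ of $u^{(j)}$, each is bounded by $C|x-M|^{p+2-j}\max|u^{(p+2)}|\le C h^{p+2-j}\max|u^{(p+2)}|$. The integral form of the remainder gives this for every $j\le k\le m+1\le p+1$. Integrating over $\Delta$ again yields $h^{2p-2k+5}$.

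The main obstacle is this first term, which must be read as $u-v$ rather than as the literal expression with $\hat x$. A second point needing care is that the constants from \eqref{eq:laba}--\eqref{eq:labc} hold uniformly on $\bar\Delta$ and depend only on $p,m,k$. This uniformity follows from \eqref{eq:space} and the scaling of all nodes with $h$.
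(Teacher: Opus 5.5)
Your proposal is correct and follows the paper's proof. Both use the same four-way split and the same Taylor bounds of order $h^{p+2-l}$ on the nodal coefficients $v^{(l)}-u^{(l)}$, combined with \eqref{eq:laba}--\eqref{eq:labc}, followed by squaring, integrating over $\Delta$ and summing over $j\le k$. Your reading of $\Pi_{p+2,\Delta}^{m,1}$ as the genuine remainder $u-v$, with the standard remainder for its derivatives, is a cleaner version of the paper's \eqref{eq:xi2errb}.

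One step is weak, and the paper's ``Mean Value Theorem of Integrals'' step has the same weakness: trading $\max_{\bar\Delta}|u^{(p+2)}|$ for $\Vert u\Vert_{p+2}$. An $H^{p+2}$ norm does not control $\sup|u^{(p+2)}|$. With the local $H^{p+2}(\Delta)$ norm the stated inequality actually loses a factor of $h$ (take $u=(x-M)^{p+2}$). So the step is valid only for a sup-type ($C^{p+2}$) norm, although the ${\cal O}(h^{2p-2k+5})$ conclusion for fixed smooth $u$ is unaffected.
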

\begin{proof}
From the proof of Theorem \ref{interpolation} it follows that for any $0 \le j \le m+1$ and any $x \in \Delta$ $\exists \, {\hat x} \in \Delta$ s.t.
\begin{equation}
(\Pi_{p+2}^{m,1}(x))^{(j)}=\frac{u^{(p+2)}(M)}{(p+2-j)!}({\hat x}-M)^{p+2-j}. \label{eq:xi2errb}
\end{equation}
\noindent
Squaring \eqref{eq:xi2errb} and integrating over $\Delta$ yields 
\begin{align}
\vert \Pi_{p+2,\Delta}^{m,1} \vert_{j,\Delta}^2 &\le {\tilde C}(p,m,j) h^{2p-2j+5}\vert u \vert_{p+2}^2 \le {\tilde C}(p,m,j) h^{2p-2j+5}\Vert u \Vert_{p+2}^2={\cal O}(h^{2p-2j+5}), \label{eq:xi2errc}
\end{align}
\noindent
since $({\hat x}-M)^{p+2-j} \le {\tilde C}(p,m,j)h^{p+2-j}$ and $u(x) \in C^{p+2}({\bar \Delta})$.  Summing \eqref{eq:xi2errc} over $j$ from 0 to $k$ leads to \eqref{eq:lemmaboundsa} in the case $i=1$.  From \eqref{eq:vdefinition} and Taylor's Theorem
\begin{align}
\vert v(M_i)-u(M_i) \vert &\le {\tilde C}(p,m) h^{p+2}u^{(p+2)}(M_i), \, i=1, 2, \ldots, p-2m+1, \label{eq:taylora} \\
\vert v^{(l)}(M_i) -u^{(l)}(M_i) \vert&\le {\tilde C}(p,m,l)h^{p+2-l}u^{(p+2)}(M_i), \, i=1,p-2m+1, \, l=1, 2, \ldots, m, \label{eq:taylorb}
\end{align}
\noindent
or, using the Mean Value Theorem of Integrals,
\begin{align}
\vert v(M_i)-u(M_i) \vert &\le {\tilde C}(p,m) h^{p+2}\Vert u^{(p+2)}\Vert_{0,\Delta}= {\cal O}(h^{p+2}), \, i=1, 2, \ldots, p-2m+1, \label{eq:taylorc} \\
\vert v^{(l)}(M_i) -u^{(l)}(M_i) \vert&\le {\tilde C}(p,m,l)h^{p+2-l}\vert u^{(p+2)}(M_i)\vert_{l,\Delta}={\cal O}(h^{p+2-l}), \\
&i=1,p-2m+1, \, l=0, 1, \ldots, m. \label{eq:taylord}
\end{align}
\noindent
From \eqref{eq:xisum}
\begin{equation}
(\Pi_{p+2,\Delta}^{m,2}(x))^{(j)}\equiv\sum_{i=2}^{p-2m}(v(M_i)-u(M_i))({\hat L}_i^{m,p}(x))^{(j)}, \label{eq:pi22}
\end{equation}
\noindent
and thus using Cauchy-Schwarz,
\begin{align}
((\Pi_{p+2,\Delta}^{m,2}(x))^{(j)})^2&\le(p-2m-1)\sum_{i=2}^{p-2m}(v(M_i)-u(M_i))^2(({\hat L}_i^{m,p}(x))^{(j)})^2. \label{eq:xi3erra}
\end{align}
\noindent
Integrating over $\Delta$ and summing over $j=0, 1, \ldots, k,$ yields
\begin{align}
\Vert \Pi_{p+2,\Delta}^{m,2} \Vert_{k,\Delta}^2&\le \sum_{j=0}^k (p-2m-1)\sum_{i=2}^{p-2m}(v(M_i)-u(M_i))^2 \int_{\Delta}(({\hat L}_i^{m,p}(x))^{(j)})^2dx \nonumber \\
& \le {\tilde C}(p,m,k)\Vert u \Vert_{p+2}^2\sum_{j=0}^k h^{2p+4}hh^{-2j} \\
&\le {\tilde C}(p,m,k)h^{2p-2k+5}\Vert u \Vert_{p+2}^2={\cal O}(h^{2p+5-2k}),
\end{align}
\noindent\
where again Cauchy-Schwarz has been used together with \eqref{eq:laba} and \eqref{eq:taylorc}.  From \eqref{eq:xisum}
\begin{align}
(\Pi_{p+2,\Delta}^{m,3})^{(j)} &= \sum_{l=0}^{m}(v^{(l)}(M_1)-u^{(l)}(M_1))({\hat A}_l^{m,p}(x))^{(j)}.
 \label{eq:xi4erra}
\end{align}
\noindent
Applying Cauchy-Schwarz to \eqref{eq:xi4erra} as above leads to
\begin{align}
\Vert \Pi_{p+2,\Delta}^{m,3} \Vert_{k,\Delta}& \le (m+1) \sum_{j=0}^k\sum_{l=0}^m(v^{(l)}(M_1)-u^{(l)}(M_1))^2\int_{\Delta}(({\hat A}_l^{m,p}(x))^{(j)})^2dx \nonumber \\
&\le {\tilde C}(p,k,m)\Vert u \Vert_{p+2}^2 \sum_{j=0}^kh^{2p+4-2l}hh^{-2j+2l} \\
&\le {\tilde C}(p,m,k) h^{2p-2k+5}\Vert u \Vert_{p+2}^2= {\tilde C}(p,m,k)h^{2p+5-2k},
\end{align}
\noindent
where  \eqref{eq:labb} and \eqref{eq:taylord} have been used.  The same approach establishes \eqref{eq:lemmaboundsa} with $i=4$. Equation \eqref{eq:lemmaboundsa1} follows immediately.
\end{proof}

\begin{lemma}
Let
\begin{align}
{\hat J}_{p,i}^m&\equiv(-1)^i\binom{m+1
}{i}\frac{\prod_{j=0}^{2i-1}(p-j)}{\prod_{j=0}^{i-1}(2p-2j-2i+1)\prod_{j=0}^{i-1}(2p-2m-2i-1+2j)}, \, i=1, 2, \ldots m+1, \label{eq:jcoeff} \\
{\hat J}_{p,0}^m&=1, \label{eq:jcoeff0}
\end{align}
\noindent
where $\prod_{j=0}^{-1} \equiv 1$.  Then
\begin{equation} 
{\cal J}_{p}^{m}(\mu)=\sum_{i=0}^{m+1}{\hat J}_{p,i}^m{\cal J}_{p-2i}^{-1}(\mu). \label{eq:philegendrea}
\end{equation}
\end{lemma}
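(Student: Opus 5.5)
We prove \eqref{eq:philegendrea} by induction on $m$, starting at $m=-1$ and using the recurrence \eqref{eq:phi4} of Theorem \ref{bubble} as the inductive engine. For $m=-1$ the sum in \eqref{eq:philegendrea} has the single term $i=0$, and \eqref{eq:jcoeff0} gives ${\hat J}_{p,0}^{-1}=1$, so the claim is the tautology ${\cal J}_p^{-1}={\cal J}_p^{-1}$. Now assume \eqref{eq:philegendrea} holds for $m-1$, for all admissible $p$. By \eqref{eq:phi4},
\begin{equation*}
{\cal J}_p^m(\mu)={\cal J}_p^{m-1}(\mu)-c_{p,m}{\cal J}_{p-2}^{m-1}(\mu), \qquad c_{p,m}=\frac{p(p-1)}{(2p-2m-1)(2p-2m-3)}.
\end{equation*}
The hypothesis applies to ${\cal J}_{p-2}^{m-1}$ because $p \ge 2m+2$ gives $p-2 \ge 2(m-1)+2$. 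Substituting the inductive expansions and shifting the index in the second sum gives
\begin{equation*}
{\cal J}_p^m(\mu)=\sum_{i=0}^{m+1}\left({\hat J}_{p,i}^{m-1}-c_{p,m}{\hat J}_{p-2,i-1}^{m-1}\right){\cal J}_{p-2i}^{-1}(\mu),
\end{equation*}
with the conventions ${\hat J}_{p,m+1}^{m-1}=0$ (since $\binom{m}{m+1}=0$) and ${\hat J}_{p-2,-1}^{m-1}=0$. The whole proof therefore reduces to the coefficient identity
\begin{equation*}
{\hat J}_{p,i}^{m}={\hat J}_{p,i}^{m-1}-c_{p,m}{\hat J}_{p-2,i-1}^{m-1}, \qquad i=0,1,\ldots,m+1.
\end{equation*}
For $i=0$ this is $1=1$.

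For $1\le i\le m+1$ we verify the identity directly from \eqref{eq:jcoeff}. Write $N_i(p)=\prod_{j=0}^{2i-1}(p-j)$, $D_i(p)=\prod_{j=0}^{i-1}(2p-2j-2i+1)$ and $E_i^m(p)=\prod_{j=0}^{i-1}(2p-2m-2i-1+2j)$. Three bookkeeping facts make the two terms on the right comparable:
\begin{itemize}
\item $p(p-1)N_{i-1}(p-2)=N_i(p)$;
\item $(2p-2i+1)\,D_{i-1}(p-2)=D_i(p)$;
\item $E_{i-1}^{m-1}(p-2)$ and $E_i^{m-1}(p)$ are products of consecutive odd numbers over nearly the same range, differing only by factors at the ends.
\end{itemize}
Factor out $(-1)^iN_i(p)/D_i(p)$ together with the largest common block of odd factors. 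The identity then becomes a relation between a few linear factors in $p$, weighted by $\binom{m}{i}$ and $\binom{m}{i-1}$. The target weight $\binom{m+1}{i}$ should arise from Pascal's rule $\binom{m+1}{i}=\binom{m}{i}+\binom{m}{i-1}$, after the linear factors are seen to combine correctly. The extra factor $(2p-2m-1)(2p-2m-3)$ from $c_{p,m}$ should supply exactly the end factors needed to turn the $E^{m-1}$ products into $E_i^m(p)$.

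The main obstacle is this last algebraic matching. Shifting $m$ and $p$ simultaneously moves the odd products $E$ at both ends, so it is easy to misalign them. We would first check the cases $i=1$ and $i=m+1$ explicitly, for instance with $m=0,1$, where the identity can be compared against \eqref{eq:legderivb}, in order to fix the correct alignment of the factors. We would then do the general case by writing each ratio ${\hat J}^{m-1}_{p,i}/{\hat J}^m_{p,i}$ and $c_{p,m}{\hat J}^{m-1}_{p-2,i-1}/{\hat J}^m_{p,i}$ as a rational function of $p$ and showing that the two ratios sum to $1$. If the product bookkeeping proves too messy, a fallback is to compare the coefficients of $\mu^{p-2i}$ on both sides of \eqref{eq:philegendrea}. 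Since the monic ${\cal J}_q^{-1}$ are triangular in the monomial basis, the expansion coefficients are determined by the explicit form $(\mu^2-1)^{m+1}{\cal J}_{p-2m-2}^{-m-2}$ together with \eqref{eq:ultrasphericalequiv}. A second cross-check is orthogonality \eqref{eq:jortho} with $m=-1$: it gives ${\hat J}_{p,i}^m=\int_{-1}^1{\cal J}_p^m{\cal J}_{p-2i}^{-1}\,d\mu\big/\int_{-1}^1({\cal J}_{p-2i}^{-1})^2d\mu$.
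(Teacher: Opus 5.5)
Your plan follows the paper's proof: induction on $m$ driven by \eqref{eq:phi4}, reducing everything to the coefficient identity ${\hat J}_{p,i}^{m}={\hat J}_{p,i}^{m-1}-c_{p,m}{\hat J}_{p-2,i-1}^{m-1}$. Starting at $m=-1$, and absorbing the top term $i=m+1$ through $\binom{m}{m+1}=0$, are cosmetic differences. The paper instead starts at $m=0$ and splits that term off.

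The gap is that you stop exactly where the inductive step has content. You say the coefficient identity \emph{should} follow from Pascal's rule, and your description of how the odd products align is slightly off. Concretely,
\[
(2p-2m-1)(2p-2m-3)E_{i-1}^{m-1}(p-2)=(2p-2m-1)E_i^m(p)=(2p-2m-2i-1)E_i^{m-1}(p).
\]
Together with your first two bookkeeping facts, this puts both terms over $D_i(p)E_i^m(p)(2p-2m-1)$. So the factor from $c_{p,m}$ overshoots $E_i^m(p)$ by $2p-2m-1$, and the numerator must cancel that surplus. The claim becomes
\[
\binom{m}{i}(2p-2m-2i-1)+\binom{m}{i-1}(2p-2i+1)=\binom{m+1}{i}(2p-2m-1).
\]
Equivalently, $(m+1-i)(2p-2m-2i-1)+i(2p-2i+1)=(m+1)(2p-2m-1)$, which is exactly the linear identity the paper checks (with $m=\bar M+1$).

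Pascal's rule alone does not give this. Write $2p-2m-2i-1=(2p-2m-1)-2i$ and $2p-2i+1=(2p-2m-1)+2(m+1-i)$. Pascal handles the $(2p-2m-1)$ part. The leftover $-2i\binom{m}{i}+2(m+1-i)\binom{m}{i-1}$ vanishes only because $i\binom{m}{i}=(m+1-i)\binom{m}{i-1}$.

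With that check written out (it goes through for all $1\le i\le m+1$, and no denominator vanishes when $p\ge 2m+2$), your argument is complete. The fallbacks via monomial coefficients or Legendre orthogonality are not needed.
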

\begin{proof}
If $m=0$, \eqref{eq:philegendrea} simplifies to
\begin{align}
{\cal J}_p^0(\mu)&=\sum_{i=0}^1(-1)^i\binom{1}{i}\frac{\prod_{j=0}^{2i-1}(p-j)}{\prod_{j=0}^{i-1}(2p-2j-2i+1)\prod_{j=0}^{i-1}(2p-2i-1+2j)}{\cal J}_{p-2i}^{-1}(\mu) \nonumber \\
&={\cal J}_p^{-1}(\mu)-\frac{p(p-1)}{(2p-1)(2p-3)}{\cal J}_{p-2}^{-1}(\mu),
\end{align}
\noindent
which is true by \eqref{eq:phi4} with $m=0$. By induction assume \eqref{eq:jcoeff}-\eqref{eq:philegendrea} holds for $m=0, 1, \ldots, {\bar M}$.  Substituting ${\bar M}+1$ for $m$ in \eqref{eq:phi4} and using \eqref{eq:philegendrea} with ${\bar M}$ substituted for $m$ in both terms and $p-2$ in place of $p$ in the second term yields
\begin{align}
{\cal J}_p^{{\bar M}+1}(\mu)&=\sum_{i=0}^{{\bar M}+1}{\hat J}_{p,i}^{\bar M}{\cal J}_{p-2i}^{-1}(\mu)-\frac{p(p-1)}{(2p-2{\bar M}-3)(2p-2{\bar M}-5)}\sum_{i=0}^{{\bar M}+1}{\hat J}_{p-2,i}^{\bar M}{\cal J}_{p-2-2i}^{-1}(\mu). \label{eq:legendsumaa}
\end{align}
\noindent
Extracting the first term from the first sum of \eqref{eq:legendsumaa}, the last term from the second sum, then re-indexing the second sum and adding it to the first sum yields
\begin{align}
{\cal J}_p^{{\bar M}+1}(\mu)&={\hat J}_{p,0}^{\bar M}{\cal J}_p^{-1}(\mu)-\frac{p(p-1)}{(2p-2{\bar M}-3)(2p-2{\bar M}-5)}{\hat J}_{p-2,{\bar M}+1}^{\bar M}{\cal J}_{p-2{\bar M}-4}^{-1}(\mu) \nonumber \\
&+\sum_{i=1}^{{\bar M}+1}{\cal J}_{p-2i}^{-1}(\mu)\left( {\hat J}_{p,i}^{\bar M}-\frac{p(p-1)}{(2p-2{\bar M}-3)(2p-2{\bar M}-5)}{\hat J}_{p-2,i-1}^{\bar M}\right), \label{eq:legendsuma}
\end{align}
\noindent
The first term on the right in \eqref{eq:legendsuma} is the first term in \eqref{eq:philegendrea} with $m={\bar M}+1$ since ${\hat J}_{p,0}^{\bar M}={\hat J}_{p,0}^{{\bar M}+1}=1$.  Substituting \eqref{eq:jcoeff} into the second term in \eqref{eq:legendsuma} gives
\begin{align}
&-\frac{p(p-1)}{(2p-2{\bar M}-3)(2p-2{\bar M}-5)}{\hat J}_{p-2,{\bar M}+1}^{\bar M}{\cal J}_{p-2{\bar M}-4}^{-1}(\mu) \nonumber \\
&=(-1)^{{\bar M}+2}\frac{p(p-1)\prod_{j=0}^{2{\bar M}+1}(p-2-j)}{\prod_{j=0}^{\bar M}(2p-2j-2{\bar M}-5)(2p-2{\bar M}-3)(2p-2{\bar M}-5)\prod_{j=0}^{\bar M}(2p-4{\bar M}-7+2j)},
\end{align}
\noindent
which further simplifies to
\begin{align}
&=(-1)^{{\bar M}+2}\frac{\prod_{j=0}^{2{\bar M}+3}(p-j)}{\prod_{j=0}^{{\bar M}+1}(2p-2j-2{\bar M}-3)\prod_{j=0}^{{\bar M}+1}(2p-4{\bar M}-7+2j)}={\hat J}_{p,{\bar M}+2}^{{\bar M}+1}, \label{eq:legendsumab}
\end{align}
\noindent
by absorbing $p(p-1)$ into the numerator product and then re-indexing it and re-indexing both products in the denominator after absorbing into the first  product the term $2p-2{\bar M}-3$ and the term $2p-2{\bar M}-5$ into the second. Substituting \eqref{eq:jcoeff} into each term in the sum in parenthesis in \eqref{eq:legendsuma} and re-indexing the products in the numerator and denominator associated with ${\hat J}_{p-2,i-1}^{\bar M}$ results in
\begin{align}
&\left( {\hat J}_{p,i}^{\bar M}-\frac{p(p-1)}{(2p-2{\bar M}-3)(2p-2{\bar M}-5)}{\hat J}_{p-2,i-1}^{\bar M}\right) \nonumber \\
&=(-1)^i \frac{\binom{{\bar M}+1}{i}\prod_{j=0}^{2i-1}(p-j)}{\prod_{j=0}^{i-1}(2p-2j-2i+1)\prod_{j=0}^{i-1}(2p-2{\bar M}-2i-1+2j)} \nonumber \\
&+(-1)^i \frac{\binom{{\bar M}+1}{i-1}\prod_{j=0}^{2i-1}(p-j)}{\prod_{j=1}^{i-1}(2p-2j-2i+1)\prod_{j=-1}^{i-1}(2p-2{\bar M}-2i-1+2j)}, \label{eq:legendsumb}
\end{align}
\noindent
where both terms $2p-2{\bar M}-3$ and $2p-2{\bar M}-5$ have been absorbed into the second product in the second term of \eqref{eq:legendsumb}.  Equation \eqref{eq:legendsumb} can be further simplified as
\begin{align}
&=\frac{({\bar M}+2-i)(2p-2{\bar M}-2i-3)+i(2p-2i+1)}{i!({\bar M}+2)-i)!(2p-2{\bar M}-2i-3)} \nonumber \\
&\times(-1)^i\frac{({\bar M}+1)!\prod_{j=0}^{2i-1}(p-j)}{\prod_{j=0}^{i-1}(2p-2j-2i+1)\prod_{j=0}^{i-1}(2p-2{\bar M}-2i-1+2j)}, \label{eq:legendsumc}
\end{align}
\noindent
or
\begin{align}
&=(-1)^i\binom{{\bar M}+2}{i}\frac{\prod_{j=0}^{2i-1}(p-j)(2p-2{\bar M}-3)}{\prod_{j=0}^{i-1}(2p-2j-2i+1)\prod_{j=1}^i(2p-2{\bar M}-2i-3+2j)(2p-2{\bar M}-2i-3)}, \label{eq:legendsumd}
\end{align}
\noindent
 since $({\bar M}+2-i)(2p-2{\bar M}-2i-3)+i(2p-2i+1)=({\bar M}+2)(2p-2{\bar M}-3)$, 
Then noting that the last term in the second product in the denominator of \eqref{eq:legendsumd} with $j=i$ is $2p-2{\bar M}-3$ and that when $j=0$, $2p-2{\bar M}-2i-3+2j=2p-2{\bar M}-2i-3$, \eqref{eq:legendsumd} further simplifies to
\begin{align}
&=(-1)^i\binom{{\bar M}+2}{i}\frac{\prod_{j=0}^{2i-1}(p-j)}{\prod_{j=0}^{i-1}(2p-2j-2i+1)\prod_{j=0}^{i-1}(2p-2{\bar M}-2i-3+2j)} ={\hat J}_{p,i}^{{\bar M}+1},
\end{align}
\noindent
which together with \eqref{eq:legendsuma} and \eqref{eq:legendsumab} completes the proof.
\end{proof}

\noindent
Remark: Equations \eqref{eq:jcoeff}-\eqref{eq:philegendrea} can be used to calculate the values of ${\cal J}_p^m(\mu)$ at quadrature points.

\begin{lemma}
The $k$-seminorm of $\Upsilon_{p+1,\Delta}^m(x)$ satisfies
\begin{align}
&\int_{M-h/2}^{M+h/2}(\Upsilon_{p+1,\Delta}^m(\mu))^{(k)})^2d\mu \nonumber \\
&=\left( \frac{h}{2}\right)^{2p-2k+3} \left( \frac{p!}{(p-k)!}\right)^2 \sum_{i=0}^{m-k+1}({\hat J}_{p-k,i}^{m-k})^2(J_{-1,p-k-2i})^2\frac{2}{2(p-k-2i)+1}={\cal O}(h^{2p-2k+3}). \label{eq:Upsilonerr}
\end{align}
\end{lemma}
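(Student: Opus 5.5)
Change variables $x=M+h\mu/2$ and use \eqref{eq:defpsi}, which gives $\Upsilon_{p+1,\Delta}^m(x)=(h/2)^{p+1}{\cal J}_{p+1}^m(\mu)$. Each $x$-derivative contributes a factor $2/h$, so
\begin{equation*}
(\Upsilon_{p+1,\Delta}^m(x))^{(k)}=\left(\frac{h}{2}\right)^{p+1-k}({\cal J}_{p+1}^m)^{(k)}(\mu).
\end{equation*}
Since $dx=(h/2)d\mu$, the integral equals
\begin{equation*}
\left(\frac{h}{2}\right)^{2p-2k+3}\int_{-1}^1\bigl(({\cal J}_{p+1}^m)^{(k)}(\mu)\bigr)^2d\mu .
\end{equation*}
This already accounts for the power of $h$ in \eqref{eq:Upsilonerr}, including the ${\cal O}(h^{2p-2k+3})$ conclusion, because the remaining integral is a constant depending only on $p,m,k$. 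What is left is to evaluate that constant exactly.

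For the derivatives, apply \eqref{eq:jintprime} $k$ times. It gives
\begin{equation*}
({\cal J}_{p+1}^m)^{(k)}=\frac{(p+1)!}{(p+1-k)!}\,{\cal J}_{p+1-k}^{m-k}.
\end{equation*}
This stays inside the family, with the lower index descending possibly to negative values. For $k\le m+1$ we have $m-k\ge -1$, so the expansion \eqref{eq:philegendrea} applies with $p$ replaced by $p+1-k$ and $m$ by $m-k$. It writes ${\cal J}_{p+1-k}^{m-k}$ as $\sum_{i=0}^{m-k+1}{\hat J}_{p+1-k,i}^{m-k}{\cal J}_{p+1-k-2i}^{-1}$, a combination of monic Legendre polynomials.

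The Legendre polynomials are orthogonal on $[-1,1]$ with unit weight, so all cross terms vanish. By \eqref{eq:JNormalize} with $m=-1$ (where the binomial factor is trivial), ${\cal J}_q^{-1}=J_{-1,q}P_q$. Together with $\int_{-1}^1P_q^2=2/(2q+1)$ this gives
\begin{equation*}
\int_{-1}^1\bigl({\cal J}_q^{-1}\bigr)^2d\mu=\frac{2\,J_{-1,q}^2}{2q+1}.
\end{equation*}
Squaring the prefactor $(p+1)!/(p+1-k)!$ and summing over $i$ yields the stated formula.

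The step I expect to need the most care is index bookkeeping, since the statement is written with $p!/(p-k)!$, ${\hat J}_{p-k,i}^{m-k}$ and $J_{-1,p-k-2i}$ rather than the shifted indices above. I would first check whether the lemma intends $\Upsilon_{p+1}$ read with $p$ as the degree of $\Upsilon$, i.e.\ an off-by-one convention. If so, I would replace $p+1$ by $p$ throughout and confirm that the prefactor exponent $2p-2k+3$ remains consistent. I would also confirm that \eqref{eq:philegendrea} remains valid at the endpoint case $m-k=-1$, where it reduces to the trivial identity ${\cal J}_q^{-1}={\cal J}_q^{-1}$ with ${\hat J}_{q,0}^{-1}=1$. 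Finally, I would check that the iterated use of \eqref{eq:jintprime} is legitimate at each step: this needs $p+1-j\ge 2(m-j)+2$ for $j<k$, which follows from $p\ge 2m+2$.
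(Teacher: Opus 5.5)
Your argument follows the paper's proof: rescale to $[-1,1]$ to extract $(h/2)^{2p-2k+3}$, differentiate ${\cal J}_{p+1}^m$ $k$ times down the family to get $\frac{(p+1)!}{(p+1-k)!}{\cal J}_{p+1-k}^{m-k}$, expand with \eqref{eq:philegendrea}, and use Legendre orthogonality. Your version is tidier than the paper's in three respects. The relation that drives the derivative step is \eqref{eq:jintprime}, as you say; the paper cites \eqref{eq:jderiv}, which concerns the negative-index family. You square the factorial prefactor, which the paper's final display leaves unsquared. Your index check only needs $p\ge 2m+1$, so it also covers the nodal-only case $p=2m+1$.

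The one thing to correct is your last step. What your computation establishes is
\begin{equation*}
\int_{M-h/2}^{M+h/2}\bigl((\Upsilon_{p+1,\Delta}^m)^{(k)}\bigr)^2dx=\left(\frac{h}{2}\right)^{2p-2k+3}\left(\frac{(p+1)!}{(p+1-k)!}\right)^2\sum_{i=0}^{m-k+1}({\hat J}_{p+1-k,i}^{m-k})^2(J_{-1,p+1-k-2i})^2\frac{2}{2(p+1-k-2i)+1}.
\end{equation*}
This is not the printed formula. The off-by-one reading you contemplate cannot reconcile the two: for a degree-$p$ polynomial $(h/2)^p{\cal J}_p^m$ the power of $h$ would be $2p-2k+1$. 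The printed constants and the printed exponent therefore refer to different degrees, and the statement as written is false.

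For instance, take $m=0$, $p=2$, $k=1$. Then $(\Upsilon_{3,\Delta}^0)'(x)=(h/2)^2(3\mu^2-1)$ with $\mu=2(x-M)/h$, so the integral is $(h/2)^5\int_{-1}^1(3\mu^2-1)^2d\mu=\frac{8}{5}(h/2)^5$. Your formula gives $(h/2)^5\cdot 9\cdot\frac{4}{9}\cdot\frac{2}{5}=\frac{8}{5}(h/2)^5$. The printed one gives $(h/2)^5\cdot 4\cdot 1\cdot\frac{2}{3}=\frac{8}{3}(h/2)^5$.

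So you should conclude that the statement has an index typo, with $p$ replaced by $p+1$ in $p!/(p-k)!$, ${\hat J}_{p-k,i}^{m-k}$, $J_{-1,p-k-2i}$ and $2/(2(p-k-2i)+1)$, rather than claim you recover it. The shifted version is what the paper's own proof arrives at, and it is what is used afterwards in \eqref{eq:Pi1err} and Theorem \ref{mainresult}. The ${\cal O}(h^{2p-2k+3})$ conclusion is unaffected.
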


\begin{proof}
Taking $k$ derivatives of \eqref{eq:defUpsilon} with $i=p+1$ using \eqref{eq:jderiv} yields
\begin{equation}
(\Upsilon_{p+1,M}^m(x))^{(k)}=\left( \frac{h}{2}\right)^{p+1-k}({\cal J}_{p+1}^m(2(x-M)/h))^{(k)}. \label{eq:Upsilona}
\end{equation}
\noindent
Squaring \eqref{eq:Upsilona} and integrating over $\Delta$ results in
\begin{equation}
\int_{M-h/2}^{M+h/2} ((\Upsilon_{p+1,M}^m(x))^{(k)})^2dx=\left( \frac{h}{2}\right)^{2p-2k+3} \int_{-1}^1(({\cal J}_{p+1}^m(\mu))^{(k)})^2d\mu. \label{eq:Upsilonb}
\end{equation}
\noindent
To evaluate the right side of \eqref{eq:Upsilonb} first differentiate \eqref{eq:philegendrea} $k$ times and use \eqref{eq:jderiv} to obtain
\begin{equation}
({\cal J}_{p+1}^m(\mu))^{(k)}=(p+1)p \cdots (p-k+2){\cal J}_{p-k+2}^{m-k} (\mu) \nonumber=\frac{(p+1)!}{(p-k+1)!}\sum_{i=0}^{m-k+1}{\hat J}_{p-k+1,i}^{m-k}{\cal J}_{p-k+1-2i}^{-1}(\mu). \label{eq:Upsilonc}
\end{equation}
\noindent
and thus from \eqref{eq:JNormalize} with $m=-1$,
\begin{align}
\int_{M-h/2}^{M+h/2}& ((\Upsilon_{p+1,M}^m(x))^{(k)})^2dx=\left( \frac{h}{2}\right)^{2p-2k+3} \frac{(p+1)!}{(p-k+1)!}\int_{-1}^1\left(\sum_{i=0}^{m-k+1}{\hat J}_{p-k+1,i}^{m-k}{\cal J}_{p-k+1-2i}^{-1}(\mu)\right)^2d\mu \nonumber \\
&=\left( \frac{h}{2}\right)^{2p-2k+3} \frac{(p+1)!}{(p-k+1)!}\sum_{i=0}^{m-k+1}({\hat J}_{p-k+1,i}^{m-k})^2(J_{-1,p-k+1-2i})^2\frac{2}{2(p-k+1-2i)+1},
\end{align}
\noindent
since the Legendre polynomials are orthogonal and $\int_{-1}^1 (P_p(\mu))^2d\mu=\frac{2}{2p+1}$.
\end{proof}

\begin{lemma}
Let $u(x) \in C^{p+2}({\bar \Delta})$.  Then
\begin{align}
\Vert \Pi_{p+1,\Delta}^m \Vert_{k,\Delta}^2&=\left(\frac{u^{p+1}(M)}{(p-k+1)!}\right)^2 \left( \frac{h}{2}\right)^{2p-2k+3}  \sum_{i=0}^{m-k+1}\frac{2({\hat J}_{p-k+1,i}^{m-k})^2(J_{-1,p-k+1-2i})^2}{2p-2k-4i+3} \nonumber \\
&+O(h^{2p-2k+5})={\cal O}(h^{2p-2k+3}). \label{eq:Pi1err}
\end{align}
\end{lemma}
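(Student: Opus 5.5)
Since $\Pi_{p+1,\Delta}^m(x)=\frac{u^{(p+1)}(M)}{(p+1)!}\Upsilon_{p+1,\Delta}^m(x)$ by \eqref{eq:theorem1a}, and $u^{(p+1)}(M)$ is a constant on $\Delta$, the $H^k$ norm factors exactly:
\begin{equation*}
\Vert \Pi_{p+1,\Delta}^m \Vert_{k,\Delta}^2=\left(\frac{u^{(p+1)}(M)}{(p+1)!}\right)^2\sum_{j=0}^{k}\vert \Upsilon_{p+1,\Delta}^m\vert_{j,\Delta}^2 .
\end{equation*}
The plan is to evaluate each seminorm with the preceding lemma (equation \eqref{eq:Upsilonerr}, in the corrected form obtained at the end of its proof, i.e.\ with prefactor $\bigl((p+1)!/(p-j+1)!\bigr)^2$ and coefficients ${\hat J}_{p-j+1,i}^{m-j}$, $J_{-1,p-j+1-2i}$). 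For this I would first check that squaring the factor $\frac{(p+1)!}{(p-j+1)!}$ coming from \eqref{eq:Upsilonc} is done correctly. Then I would use the orthogonality of the ${\cal J}_q^{-1}=J_{-1,q}P_q$ (monic Legendre polynomials) together with \eqref{eq:philegendrea} to get the diagonal sum.

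Multiplying by $\bigl(u^{(p+1)}(M)/(p+1)!\bigr)^2$, the $(p+1)!$ cancels. The $j$-th term therefore equals
\begin{equation*}
\left(\frac{u^{(p+1)}(M)}{(p-j+1)!}\right)^2\left(\frac h2\right)^{2p-2j+3}\sum_{i=0}^{m-j+1}\frac{2({\hat J}_{p-j+1,i}^{m-j})^2(J_{-1,p-j+1-2i})^2}{2p-2j-4i+3}.
\end{equation*}
The top term $j=k$ is precisely the displayed leading term in \eqref{eq:Pi1err}. The terms $j=0,\ldots,k-1$ carry powers $h^{2p-2j+3}$ with $2p-2j+3\ge 2p-2k+5$. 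Their coefficients depend only on $p,m,j$ and on $u^{(p+1)}(M)$, which is bounded since $u\in C^{p+2}(\bar\Delta)$. Hence these terms are ${\cal O}(h^{2p-2k+5})$, giving the stated remainder. The final equality ${\cal O}(h^{2p-2k+3})$ is then immediate because the leading coefficient is $h$-independent.

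The main point needing care is the range of validity when $j$ exceeds $m+1$ relative to the indices, and the case $j\le m+1$ where $m-j\ge -1$. The formula \eqref{eq:philegendrea} was proved by induction starting from $m=0$. For $m-j=-1$ it must be read with the sum consisting of the single term $i=0$, ${\hat J}^{-1}_{q,0}=1$, which is consistent with ${\cal J}^{-1}_q$ itself. I would verify this edge case explicitly before summing. Apart from that, the argument is a direct bookkeeping of constants, since no Taylor remainder enters $\Pi_{p+1,\Delta}^m$.
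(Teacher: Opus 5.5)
Your proposal is correct and follows the paper's proof: you write $\Pi_{p+1,\Delta}^m=\frac{u^{(p+1)}(M)}{(p+1)!}\Upsilon_{p+1,\Delta}^m$, evaluate each seminorm $\vert\Pi_{p+1,\Delta}^m\vert_{j,\Delta}^2$, $j\le k$, from the preceding lemma, and sum over $j$. Beyond the paper's argument, you make explicit two points it leaves implicit: the preceding lemma has to be used in its corrected form (prefactor $\bigl((p+1)!/(p-j+1)!\bigr)^2$, indices $p-j+1$), and the seminorms with $j<k$ are exactly what makes up the ${\cal O}(h^{2p-2k+5})$ remainder.
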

\begin{proof}
From \eqref{eq:theorem1a} and \eqref{eq:Upsilonerr} it follows for $j \le k$ that
\begin{align}
\vert \Pi_{p+1,\Delta}^m \vert_{j,\Delta}^2&=\left(\frac{u^{p+1}(M)}{(p-j+1)!}\right)^2 \left( \frac{h}{2}\right)^{2p-2j+3} \sum_{i=0}^{m-j+1}\frac{2({\hat J}_{p-j+1,i}^{m-j})^2(J_{-1,p-j+1-2i})^2}{2p-2j-4i+3}.
\end{align}
Since
\begin{equation}
\Vert \Pi_{p+1,\Delta}^m \Vert_{k,\Delta}^2=\sum_{j=0}^k \vert \Pi_{p+1,\Delta}^m \vert_{j,\Delta}^2,
\end{equation}
\noindent
the result follows.
\end{proof}
\noindent

\textit{Proof (of Theorem \ref{mainresult}).}
From \eqref{eq:theorem1a}
\begin{align}
&(u-W_{p,\Delta}^m,u-W_{p,\Delta}^m)_{k,\Delta}^2=(\Pi_{p+1,\Delta}^m+\Pi_{p+2,\Delta}^m,\Pi_{p+1,\Delta}^m+\Pi_{p+1,\Delta}^m)_{k,\Delta} \nonumber \\
&=(\Pi_{p+1,\Delta}^m,\Pi_{p+1,\Delta}^m)_{k,\Delta}+(\Pi_{p+2,\Delta}^m,\Pi_{p+2,\Delta}^m)_{k,\Delta}+2(\Pi_{p+1,\Delta}^m,\Pi_{p+2,\Delta}^m)_{k,\Delta} \label{eq:theorem2}
\end{align}
\noindent
Bounds on the first two terms on the right-hand side of the second step of \eqref{eq:theorem2} come from \eqref{eq:Pi1err} and \eqref{eq:lemmaboundsa1}, respectively.  The bound on the third term follows from \eqref{eq:Pi1err} and \eqref{eq:lemmaboundsa1} together with Cauchy-Schwarz.
\qed

\noindent
Remark: This theorem generalizes Corollary 2.6 of \cite{M2001} and Corollary 4 of \cite{MR2010}.

From the lemmas and Theorem \ref{interpolation}, point-wise superconvergence can now be demonstrated.
\begin{corollary}
Let $u(x) \in C^{p+3}({\bar \Delta})$ and \eqref{eq:defpsi}-\eqref{eq:Brequirements} hold.  For each $k$, $k=1, \ldots, m+1$, let $x_{j,k}$,  $j=1, \ldots, p+1-k$, be the roots of  $(\Upsilon_{p+1,\Delta}^m(x))^{(k)}$.  Then it follows that
\begin{equation}
\vert (u(x)-W_{p,\Delta}^m(x))^{(k)} \vert_{x=x_{j,k}} \le Ch^{p+2-k}. \label{eq:nodalsuperconvergencea}
\end{equation}
\end{corollary}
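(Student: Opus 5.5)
The plan is to take the error representation of Theorem \ref{interpolation} with $s=0$, in the split form \eqref{eq:theorem1a},
\begin{equation*}
u(x)-W_{p,\Delta}^m(x)=\frac{u^{(p+1)}(M)}{(p+1)!}\Upsilon_{p+1,\Delta}^m(x)+\Pi_{p+2,\Delta}^m(x),
\end{equation*}
differentiate it $k$ times, and evaluate at a root $x_{j,k}$ of $(\Upsilon_{p+1,\Delta}^m)^{(k)}$. The leading term $\frac{u^{(p+1)}(M)}{(p+1)!}(\Upsilon_{p+1,\Delta}^m)^{(k)}(x_{j,k})$ vanishes by the choice of the points. So the whole statement reduces to a pointwise bound $\vert(\Pi_{p+2,\Delta}^m)^{(k)}(x)\vert\le Ch^{p+2-k}$, uniform for $x\in\Delta$. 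That the roots lie in $\Delta$ and number $p+1-k$ follows from Rolle's theorem, since $\Upsilon_{p+1,\Delta}^m$ has $p+1$ roots counted with multiplicity in $\bar\Delta$.

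To bound $(\Pi_{p+2,\Delta}^m)^{(k)}$ pointwise, I would treat the four pieces $\Pi_{p+2,\Delta}^{m,i}$ of \eqref{eq:xisum} separately.

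\textbf{Pieces $i=2,3,4$.} These are finite combinations of the generalized Lagrange functions ${\hat L}_i^{m,p}$, ${\hat A}_l^{m,p}$, ${\hat B}_l^{m,p}$, with coefficients $v^{(l)}(M_i)-u^{(l)}(M_i)$. By \eqref{eq:taylora}--\eqref{eq:taylorb} these coefficients are $O(h^{p+2-l})$, where the constant involves $\max_{\bar\Delta}\vert u^{(p+2)}\vert$. By \eqref{eq:laba}--\eqref{eq:labc}, the $k$-th derivatives of the basis functions are $O(h^{-k+l})$ uniformly in $x$. Each product is therefore $O(h^{p+2-k})$, and summing a fixed number of terms preserves this order.

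\textbf{Piece $i=1$.} This is the Taylor remainder $u(x)-v(x)$. Here I would not differentiate the form involving ${\hat x}$ directly, because ${\hat x}$ depends on $x$. Instead I would write $(u-v)^{(k)}(x)=u^{(k)}(x)-\sum_{i\le p+1}u^{(i)}(M)(x-M)^{i-k}/(i-k)!$. This is exactly the Taylor remainder of $u^{(k)}$ about $M$ of order $p+2-k$, so it is bounded by $\max\vert u^{(p+2)}\vert\,(h/2)^{p+2-k}/(p+2-k)!$. This is the same content as \eqref{eq:xi2errb}.

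The main obstacle is being careful that the bounds of the preceding lemmas are genuinely uniform pointwise in $x$ and not only in norm. They must also hold for $k=m+1$, where \eqref{eq:laba}--\eqref{eq:labc} were stated up to $m+1$. I would check that the constants depend only on $p$ and $m$ through the root separation \eqref{eq:space}, which scales with $h$. I would also note that the extra smoothness $C^{p+3}$ is more than needed; $C^{p+2}$ suffices, and the extra derivative merely permits using the $s=1$ form if one prefers to expand $\Pi_{p+2}$ further. Combining the four bounds gives \eqref{eq:nodalsuperconvergencea} with $C$ depending on $p$, $m$, $k$ and $\Vert u\Vert_{C^{p+2}(\bar\Delta)}$.
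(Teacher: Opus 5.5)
Your proposal is correct and follows essentially the same route as the paper. The paper differentiates the split \eqref{eq:theorem1a} $k$ times, uses the vanishing of $(\Upsilon_{p+1,\Delta}^m)^{(k)}$ at the $x_{j,k}$, and bounds the remaining pieces pointwise via \eqref{eq:taylora}--\eqref{eq:taylorb}, \eqref{eq:laba}--\eqref{eq:labc} and \eqref{eq:xi2errb}. Your refinements are all sound: treating $(u-v)^{(k)}$ directly as the Taylor remainder of $u^{(k)}$, using Rolle's theorem to place the $x_{j,k}$ in $\bar\Delta$, and observing that $C^{p+2}$ suffices. The cleanest way to confirm the pointwise uniformity of \eqref{eq:laba}--\eqref{eq:labc} is affine scaling to the reference interval.
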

\begin{proof}
Equation \eqref{eq:nodalsuperconvergencea} follows from \eqref{eq:theorem1a}, \eqref{eq:laba}-\eqref{eq:labc}, \eqref{eq:xi2errb}, \eqref{eq:taylora}-\eqref{eq:taylorb}, \eqref{eq:pi22}, and \eqref{eq:xi4erra}.
\end{proof}
\noindent
Thus, the error of the $k^{th}$ derivative of the Hermite-Jacobi interpolant at the roots of $(\Upsilon_{p+1,\Delta}^m(x))^{(k)}$ is one order higher than the expected pointwise error and thus exhibits pointwise superconvergence. This implies the counter-intuitive result that the number of superconvergence points increases by one for every increase in the order of the derivative.

In the case $k=m+1$ the interpolant error simplifies considerably.
\begin{corollary}
Let \eqref{eq:leftspline}-\eqref{eq:Brequirements} hold and let $u(x) \in C^{p+2}({\bar \Delta})$.  Then if $k=m+1$,
\begin{equation}
\Vert u-W_{p,\Delta}^m \Vert_{m+1,\Delta}^2=\frac{h^{2p-2m+1}}{2p-2m+1}(u^{(p+1)}(M))^2\left(\frac{(p-m)!}{(2p-2m)!}\right)^2+{\cal O}(h^{2p-2m+3}). \label{eq:m1error}
\end{equation}
\end{corollary}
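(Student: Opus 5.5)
The plan is to obtain \eqref{eq:m1error} as the special case $k=m+1$ of Theorem \ref{mainresult}, evaluating the single surviving term in closed form. Nothing new beyond that theorem and its supporting lemmas is needed. Most of the work is bookkeeping: collapse the sum, evaluate the normalising constant $J_{-1,\cdot}$, and check that every other contribution is absorbed into the remainder.

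\emph{Step 1: collapse the sum.} Put $k=m+1$ in \eqref{eq:deltaerr}. The sum over $i$ runs from $0$ to $m-k+1=0$, so only the $i=0$ term survives. By \eqref{eq:jcoeff0}, ${\hat J}_{p-m,0}^{-1}=1$. The leading term is therefore
\begin{equation*}
\left(\frac{h}{2}\right)^{2p-2m+1}\left(\frac{u^{(p+1)}(M)}{(p-m)!}\right)^2 (J_{-1,p-m})^2\,\frac{2}{2p-2m+1}.
\end{equation*}
This is consistent with \eqref{eq:Upsilonc}. For $k=m+1$ the $(m+1)$st derivative of ${\cal J}_{p+1}^m$ is a multiple of ${\cal J}_{p-m}^{-1}$, which is a monic Legendre polynomial. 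Its $L^2(-1,1)$ norm is then obtained directly from $\int_{-1}^1 P_n^2\,d\mu = 2/(2n+1)$, with no cross terms.

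\emph{Step 2: evaluate $J_{-1,p-m}$.} From \eqref{eq:h2} with $m=-1$ and $p$ replaced by $p-m$,
\begin{equation*}
J_{-1,p-m}=\frac{2^{p-m-1}(p-m-1)!\,(p-m)!}{(2p-2m-1)!}=\frac{2^{p-m}((p-m)!)^2}{(2p-2m)!}.
\end{equation*}
The second form follows by multiplying numerator and denominator by $2(p-m)$. Substituting this into the leading term from Step 1 gives
\begin{equation*}
\left(\frac{h}{2}\right)^{2p-2m+1}\frac{2^{2p-2m+1}}{2p-2m+1}\,\frac{((p-m)!)^2}{((2p-2m)!)^2}\,(u^{(p+1)}(M))^2 .
\end{equation*}
The powers of $2$ cancel exactly, which yields the displayed main term of \eqref{eq:m1error}.

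\emph{Step 3: check the remainder and the lower-order seminorms.} This is the step I expect to require the most care. It is not difficult, but the leading term in \eqref{eq:deltaerr} is really the top seminorm $\vert\cdot\vert_{m+1,\Delta}^2$, whereas the full norm $\Vert\cdot\Vert_{m+1,\Delta}^2$ also contains the seminorms $\vert\Pi_{p+1,\Delta}^m\vert_{j,\Delta}^2$ for $j\le m$.
\begin{itemize}
\item By \eqref{eq:Upsilonerr} and \eqref{eq:Pi1err}, each seminorm with $j\le m$ is ${\cal O}(h^{2p-2j+3})\subseteq{\cal O}(h^{2p-2m+3})$, because the exponent increases as $j$ decreases.
\item The terms from $\Pi_{p+2,\Delta}^m$ are ${\cal O}(h^{2p+5-2k})={\cal O}(h^{2p-2m+3})$ by \eqref{eq:lemmaboundsa1}.
\item The cross term is bounded by Cauchy--Schwarz, as in the proof of Theorem \ref{mainresult}. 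It is at most ${\cal O}(h^{(2p-2m+1)/2})\cdot{\cal O}(h^{(2p-2m+3)/2})={\cal O}(h^{2p-2m+2})$.
\end{itemize}
The cross-term estimate is the delicate point, because it is half a power short of the claimed ${\cal O}(h^{2p-2m+3})$. I would close this gap by expanding $(\Pi_{p+1,\Delta}^m,\Pi_{p+2,\Delta}^m)_{m+1,\Delta}$ more carefully. Take $s=1$ in \eqref{eq:theorem1}: the next term is $\Upsilon_{p+1,\Delta}^m\Psi_{1,\Delta}^m\,u^{(p+2)}(M)/(p+2)!$, with $\Psi_1^m=x-M$ odd about $M$. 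Its $(m+1)$st derivative has parity opposite to that of $(\Upsilon_{p+1,\Delta}^m)^{(m+1)}$, so its leading cross contribution integrates to zero over the symmetric interval $\Delta$. The remaining pieces are then ${\cal O}(h^{2p-2m+3})$.

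Collecting the three steps gives \eqref{eq:m1error}.
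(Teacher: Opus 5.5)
Your Steps 1 and 2 are exactly the paper's proof. It specializes Theorem \ref{mainresult} to $k=m+1$, keeps the single $i=0$ term, and evaluates $J_{-1,\cdot}$ by direct calculation. Your indices are the correct ones. The paper writes ${\hat J}^{-1}_{p-m-1,0}$ and $J_{-1,p-m-1}$, but the value it quotes, $2^{p-m-1}(p-m-1)!(p-m)!/(2p-2m-1)!$, is $J_{-1,p-m}$ by \eqref{eq:h2}. Your rewriting as $2^{p-m}((p-m)!)^2/(2p-2m)!$ and the cancellation of the powers of $2$ are right.

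Step 3 goes beyond the paper, which simply inherits the remainder of Theorem \ref{mainresult}. You are right that the Cauchy--Schwarz bound used there for the cross term gives only ${\cal O}(h^{2p-2m+2})$. That is one full power of $h$ short in the squared norm, not half a power, so the paper's own argument does not deliver ${\cal O}(h^{2p-2m+3})$.

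Your parity repair is sound. We have $\Psi_1^m=x-M$, and ${\cal J}^m_{p+1}$ has parity $(-1)^{p+1}$. Hence $\int_\Delta (\Upsilon^m_{p+1,\Delta})^{(j)}((x-M)\Upsilon^m_{p+1,\Delta})^{(j)}dx=0$ for every $j\le m+1$. What remains is ${\cal O}(h^{p-m+1/2})\,{\cal O}(h^{p-m+5/2})$.

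But be explicit about what this costs. Taking $s=1$ in \eqref{eq:theorem1}, and bounding the new remainder as in \eqref{eq:lemmaboundsa1} one order higher, uses $u^{(p+3)}$. So you need $u\in C^{p+3}(\bar\Delta)$, or at least Lipschitz $u^{(p+2)}$, rather than the stated $C^{p+2}$. Under $C^{p+2}$ alone the cross term is in general only $o(h^{2p-2m+2})$, and the corollary as written actually fails. A counterexample:
\begin{itemize}
\item Take $m=0$, $p=1$ and $u=(x-M)^2+|x-M|^{3+\alpha}$ with $0<\alpha<1$, so $u\in C^3$ but $u'''$ is not Lipschitz.
\item The interpolant is constant, since $u(M\pm h/2)$ agree.
\item A direct computation gives $\Vert u-W\Vert_{1,\Delta}^2=h^3/3+\frac{8(3+\alpha)}{4+\alpha}(h/2)^{4+\alpha}+{\cal O}(h^5)$.
\item The leading term $h^3/3$ matches \eqref{eq:m1error}, but the remainder is not ${\cal O}(h^5)$.
\end{itemize}
So your argument proves the corollary under the strengthened hypothesis, and you should state that hypothesis.
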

\begin{proof}
If $k=m+1$ the sum in \eqref{eq:deltaerr} reduces to one term with $i=0$.  Since ${\hat J}_{p-m-1,0}^{-1}=1$ and $J_{-1,p-m-1}=\frac{2^{p-m-1}(p-m-1)!(p-m)!}{(2p-2m-1)!}$ the result follows by direct calculation.
\end{proof}

\noindent
Remark: This corollary generalizes Theorem 4.2 of \cite{M2001} and Corollary 5 of \cite{MR2010}.

Consider now the domain $\Omega$ and partition it into $N$ intervals $\Omega_j=(x_{j-1},x_j), j=1, 2, \ldots, N$, letting
\begin{align}
\Gamma_{\Omega} & \equiv \{ a=x_0 < x_1 < \cdots < x_N=b \}.
\end{align}
\noindent
Furthermore, let $h_j=x_j-x_{j-1}$ and $m_j=(x_{j-1}+x_j)/2$, be the interval (element) lengths and midpoints of $\Omega_j, j=1, 2, \ldots , N$, respectively, with $H={\rm max}_j h_j$, $\beta H={\rm min}_jh_j$ where $\beta$ is bounded from below independently of $N$. Let $S^{p,m,\Gamma_{\Omega}}$ be the space of piecewise $C^{m+1}(\Gamma_{\Omega})$ polynomials of degree $p \ge 2m+1$, $j=1, 2, \ldots, N$.  Consider $W_{p,m,\Gamma_{\Omega}} \in S^{p,m,\Gamma_{\Omega}}$
\begin{align}
W_{p,m,\Gamma_{\Omega}}&=\begin{cases}
\sum_{j=1}^{N+1}\sum_{l=0}^mW_{l,j}\Phi_{l,j}^m(x) & p=2m+1, \\
\sum_{j=1}^{N+1}\sum_{l=0}^mW_{l,j}\Phi_{l,j}^m(x)+\sum_{j=1}^N\sum_{i=2m+2}^p W_{i,j}\Upsilon_{i,j}^m(x) & p \ge 2m+2,
\end{cases} \label{eq:HLOMEGAa}
\end{align}
\noindent
where 
\begin{equation}
\Phi_{l,j}^m(x)=\begin{cases}
\Phi_{l,L,m_j}^m(x) & x \in \Omega_j, \\
\Phi_{l,R,m_{j-1}}^m(x) & x \in \Omega_{j-1}, \\
0 & {\rm otherwise},
\end{cases} \label{eq:HLOMEGAb}
\end{equation} 
\noindent
and
\begin{equation}
\Upsilon_{i,j}^m(x)=\begin{cases}
\Upsilon_{i,\Omega_j}^m(x) & x \in \Omega_j, \\
0 & {\rm otherwise}.
\end{cases} \label{eq:HLOMEGAc}
\end{equation}
\noindent
The $W_{i,j}$ are determined by applying \eqref{eq:left}-\eqref{eq:middle} to each element $\Omega_j, j=1, 2, \ldots , N$.  

\begin{theorem} \label{maintheorem2}
If $u(x) \in C^{p+2}({\bar {\Omega}})$
\begin{align}
&\Vert u-W_{p,m,\Gamma_{\Omega}} \Vert_{k,\Omega}^2 \nonumber \\
&=\sum_{j=1}^N\left( \frac{h_j}{2}\right)^{2p-2k+2}\left( \frac{u^{(p+1)}(m_j)}{(p+1)!}\right)^2 \left( \frac{p!}{(p-k)!}\right)^2 \sum_{i=0}^{m-k+1}({\hat J}_{p-k,i}^{m-k})^2(J_{-1,p-k-2i})^2\frac{2}{2(p-k-2i)+1} \nonumber \\
&+{\cal O}(H^{2p+3-2k}) \\
&\le{\tilde C}(p,m)H^{2p-2k+2}\Vert u \Vert_{p+1}^2+{\cal O}(H^{2p+3-2k}), k \le m+1. \label{eq:deltaerr1} \\
\end{align}
\end{theorem}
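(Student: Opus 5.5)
The plan is to reduce the global estimate to the single-element result, Theorem \ref{mainresult}, by summing over elements. By construction \eqref{eq:HLOMEGAa}--\eqref{eq:HLOMEGAc}, the restriction of $W_{p,m,\Gamma_{\Omega}}$ to each $\Omega_j$ coincides with the local interpolant $W_{p,\Omega_j}^m$ of \eqref{eq:interpolant}. This holds because the nodal coefficients $W_{l,j}=u^{(l)}(x_j)$ are shared by adjacent elements, and the Hermite splines of Theorem \ref{continuity} reproduce exactly these values and derivatives from both sides. Consequently $u-W_{p,m,\Gamma_{\Omega}}\in H^{m+1}(\Omega)$, with $C^m$ continuity across the nodes, so for $k\le m+1$ the $H^k(\Omega)$ norm contains no interface contributions and splits additively:
\begin{equation*}
\Vert u-W_{p,m,\Gamma_{\Omega}} \Vert_{k,\Omega}^2=\sum_{j=1}^N \Vert u-W_{p,\Omega_j}^m \Vert_{k,\Omega_j}^2 .
\end{equation*}

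On each $\Omega_j$, with $M=m_j$ and $h=h_j$, I would apply Theorem \ref{mainresult}, or more precisely the decomposition \eqref{eq:theorem2} together with \eqref{eq:Pi1err}, \eqref{eq:Upsilonerr} and \eqref{eq:lemmaboundsa1}. The leading term on that element is the squared $H^k$ norm of $\Pi_{p+1,\Omega_j}^m=\Upsilon_{p+1,\Omega_j}^m\,u^{(p+1)}(m_j)/(p+1)!$. Its highest seminorm is evaluated exactly by \eqref{eq:Upsilonerr}, and this produces the displayed coefficient $(p!/(p-k)!)^2\sum_i(\cdots)$. The lower seminorms $j<k$ carry higher powers of $h_j$ and are absorbed into the remainder. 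The remainder on each element is ${\cal O}(h_j^{2p-2k+4})$ pointwise in the constants. To see this, note that the cross term is bounded by Cauchy--Schwarz as ${\cal O}(h_j^{p-k+3/2}h_j^{p-k+5/2})$, and the $\Pi_{p+2}$ term is ${\cal O}(h_j^{2p-2k+5})$.

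Summing over $j$ then converts local powers into global ones. Since $N\le (b-a)/(\beta H)$, a sum of $N$ terms of size ${\cal O}(H^{2p-2k+4})$ is ${\cal O}(H^{2p-2k+3})$. The same counting shows that the leading sum has size ${\cal O}(H^{2p-2k+2})$ once the local $h_j$ power is compared with the statement's exponent $2p-2k+2$. This comparison requires reconciling the extra factor $h_j$, the Jacobian of the map to $[-1,1]$, with the statement's normalization, and it is a bookkeeping step I would check carefully against \eqref{eq:Upsilonb}. For the final inequality, I would bound $(u^{(p+1)}(m_j))^2$ by $\max|u^{(p+1)}|^2$, or alternatively via a mean-value argument by $h_j^{-1}\int_{\Omega_j}(u^{(p+1)})^2$ plus a Taylor correction of higher order. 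I would then use $h_j\le H$ and collect the $p,m$-dependent constants into ${\tilde C}(p,m)$ to obtain $\Vert u\Vert_{p+1}^2$.

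The main obstacle is the uniformity of the remainder constants across elements. The ${\cal O}$ terms in Lemmas \ref{forestlemma} and following depend on $h$ only through scaling, via \eqref{eq:space}, whose constant ${\tilde C}(p,m)$ is independent of the element. They also depend on $\Vert u\Vert_{C^{p+2}}$ on that element, which is bounded globally. I would make this explicit by mapping each element to the reference interval, so that all constants are reference-element constants times a power of $h_j$. The quasi-uniformity assumption $\beta H\le h_j$ is what allows the element count $N$ to be traded for one power of $H$.
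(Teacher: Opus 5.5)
Your route is the paper's: apply \eqref{eq:theorem2} (with \eqref{eq:Pi1err} and \eqref{eq:lemmaboundsa1}) on each $\Omega_j$, sum using $N\le (b-a)/(\beta H)$, and reach $\Vert u\Vert_{p+1}$ through the mean value theorem for integrals. Your remainder count is sound, and more careful than the paper's. Cauchy--Schwarz on the cross term gives ${\cal O}(h_j^{2p-2k+4})$ per element. That is what the paper's own argument delivers, not the ${\cal O}(h^{2p+5-2k})$ asserted in Theorem~\ref{mainresult}, and it sums to exactly the stated ${\cal O}(H^{2p+3-2k})$. One minor normalization you inherit from the paper: with \eqref{eq:leftspline}--\eqref{eq:rightspline}, the $l$-th $x$-derivative of a mapped spline at its node is $h^{-l}$, not $1$. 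So the nodal functions need a factor $h_j^{l}$ on $\Omega_j$ and $h_{j-1}^{l}$ on $\Omega_{j-1}$ before a shared coefficient $u^{(l)}$ at their common node gives $C^m$ continuity and the local interpolant on each element.

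The real problem is the step you defer. It cannot be closed in favour of the statement, because the displayed equality is misprinted. The Jacobian factor you spotted is really there: by \eqref{eq:Upsilonb} each element contributes $(h_j/2)^{2p-2k+3}$. With the printed exponent $2p-2k+2$, the leading sum is a sum of $N\sim H^{-1}$ terms of size $H^{2p-2k+2}$. It is therefore generically of order $H^{2p-2k+1}$, which contradicts the final inequality.

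The constant is also shifted by one index. Quoting \eqref{eq:Upsilonerr} reproduces it only because that display carries the same shift. Its own derivation yields $((p+1)!/(p-k+1)!)^2$, ${\hat J}_{p-k+1,i}^{m-k}$, $J_{-1,p-k+1-2i}$ and $2/(2p-2k-4i+3)$. For instance, take $m=0$, $p=2$, $k=1$, so that $\Upsilon_{3,\Delta}^0(x)=(x-M)^3-(h/2)^2(x-M)$. Then $\int_\Delta((\Upsilon_{3,\Delta}^0)')^2dx=\frac{8}{5}(h/2)^5$, whereas \eqref{eq:Upsilonerr} gives $\frac{8}{3}(h/2)^5$.

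What your argument does prove is the corrected statement. There, $\Vert u-W_{p,m,\Gamma_{\Omega}}\Vert_{k,\Omega}^2$ equals the sum over $j$ of the element formula of Theorem~\ref{mainresult} with $M=m_j$, $h=h_j$, plus ${\cal O}(H^{2p+3-2k})$. The inequality then follows as you propose, by summing $(h_j/2)^{2p-2k+3}(u^{(p+1)}(m_j))^2\le\frac{1}{2}(H/2)^{2p-2k+2}\bigl(\int_{\Omega_j}(u^{(p+1)})^2dx+{\cal O}(h_j^2)\bigr)$ over $j$. The paper's one-line proof passes over both misprints.
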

\begin{proof}
The equality follows by applying \eqref{eq:theorem2} to each interval $\Omega_j$, $j=1, 2, \ldots, N$, the fact that the bound on $\beta$ is independent of $N$ and $N$ is proportional to $1/H$.  The boundedness of $u^{p+1}(x)$ on $\Omega$ and the Mean Value of Integrals establishes the inequality.
\end{proof}

\noindent
Remark: This theorem generalizes Corollary 5 of \cite{MR2010}.

\section{Asymptotic Equivalence}

Let $u(x) \in C_0^{p+2}({\bar \Omega})$ be the solution of the linear equation
\begin{align}
f(x)&=\sum_{k=1}^{m+1}((-1)^kp_k(x)u^{(k)}(x))^{(k)}+p_0(x)u(x), \, x \in \Omega, \label{eq:classicalsolna} \\
&u^{(k)}(a)=u^{(k)}(b)=0, \, k=0, 1, \ldots, m, \label{eq:classicalsolnc}
\end{align}
\noindent
where $f(x), p_k(x) \in C^p({\bar \Omega}),  \, k=0, 1, \ldots, m+1$ and $p_{m+1}(x) > 0$ with $P_{\rm max}=\max_{k \in \{0, 1, \ldots, m+1\},x \in \Omega} |p_k(x) |$ and $p_{\rm max}^{\prime}=\max_{x \in \Omega}| p_{m+1}^{\prime}(x) |$.  Then $u(x)$ satisfies the weak form
\begin{align}
(f,v)&=\sum_{k=0}^{m+1}\sum_{j=1}^N(p_k(x)u(x),v(x))_{k,\Omega_j} \equiv  \sum_{k=0}^{m+1}a_k(u,v) \equiv a(u,v), \, \forall v \in H_0^{p}(\Omega) \label{eq:solna}
\end{align}
\noindent
together with \eqref{eq:classicalsolnc}.  The finite element approximation to $u(x)$, $U_{p,m,\Gamma_{\Omega}}(x) \in S_0^{p,m,\Gamma_{\Omega}}$, satisfies
\begin{align}
(f,V_{p,m,\Gamma_{\Delta}})&=\sum_{k=0}^{m+1}a_k(U_{p,m,\Gamma_{\Omega}},V_{p,m,\Gamma_{\Omega}})=a(U_{p,m,\Gamma_{\Omega}},V_{p,m,\Gamma_{\Omega}}), 
\, \forall V_{p,m,\Gamma_{\Omega}} \in S_0^{p,m,\Gamma_{\Omega}}. \label{eq:UFEMa}
\end{align}

First it will be necessary to derive an extension of the Poincar\'e inequality.
\begin{lemma} \label{poincare}
Let $g \in C_0^{m+1}({\bar \Omega})$ with $g^{(k)}(a)=g^{(k)}(b)=0$, $k=0, 1, \ldots m$.  Then $\exists \, {\tilde C}(k,\vert \Omega \vert)$ s.t.
\begin{equation}
{\tilde C}(k,\vert\Omega\vert)\Vert g \Vert_{k} \le \vert g \vert_{k}, \, k=1, 2, \ldots m+1. \label{eq:semibound}
\end{equation}
\end{lemma}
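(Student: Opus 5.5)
The plan is to prove the one-dimensional Poincar\'e (Friedrichs) inequality for functions vanishing at $a$, and then iterate it over the derivative orders. The vanishing of all derivatives up to order $m$ at the endpoint $a$ is what makes the iteration possible. Throughout, $\vert \Omega \vert = b-a$.

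\textbf{Basic one-step estimate.} Let $w \in C^1(\bar\Omega)$ with $w(a)=0$. Then $w(x)=\int_a^x w'(s)\,ds$. By Cauchy--Schwarz,
\[
w(x)^2 \le (x-a)\int_a^b (w')^2\,ds .
\]
Integrating over $\Omega$ gives
\[
\Vert w \Vert_0^2 \le \frac{\vert\Omega\vert^2}{2}\,\vert w \vert_1^2 .
\]

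\textbf{Applying it to the derivatives of $g$.} Fix $k$ with $1 \le k \le m+1$ and take $j$ with $0 \le j \le k-1$. Then $j \le m$, so $g^{(j)}(a)=0$. Applying the one-step estimate to $w=g^{(j)}$ gives
\[
\vert g \vert_j^2 \le \frac{\vert\Omega\vert^2}{2}\,\vert g \vert_{j+1}^2 .
\]
Chaining these inequalities upward from $j$ to $k$ yields
\[
\vert g \vert_j^2 \le \left(\frac{\vert\Omega\vert^2}{2}\right)^{k-j}\vert g \vert_k^2 .
\]
This chain is legitimate because every intermediate derivative $g^{(i)}$ with $j \le i \le k-1 \le m$ also vanishes at $a$.

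\textbf{Summing to the full norm.} Sum over $j=0,\ldots,k$ in $\Vert g \Vert_k^2=\sum_{j=0}^k \vert g \vert_j^2$ (the $j=k$ term contributes $\vert g\vert_k^2$ with factor $1$). This gives
\[
\Vert g \Vert_k^2 \le \sum_{j=0}^{k}\left(\frac{\vert\Omega\vert^2}{2}\right)^{k-j}\vert g \vert_k^2 .
\]
Hence \eqref{eq:semibound} holds with
\[
{\tilde C}(k,\vert\Omega\vert)=\left(\sum_{i=0}^{k}(\vert\Omega\vert^2/2)^i\right)^{-1/2},
\]
which is positive and depends only on $k$ and $\vert\Omega\vert$.

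There is no serious obstacle. The only points needing care are that only the conditions at one endpoint are used, so the conditions at $b$ are superfluous, and that $k \le m+1$ is exactly what guarantees $g^{(j)}(a)=0$ for every $j<k$. The regularity $g\in C^{m+1}$ justifies the fundamental theorem of calculus at each step.
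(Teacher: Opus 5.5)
Your proof is correct and takes essentially the same route as the paper. Both apply the fundamental theorem of calculus and Cauchy--Schwarz to each $g^{(j)}$, which vanishes at $a$ for $j\le m$, and then propagate the one-step bound up the derivative orders; the only differences are cosmetic (you chain and sum directly, with the sharper factor $\vert\Omega\vert^2/2$, to get an explicit constant, while the paper inducts on $k$ with a recursively defined constant).
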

\begin{proof}
From the Fundamental Theorem of Calculus
\begin{equation}
g(x)=\int_a^xg^{\prime}(z)dz, \label{eq:ftc}
\end{equation}
\noindent
since $g(a)=0$.  Applying Cauchy-Schwarz to \eqref{eq:ftc} yields
\begin{equation}
| g(x) | \le \left( \int_a^x(g^{\prime}(z))^2\right)^{1/2}\sqrt{b-a}. \label{eq:ftca}
\end{equation}
\noindent
Squaring \eqref{eq:ftca} and integrating over $[a,b]$ results in
\begin{equation}
\int_a^bg^2(x)dx \le (b-a)^2\int_a^b(g^{\prime}(z))^2dz,
\end{equation}
\noindent
that is
\begin{equation}
\Vert g \Vert_0^2 \le {\hat C}(\vert \Omega \vert)\vert g \vert_1^2,
\end{equation}
\noindent
where ${\hat C}(\vert \Omega \vert)=(b-a)^2$.  Consequently
\begin{equation}
\Vert g \Vert_1^2=\Vert g \Vert_0^2+\vert g \vert_1^2 \le ({\hat C}(\vert \Omega \vert)+1)\vert g \vert_1^2, \label{eq:ftcf}
\end{equation}
\noindent
so that \eqref{eq:semibound} holds for $k=1$ with ${\tilde C}(\vert \Omega \vert)=\sqrt{1/(1+{\hat C}(\vert \Omega \vert))}$.  By induction assume \eqref{eq:semibound} holds for $k=1, 2, \ldots , K$.  Using the sequence of steps \eqref{eq:ftc}-\eqref{eq:ftcf} with $g^{(K)}(x)$ in place of $g(x)$ gives
\begin{equation}
\vert g \vert_{K}^2 \le {\bar C}(\vert \Omega \vert) \vert g \vert_{K+1}^2. \label{eq:ftcd}
\end{equation}
\noindent
Equation \eqref{eq:ftcd} together with the induction hypothesis implies that
\begin{equation}
({\tilde C}(K,\vert\Omega\vert))^2 \Vert g \Vert_{K}^2 \le \vert g \vert_{K}^2 \le {\bar C}(\vert \Omega \vert)\vert g \vert_{K+1}^2,
\end{equation}
\noindent
or
\begin{equation}
\Vert g \Vert_{K+1}^2 \le \left(1+\frac{{\bar C}(\vert \Omega \vert)}{{\tilde C}(K,\vert \Omega \vert)^2}\right)\vert g \vert_{K+1}^2,
\end{equation}
yielding \eqref{eq:semibound} with ${\tilde C}(K+1,\vert \Omega \vert)=\sqrt{1/(1+{\bar C}(\vert \Omega \vert)/{\tilde C}(K,\vert \Omega \vert)^2)}$.
\end{proof}
\begin{lemma}
Let $u,v \in H_0^{m+1}(\Omega)$.  Then $ \exists \, C(P_{\rm max},m)$ and ${\tilde C}(m,\vert \Omega \vert)$ s.t.
\begin{align}
\vert a(u,v) \vert& \le C(P_{\rm max},m)\Vert u \Vert_{m+1} \Vert v \Vert_{m+1}, \label{eq:energy1} \\
\Vert u \Vert_{m+1}^2& \le {\tilde C}(m,\vert \Omega \vert) a(u,u). \label{eq:energy2}
\end{align}
\end{lemma}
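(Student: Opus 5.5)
Continuity \eqref{eq:energy1} and coercivity \eqref{eq:energy2} of $a(\cdot,\cdot)$ on $H_0^{m+1}(\Omega)$ are handled separately, using only Cauchy--Schwarz, the bounds on the coefficients $p_k$, and the extended Poincar\'e inequality of Lemma \ref{poincare}. Throughout I read $a_k(u,v)=\int_\Omega p_k u^{(k)}v^{(k)}\,dx$, summed over the elements $\Omega_j$, with signs fixed so that each term is nonnegative when $v=u$. This is the form that arises from \eqref{eq:classicalsolna} after integrating by parts $k$ times, with the boundary terms killed by $v\in H_0^{m+1}(\Omega)$.

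For \eqref{eq:energy1}, bound each term by $|a_k(u,v)|\le P_{\rm max}\,|u|_{k}|v|_{k}$ using $|p_k(x)|\le P_{\rm max}$ and Cauchy--Schwarz in $L^2(\Omega)$. Since $|w|_k\le \Vert w\Vert_{m+1}$ for $k\le m+1$, summing over $k=0,\dots,m+1$ gives $|a(u,v)|\le (m+2)P_{\rm max}\Vert u\Vert_{m+1}\Vert v\Vert_{m+1}$. So $C(P_{\rm max},m)=(m+2)P_{\rm max}$. For $u,v$ only in $H_0^{m+1}$, the same bound follows by density of smooth functions satisfying the boundary conditions of Lemma \ref{poincare}.

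For \eqref{eq:energy2}, start from Lemma \ref{poincare} with $k=m+1$:
\[
\Vert u\Vert_{m+1}^2\le {\tilde C}(m+1,|\Omega|)^{-2}\,|u|_{m+1}^2 .
\]
Next write
\[
a(u,u)=\int_\Omega p_{m+1}(u^{(m+1)})^2\,dx+\sum_{k=0}^{m}a_k(u,u).
\]
The leading term is at least $p_{\min}|u|_{m+1}^2$, where $p_{\min}=\min_{\bar\Omega}p_{m+1}>0$ by continuity and compactness. The lower-order terms must then be shown not to destroy this positivity. The simplest route is to assume, as is implicit in the weak form, that $p_k\ge 0$ for $k\le m$ after the sign convention. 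Then $a(u,u)\ge p_{\min}|u|_{m+1}^2$, and \eqref{eq:energy2} follows with constant ${\tilde C}(m+1,|\Omega|)^{-2}/p_{\min}$, which I would absorb into ${\tilde C}(m,|\Omega|)$.

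The main obstacle is exactly the control of the lower-order terms when the $p_k$, $k\le m$, are not sign-definite. Here I would first try the chain of one-step Poincar\'e bounds from the proof of Lemma \ref{poincare}: $|u|_{k}^2\le (b-a)^2|u|_{k+1}^2$, valid since $u^{(k)}(a)=0$ for $k\le m$. Iterating gives $|a_k(u,u)|\le P_{\rm max}(b-a)^{2(m+1-k)}|u|_{m+1}^2$. This absorbs the lower-order terms provided $\sum_{k\le m}P_{\rm max}(b-a)^{2(m+1-k)}<p_{\min}$, a smallness condition that I would state as a standing hypothesis on the coefficients. Without such a condition, coercivity genuinely fails in general: a negative $p_0$ can create a kernel. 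Otherwise one would need a G\aa rding inequality together with uniqueness of the solution, which is assumed in the problem setup, and a compactness argument.
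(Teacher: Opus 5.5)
Your proof of \eqref{eq:energy1} is the paper's. You bound each $a_k(u,v)$ by $P_{\rm max}|u|_k|v|_k$ and use $|w|_k\le\Vert w\Vert_{m+1}$, which gives $C=(m+2)P_{\rm max}$. Cauchy--Schwarz applies directly in $H^{m+1}$, so your density remark belongs instead in the coercivity part, where Lemma \ref{poincare} is only stated for $C^{m+1}$ functions.

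For \eqref{eq:energy2} the paper says only that it ``follows from Lemma \ref{poincare}''. That is just your first step, and you are right that it is not enough. With only $p_{m+1}>0$ assumed and a constant depending only on $m$ and $|\Omega|$, the inequality is false:
\begin{itemize}
\item Taking $p_{m+1}\equiv\epsilon$ and $p_k\equiv 0$ for $k\le m$ forces the constant to grow like $1/\epsilon$.
\item For $m=0$, $\Omega=(0,\pi)$, $p_1=1$, $p_0=-2$, one gets $a(\sin x,\sin x)=-\pi/2<0$, although the boundary value problem is uniquely solvable.
\end{itemize}
So a sign or smallness hypothesis on the lower-order coefficients is genuinely needed. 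Under either one, your argument is complete.

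There are three corrections to your write-up.
\begin{itemize}
\item You cannot ``absorb'' $1/p_{\min}$ into ${\tilde C}(m,|\Omega|)$. The first example shows that dependence on $\min_{\bar\Omega}p_{m+1}$ is unavoidable, so state it explicitly.
\item The G{\aa}rding-plus-uniqueness fallback cannot deliver \eqref{eq:energy2} at all, because the second example is uniquely solvable yet indefinite. That route gives only an inf--sup condition, plus quasi-optimality of the Galerkin solution for $H$ sufficiently small via a Schatz-type argument. The later error estimates would then have to be rebuilt on that weaker stability.
\item In your smallness condition only the negative parts of $p_k$, $k\le m$, matter. Using $P_{\rm max}$, which also bounds $p_{m+1}$, makes the condition needlessly restrictive.
\end{itemize}
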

\begin{proof}
\begin{align}
| a(u,v) |& \le \sum_{k=0}^{m+1}| a_k(u,v) | \le \sum_{k=0}^{m+1}\max_{x \in \Omega}|p_k(x) | |u|_{k}|v|_{k} \nonumber \\
&\le P_{\rm max}\sum_{k=0}^{m+1}\| u \|_{m+1}\| v \|_{m+1} \le C(P_{\rm max},m)\| u \|_{m+1}\| v \|_{m+1}.
\end{align}
Equation \eqref{eq:energy2} follows from Lemma \ref{poincare}.
\end{proof}

\begin{theorem}
If $u(x) \in C_0^{p+2}(\Omega)$ satisfy \eqref{eq:classicalsolnc}-\eqref{eq:solna}
and let $U_{p,m,\Gamma_{\Omega} }\in S_0^{p,m,\Gamma_{\Omega}}$ satisfy \eqref{eq:UFEMa}.  Then $\exists \,{\tilde C}(p,m,P_{\rm max}, \vert \Omega \vert)$ s.t.
\begin{equation}
\Vert u-U_{p,m,\Gamma_{\Omega}} \Vert_{m+1} \le {\tilde C}(p,m,P_{\rm max},\vert \Omega \vert) H^{p-m}\Vert u \Vert_{p+1}. \label{eq:femerr}
\end{equation}
\end{theorem}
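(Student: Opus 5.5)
This is the standard Céa-lemma argument: Galerkin orthogonality, continuity and coercivity of $a(\cdot,\cdot)$, and the interpolation estimate of Theorem \ref{maintheorem2}.

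\emph{Galerkin orthogonality.} Subtracting \eqref{eq:UFEMa} from \eqref{eq:solna}, restricted to test functions $V\in S_0^{p,m,\Gamma_{\Omega}}$, gives
\begin{equation}
a(u-U_{p,m,\Gamma_{\Omega}},V)=0 \quad \forall V \in S_0^{p,m,\Gamma_{\Omega}}.
\end{equation}
This step requires $S_0^{p,m,\Gamma_{\Omega}}\subset H_0^{m+1}(\Omega)$. That inclusion holds because the Hermite splines, by Theorem \ref{continuity}, give $C^m$ continuity across element boundaries, so piecewise polynomials with these nodal matchings lie in $H^{m+1}(\Omega)$. The zero boundary data \eqref{eq:classicalsolnc} then place them in $H_0^{m+1}$.

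\emph{Céa estimate.} Set $e=u-U_{p,m,\Gamma_{\Omega}}$. Take $W=W_{p,m,\Gamma_{\Omega}}$, the global Hermite--Jacobi interpolant of $u$. Its nodal values are $u^{(l)}(a)=u^{(l)}(b)=0$, so $W\in S_0^{p,m,\Gamma_{\Omega}}$ and $U_{p,m,\Gamma_{\Omega}}-W$ is an admissible test function. Using \eqref{eq:energy2}, orthogonality, and then \eqref{eq:energy1},
\begin{equation}
\Vert e\Vert_{m+1}^2 \le {\tilde C}(m,\vert\Omega\vert)\,a(e,e)={\tilde C}(m,\vert\Omega\vert)\,a(e,u-W)\le {\tilde C}(m,\vert\Omega\vert)C(P_{\rm max},m)\Vert e\Vert_{m+1}\Vert u-W\Vert_{m+1}.
\end{equation}
Dividing by $\Vert e\Vert_{m+1}$ gives $\Vert e\Vert_{m+1}\le \tilde C\,\Vert u-W\Vert_{m+1}$.

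\emph{Interpolation bound.} Apply Theorem \ref{maintheorem2} with $k=m+1$:
\begin{equation}
\Vert u-W\Vert_{m+1}^2\le \tilde C(p,m)H^{2p-2m}\Vert u\Vert_{p+1}^2+{\cal O}(H^{2p-2m+1}).
\end{equation}
Taking square roots gives the $H^{p-m}\Vert u\Vert_{p+1}$ rate in \eqref{eq:femerr}.

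\emph{Main obstacle.} The hard part is \eqref{eq:energy2}, i.e.\ coercivity of $a$. Lemma \ref{poincare} controls $\Vert e\Vert_{m+1}$ by $\vert e\vert_{m+1}$, and $p_{m+1}>0$ bounds $a_{m+1}(e,e)$ below by $\min p_{m+1}\,\vert e\vert_{m+1}^2$. The lower-order terms $a_k$, $k\le m$, have no sign, however. I would impose (or read into the paper's hypotheses) the assumption that they are nonnegative, or that they are dominated via Lemma \ref{poincare}: $\sum_{k\le m}\vert a_k(e,e)\vert\le P_{\rm max}\sum_{k\le m}\vert e\vert_k^2$, which can be absorbed into the leading term when $P_{\rm max}$ is small relative to $\min p_{m+1}$ and the Poincaré constants. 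Failing that, I would invoke a Gårding inequality plus uniqueness of the solution of the continuous problem, which gives quasi-optimality for $H$ sufficiently small.

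A secondary point is that the ${\cal O}(H^{2p-2m+1})$ remainder must also be bounded by a multiple of $H^{2p-2m}\Vert u\Vert_{p+1}^2$. Since $u\in C^{p+2}$, this holds with a constant depending on $\Vert u\Vert_{C^{p+2}}$ for $H\le 1$, and that constant is absorbed into $\tilde C$.
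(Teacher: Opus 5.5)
Your argument is correct and is essentially the paper's proof: Galerkin orthogonality turns $a(u-U_{p,m,\Gamma_{\Omega}},u-U_{p,m,\Gamma_{\Omega}})$ into $a(u-U_{p,m,\Gamma_{\Omega}},u-W_{p,m,\Gamma_{\Omega}})$. The paper then bounds this from below with \eqref{eq:energy2}, and from above with \eqref{eq:energy1} and Theorem \ref{maintheorem2} at $k=m+1$, exactly as you do. The two caveats you flag are real but are passed over silently in the paper too: coercivity when the lower-order $p_k$ have no sign, and the ${\cal O}(H^{2p-2m+1})$ remainder, which makes your constant depend on $u$ rather than only on $p,m,P_{\rm max},\vert\Omega\vert$; the second could be removed by using a Bramble--Hilbert bound for the polynomial-reproducing Hermite--Jacobi interpolant in place of Theorem \ref{maintheorem2}.
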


\begin{proof}
From \eqref{eq:UFEMa} it follows that
\begin{align}
a(u-U_{p,m,\Gamma_{\Omega}},u-U_{p,m,\Gamma_{\Omega}})&=a(u-U_{p,m,\Gamma_{\Omega}},u-W_{p,m,\Gamma_{\Omega}}+W_{p,m,\Gamma_{\Omega}}-U_{p,m,\Gamma_{\Omega}}) \nonumber \\
&=a(u-U_{p,m,\Gamma_{\Omega}},u-W_{p,m,\Gamma_{\Omega}}). \label{eq:hmp1error}
\end{align}
\noindent
Using \eqref{eq:hmp1error} together with Theorem \ref{maintheorem2} and \eqref{eq:energy2} results in
\begin{align}
C(m,\vert \Omega \vert)&\Vert u-U_{p,m,\Gamma_{\Omega}} \Vert_{m+1}^2 \le a(u-U_{p,m,\Gamma_{\Omega}},u-U_{p,m,\Gamma_{\Omega}}) \nonumber \\
&\le {\tilde C}(p,m,P_{\rm max})H^{p-m}\Vert u \Vert_{p+1}\Vert u-U_{p,m,\Gamma_{\Omega}} \Vert_{m+1}.
\end{align}
\end{proof}

\begin{theorem}
Let $u(x) \in C_0^{p+2}(\Omega)$ satisfy \eqref{eq:solna}
together with \eqref{eq:classicalsolnc}, let $W_{p,m,\Gamma_{\Omega}} \in S_0^{p,m,\Gamma_{\Omega}}$ be the Hermite-Jacobi interpolant \eqref{eq:HLOMEGAa}-\eqref{eq:HLOMEGAc} and let $U_{p,m,\Gamma_{\Omega}} \in S_0^{p,m,\Gamma_{\Omega}}$ be the finite element solution of \eqref{eq:UFEMa}.  Then $\exists \, {\tilde C}(p,m,P_{\rm max},p_{\rm max}^{\prime},\vert \Omega \vert)$ s.t.
\begin{equation}
\Vert W_{p,m,\Gamma_{\Omega}}-U_{p,m,\Gamma_{\Omega}} \Vert_{m+1,\Omega} \le {\tilde C}(p,m,P_{\rm max},p_{\rm max}^{\prime},\vert \Omega \vert)\Vert u \Vert_{p+2}H^{p-m+1}. \label{eq:superconvergence}
\end{equation}
\end{theorem}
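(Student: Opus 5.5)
Set $E = W_{p,m,\Gamma_{\Omega}} - U_{p,m,\Gamma_{\Omega}} \in S_0^{p,m,\Gamma_{\Omega}}$. Then $E$ satisfies the homogeneous boundary conditions \eqref{eq:classicalsolnc}, so the coercivity estimate \eqref{eq:energy2} applies. Galerkin orthogonality \eqref{eq:UFEMa} gives $a(u-U_{p,m,\Gamma_{\Omega}},E)=0$, and hence
\begin{equation}
{\tilde C}(m,\vert\Omega\vert)^{-1}\Vert E\Vert_{m+1}^2 \le a(E,E) = a(W_{p,m,\Gamma_{\Omega}}-u,E) = \sum_{k=0}^{m+1}\sum_{j=1}^N \int_{\Omega_j} p_k\,(W_{p,m,\Gamma_{\Omega}}-u)^{(k)}E^{(k)}\,dx .
\end{equation}
It therefore suffices to show $\vert a(W_{p,m,\Gamma_{\Omega}}-u,E)\vert \le C H^{p-m+1}\Vert u\Vert_{p+2}\Vert E\Vert_{m+1}$ and then divide by $\Vert E\Vert_{m+1}$. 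A direct use of \eqref{eq:energy1} with Theorem \ref{maintheorem2} only yields $H^{p-m}$, so the extra power of $H$ has to come from cancellation in the leading term.

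For the lower-order terms $k\le m$, I would apply Cauchy--Schwarz elementwise and use Theorem \ref{maintheorem2} with $k\le m$. This gives $\vert u-W\vert_{k}\le C H^{p-k+1}\Vert u\Vert_{p+1}\le CH^{p-m+1}\Vert u\Vert_{p+1}$, which is already of the required order.

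The main term is $k=m+1$. On each $\Omega_j$ I would use \eqref{eq:theorem1a}: $u-W=\Pi_{p+1,\Omega_j}^m+\Pi_{p+2,\Omega_j}^m$. The $\Pi_{p+2}$ part is handled by \eqref{eq:lemmaboundsa1} with $k=m+1$. That bound gives $\vert\Pi_{p+2}\vert_{m+1,\Omega_j}^2=O(h_j^{2p-2m+3})$, and summing over elements gives order $H^{p-m+1}\Vert u\Vert_{p+2}$. The remaining piece is
\begin{equation}
\sum_j \frac{u^{(p+1)}(m_j)}{(p+1)!}\int_{\Omega_j} p_{m+1}(x)\,(\Upsilon_{p+1,\Omega_j}^m)^{(m+1)}(x)\,E^{(m+1)}(x)\,dx .
\end{equation}
On $\Omega_j$ I would write $p_{m+1}(x)=p_{m+1}(m_j)+O(h_j\,p'_{\max})$; this is where the constant's dependence on $p_{\rm max}^{\prime}$ enters. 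The $O(h_j)$ remainder times $\vert\Upsilon^{(m+1)}\vert_{0,\Omega_j}=O(h_j^{p-m+1/2})$ by \eqref{eq:Upsilonerr} gives one extra power of $h$, so it is acceptable.

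The key step, and the main obstacle, is to show that with the constant coefficient $p_{m+1}(m_j)$ the integral $\int_{\Omega_j}(\Upsilon_{p+1}^m)^{(m+1)}E^{(m+1)}\,dx$ vanishes. By \eqref{eq:Upsilonc}, $(\Upsilon_{p+1}^m)^{(m+1)}$ is a multiple of ${\cal J}^{-1}_{p-m}$ in the scaled variable, which is a Legendre polynomial of degree $p-m$. On the element, $E^{(m+1)}$ is a polynomial of degree at most $p-m-1$. Legendre orthogonality therefore gives zero on each element exactly, with no need for integration by parts or interelement continuity.

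It remains to turn the local factors into the norm bound. By the Cauchy--Schwarz inequality over elements, together with the boundedness of $u^{(p+1)}$, I would bound the sum of the $\Vert E\Vert_{m+1,\Omega_j}$ factors by $\Vert E\Vert_{m+1,\Omega}$, which completes \eqref{eq:superconvergence}.
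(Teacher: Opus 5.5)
Your argument is correct and is essentially the paper's proof: Galerkin orthogonality reduces $a(E,E)$ to $a(W-u,E)$, and the $\Pi_{p+2}$ part is controlled by \eqref{eq:lemmaboundsa1}. In the $a_{m+1}$ term the frozen coefficient $p_{m+1}(m_j)$ is annihilated elementwise by orthogonality of the degree-$(p-m)$ Legendre polynomial $(\Upsilon_{p+1}^m)^{(m+1)}$ against $E^{(m+1)}$, and the $p_{\rm max}^{\prime}$ remainder supplies the extra power of $H$; the only differences are cosmetic (you bound the $k\le m$ terms directly via Theorem~\ref{maintheorem2} instead of first splitting them into $\Pi_{p+1}$ and $\Pi_{p+2}$, and you treat the Taylor remainder of $p_{m+1}$ as an $x$-dependent sup-norm perturbation rather than pulling $p^{\prime}(\chi_j)$ outside the integral, which is if anything cleaner).
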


\begin{proof}
\begin{align}
&a(W_{p,m,\Gamma_{\Omega}}-U_{p,m,\Gamma_{\Omega}},W_{p,m,\Gamma_{\Omega}}-U_{p,m,\Gamma_{\Omega}})=
\sum_{j=1}^Na(W_{p,m,\Gamma_{\Omega}}-U_{p,m,\Gamma_{\Omega}},W_{p,m,\Gamma_{\Omega}}-u)_{\Omega_j} \nonumber \\
&+a(W_{p,m,\Gamma_{\Omega}}-U_{p,m,\Gamma_{\Omega}},u-U_{p,m,\Gamma_{\Omega}})=
\sum_{j=1}^Na(W_{p,m,\Gamma_{\Omega}}-U_{p,m,\Gamma_{\Omega}},\Pi_{p+1,j}^m)_{\Omega_j} \nonumber \\
&+\sum_{j=1}^Na(W_{p,m,\Gamma_{\Omega}}-U_{p,m,\Gamma_{\Omega}},\Pi_{p+2,j}^m)_{\Omega_j}, \label{eq:super1}
\end{align}
\noindent
using \eqref{eq:theorem1a}, \eqref{eq:solna}-\eqref{eq:UFEMa}.  The second term in final step in \eqref{eq:super1} satisfies
\begin{equation}
\sum_{j=1}^Na(W_{p,m,\Gamma_{\Omega}}-U_{p,m,\Gamma_{\Omega}},\Pi_{p+2,j}^m)_{\Omega_j}=a(W_{p,m,\Gamma_{\Omega}}-U_{p,m,\Gamma_{\Omega}},\sum_{j=1}^N \Pi_{p+2,j}^m), \label{eq:super1a}
\end{equation}
\noindent
where $\Pi_{p+2,j}^m(x)$ has been extended to $\Omega \setminus \Omega_j$ by defining it to be zero outside of $\Omega_j$.  Then using \eqref{eq:lemmaboundsa1} with $k=m+1$ and \eqref{eq:energy1}, \eqref{eq:super1a} becomes
\begin{align}
\sum_{j=1}^Na(W_{p,m,\Gamma_{\Omega}}-U_{p,m,\Gamma_{\Omega}},\Pi_{p+2,j}^m)_{\Omega_j}& \le C(p,m,P_{\rm max})\Vert  W_{p,m,\Gamma_{\Omega}}-U_{p,m,\Gamma_{\Omega}}\Vert_{m+1}\sum_{j=1}^N \Vert \Pi_{p+2,j}^m \Vert_{m+1} \nonumber \\
& \le C(p,m,P_{\rm max},\vert \Omega \vert)H^{p-M+1}. \label{eq:secondterm}
\end{align}
\noindent
The first term in the final step in \eqref{eq:super1} satisfies
\begin{align}
\sum_{j=1}^Na&(W_{p,m,\Gamma_{\Omega}}-U_{p,m,\Gamma_{\Omega}},\Pi_{p+1,j}^m)_{\Omega_j}=\sum_{j=1}^Na_{m+1}(W_{p,m,\Gamma_{\Omega}}-U_{p,m,\Gamma_{\Omega}},\Pi_{p+1,j}^m)_{\Omega_j} \nonumber \\
&+\sum_{k=0}^m\sum_{j=1}^Na_{k}(W_{p,m,\Gamma_{\Omega}}-U_{p,m,\Gamma_{\Omega}},\Pi_{p+1,j}^m)_{\Omega_j}. \label{eq:firstterma}
\end{align}
\noindent
Applying Taylor's Theorem to the first term on the right of \eqref{eq:firstterma} leads to
\begin{align}
\sum_{j=1}^N&a_{m+1}(W_{p,m,\Gamma_{\Omega}}-U_{p,m,\Gamma_{\Omega}},\Pi_{p+1,j}^m)_{\Omega_j}=\sum_{j=1}^Np_{m+1}(M_j) (W_{p,m,\Gamma_{\Omega}}-U_{p,m,\Gamma_{\Omega}},\Pi_{p+1,j}^m)_{m+1,\Omega_j} \nonumber \\
&+\sum_{j=1}^Np^{\prime}(\chi_{j})((W_{p,m,\Gamma_{\Omega}}-U_{p,m,\Gamma_{\Omega}})^{(m+1)},(x-M_j)(\Pi_{p+1,j}^m)^{(m+1)})_{0,\Omega_j}. \label{firsttermb1}
\end{align}
\noindent
Due to the orthogonality of the Legendre polynomials the first term on the right of \eqref{firsttermb1} is zero.   The second term on the right of \eqref{firsttermb1} can be bounded as
\begin{align}
\sum_{j=1}^N&p^{\prime}(\chi_{m+1,j})((W_{p,m,\Gamma_{\Omega}}-U_{p,m,\Gamma_{\Omega}})^{(m+1)},(x-M_j)(\Pi_{p+1,j}^m)^{(m+1)})_{0,\Omega_j} \nonumber \\
&\le p_{\rm max}^{\prime}\sum_{j=1}^N\vert W_{p,m,\Gamma_{\Omega}}-U_{p,m,\Gamma_{\Omega}}\vert_{m+1,\Omega_j} \vert (x-M_j)(\Pi_{p+1,j}^m)^{(m+1)} \vert_{m+1,\Omega_j} \nonumber \\
&\le C(P_{\rm max},p_{\rm max}^\prime)\Vert W_{p,m,\Gamma_{\Omega}}-U_{p,m,\Gamma_{\Omega}}\Vert_{m+1} \sum_{j=1}^N\Bigg| \int_{\Omega_j}(x-M_j)(\Pi_{p+1,j}^m)^{(m+1)}dx \Bigg| \nonumber \\
&\le C(p,m,P_{\rm max},p_{\rm max}^\prime,\vert \Omega \vert)\Vert W_{p,m,\Gamma_{\Omega}}-U_{p,m,\Gamma_{\Omega}}\Vert_{m+1}H^{p-m+1}. \label{eq:firsttermb11}
\end{align}
\noindent
where the Cauchy-Schwarz inequality has been used, \eqref{eq:Upsilona}, and \eqref{eq:theorem1a} .  For $k < m+1$
\begin{align}
\sum_{j=1}^N&a_k(W_{p,m,\Gamma_{\Omega}}-U_{p,m,\Gamma_{\Omega}},\Pi_{p+1,j}^m)_{\Omega_j} \le \sum_{j=1}^N \vert W_{p,m,\Gamma_{\Omega}}-U_{p,m,\Gamma_{\Omega}}\vert_{k,\Omega_j}\Bigg|p(\chi_{k,j})\int_{\Omega_j}(\Pi_{p+1,j}^m)^{(k)}dx\Bigg| \nonumber \\
&\le C(p,m,P_{\rm max},\vert \Omega \vert)\Vert W_{p,m,\Gamma_{\Omega}}-U_{p,m,\Gamma_{\Omega}}\vert_{m+1}H^{p+1-k}, \label{eq:firsttermb12}
\end{align}
\noindent
where again \eqref{eq:Upsilona} has been used.  Combining \eqref{eq:super1}, \eqref{eq:secondterm}-\eqref{eq:firstterma}, \eqref{eq:firsttermb11}-\eqref{eq:firsttermb12} yields
\eqref{eq:superconvergence}.
\end{proof}

Thus, $\Vert W_{p,m,\Gamma_{\Omega}}-U_{p,m,\Gamma_{\Omega}} \Vert_{m+1}$  is one order higher than $\Vert u-U_{p,m,\Gamma_{\Omega}}\Vert_{m+1}$ and $\Vert u-W_{p,m,\Gamma_{\Omega}}\Vert_{m+1}$.  This superconvergence property implies the asymptotic equivalence of the finite element solution and the Hermite-Jacobi interpolant.
\begin{corollary}
Let $u(x) \in C_0^{p+2}(\Omega)$ be the solution of \eqref{eq:classicalsolnc}-\eqref{eq:solna}, $U_{p,m,\Gamma_{\Omega}} \in S_0^{p,m,\Gamma_{\Omega}}$ be the solution of \eqref{eq:UFEMa}, and let $W_{p,m,\Gamma_{\Omega}} \in S_0^{p,m,\Gamma_{\Omega}}$ satisfy \eqref{eq:HLOMEGAa}-\eqref{eq:HLOMEGAc}.  Then if 
\begin{equation}
\Vert u-W_{p,m,\Gamma_{\Omega}} \Vert_{m+1} \ge C(p,m)H^{p-m}, \label{eq:bound} 
\end{equation}
\begin{equation}
\frac{\Vert u-U_{p,m,\Gamma_{\Omega}} \Vert_{m+1}}{\Vert u-W_{p,m,\Gamma_{\Omega}} \Vert_{m+1}}=1+{\cal O}(H).
\end{equation}
\end{corollary}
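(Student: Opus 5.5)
The corollary follows from the triangle inequality combined with the superconvergence estimate \eqref{eq:superconvergence} and the lower bound \eqref{eq:bound}. Write
\begin{equation*}
u-U_{p,m,\Gamma_{\Omega}}=(u-W_{p,m,\Gamma_{\Omega}})+(W_{p,m,\Gamma_{\Omega}}-U_{p,m,\Gamma_{\Omega}}).
\end{equation*}
Applying the triangle inequality in $\Vert \cdot \Vert_{m+1}$ in both directions then gives
\begin{equation*}
\Bigl| \Vert u-U_{p,m,\Gamma_{\Omega}}\Vert_{m+1}-\Vert u-W_{p,m,\Gamma_{\Omega}}\Vert_{m+1}\Bigr| \le \Vert W_{p,m,\Gamma_{\Omega}}-U_{p,m,\Gamma_{\Omega}}\Vert_{m+1}.
\end{equation*}
Dividing by $\Vert u-W_{p,m,\Gamma_{\Omega}}\Vert_{m+1}$, which is nonzero by \eqref{eq:bound}, yields
\begin{equation*}
\left| \frac{\Vert u-U_{p,m,\Gamma_{\Omega}} \Vert_{m+1}}{\Vert u-W_{p,m,\Gamma_{\Omega}} \Vert_{m+1}}-1\right| \le \frac{\Vert W_{p,m,\Gamma_{\Omega}}-U_{p,m,\Gamma_{\Omega}}\Vert_{m+1}}{\Vert u-W_{p,m,\Gamma_{\Omega}}\Vert_{m+1}}.
\end{equation*}

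Next I bound the right-hand side. Its numerator is at most ${\tilde C}(p,m,P_{\rm max},p_{\rm max}^{\prime},\vert\Omega\vert)\Vert u\Vert_{p+2}H^{p-m+1}$ by \eqref{eq:superconvergence}. Its denominator is at least $C(p,m)H^{p-m}$ by the hypothesis \eqref{eq:bound}. The quotient is therefore at most $({\tilde C}\Vert u\Vert_{p+2}/C(p,m))\,H$, which is ${\cal O}(H)$ because $u$ is fixed and the constants do not depend on the mesh. This is the claim.

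The only point needing care is that the hypothesis \eqref{eq:bound} is exactly what keeps the denominator from decaying faster than $H^{p-m}$. Without it, a degenerate case such as $u^{(p+1)}\equiv 0$ makes the ratio meaningless. I will justify that \eqref{eq:bound} is natural by noting that, by Theorem \ref{maintheorem2} with $k=m+1$, the leading term of $\Vert u-W_{p,m,\Gamma_{\Omega}}\Vert_{m+1}^2$ is a Riemann-sum approximation to a positive multiple of $H^{2p-2m}\int_\Omega (u^{(p+1)})^2$ on quasi-uniform meshes. Hence \eqref{eq:bound} holds whenever $u^{(p+1)}\not\equiv 0$. This remark is not needed for the proof itself, which is just the two inequalities above.
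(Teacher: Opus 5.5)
Your proposal is correct and follows the same route as the paper: the triangle inequality applied in both directions, the superconvergence estimate \eqref{eq:superconvergence} bounding $\Vert W_{p,m,\Gamma_{\Omega}}-U_{p,m,\Gamma_{\Omega}}\Vert_{m+1}$ by a multiple of $H^{p-m+1}$, and the hypothesis \eqref{eq:bound} to control the denominator. Your closing remark on when \eqref{eq:bound} holds is extra commentary that the paper omits, and the proof does not depend on it.
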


\begin{proof}
From \eqref{eq:superconvergence}, Cauchy-Schwarz, and the triangle inequality
\begin{align}
\Vert u-U_{p,m,\Gamma_{\Omega}} \Vert_{m+1}&=\Vert u-W_{p,m,\Gamma_{\Omega}}+W_{p,m,\Gamma_{\Omega}}-U_{p,m,\Gamma_{\Omega}} \Vert_{m+1} \le \Vert u-W_{p,m,\Gamma_{\Omega}} \Vert_{m+1} \nonumber \\
&+C(p,m,P_{\rm max},p_{\rm max}^\prime,\vert \Omega \vert)H^{p-m+1}, \label{eq:equiva}
\end{align}
\noindent
and similarly
\begin{equation}
\Vert u-W_{p,m,\Gamma_{\Omega}} \Vert_{m+1} \le \Vert u-U_{p,m,\Gamma_{\Omega}} \Vert_{m+1}+C(p,m,P_{\rm max},p_{\rm max}^\prime,\vert \Omega \vert)H^{p-m+1}. \label{eq:equivb}
\end{equation}
\noindent
The result follows immediately from \eqref{eq:bound}-\eqref{eq:equivb}.
\end{proof}

\section{Computational Results}

The goal of this section is to examine the impact of $m$ on the other parameters $p$ and $N$.  While only one example is explored, the results are suggestive. To that end consider
\begin{equation}
0=-(-\Delta)^{m+1}u+u+f(x), \, x \in [0,1], \label{eq:test}
\end{equation}
\noindent
where the boundary conditions and $f(x)$ are chosen so that
\begin{equation}
u(x)={\rm tanh}10(x-1/2).
\end{equation}
\noindent
The solution profile mimics an interior boundary layer at $x=1/2$.  A finite element code written in Python using the Hermite-Jacobi basis has been developed.  It uses the package {\textit {mpmath}} \cite{mpmath} which allows for arbitrary precision arthimetic.  While the code therefore runs considerably slower, using {\textit {mpmath}} addresses the valid concerns about the loss of precision for high order methods in general. For example, in \cite{BS2018} the authors note that in the one-dimensional case the impact of round-off accumulation for fourth-order problems can be "seriously damaging" due to larger condition numbers than for second-order equations.   The {\textit {mpmath}} parameter that controls precision, $mp.dps$ is set to 120 (having run tests for both 80 and 160).

The relationship between the three parameters was explored through three tests.  The first two considered the behavior of the error in the interpolant
\begin{equation}
E_{I,m,p,N}\equiv\Vert u-W_{p,m,N} \Vert_{m+1}, \label{eq:interperror}
\end{equation}
\noindent
(where $\Gamma_{\Omega}$ has been replaced by the number of elements $N$) by examining its leading term.  From \eqref{eq:m1error} and Stirling's approximation it follows that
\begin{equation}
E_{I,m,p,N} \sim \frac{1}{2(2p-2m+1)}\left(\frac{he}{4(p-m)}\right)^{p-m}\left(\int_{\Omega}(u^{(p+1)}(x))^2dx \right)^{1/2}, \label{eq:testcurve}
\end{equation}
\noindent
where $p-m > e$ for $m \ge 2$ since $p \ge 2m+1$.  In the first experiment $p$ is left fixed so as to focus on the relationship between $N$ and $m$.  Thus, in \eqref{eq:testcurve}, $\int_{\Omega}(u^{(p+1)}(x))^2dx$ is held constant and for each $m \in \{2, 3, \ldots 8 \}$ $N$ is chosen to keep the left-hand side of \eqref{eq:testcurve} constant. In particular $p=17$ and the interpolation error is fixed at $1.5969 \times 10^{-4}$. To generate the curve specified by \eqref{eq:testcurve} \cite{mathematica} was used. The results shown in Figure \ref{figure1} indicate that to achieve the same error $N$ must increase exponentially as a function of $m$.  The finite element errors (label values) for the same $N$, $m$, and $p$ match the interpolant errors except when $m=2, 3$ when they are significantly smaller.  In these two cases, $N=6, 10$, the interpolant and finite element solution are in the pre-asymptotic range.

In the second experiment the left-hand side of \eqref{eq:testcurve} is fixed and for each $m$, $N$ is initially set to be 160 until $p$ is selected such that $E_{I,m,p,N} \le 1.2669 \times 10^{-10} < E_{I,m,p-1,N}$.  Then $N$ is decreased until $E_{I,m,p,N} \approx 1.2669 \times 10^{-10}$.  The number of degrees of freedom (dof) as a function of $m$ is displayed in Figure 2.  The growth in the dof is linear as is the growth of $p$ which satisfies $p=2m+6$.  The best fit slope for degrees of freedom for this example is equal to 225 which is not insubstantial. As in experiment one the finite element errors (label values) are close to the interpolant errors.

In experiment three the finite element error,
\begin{equation}
E_{m,p,N}=\Vert u-U_{p,m,N}\Vert_{m+1}    
\end{equation}
\noindent
is compared to $E_{m,p,2N}$ and to $E_{I,m,p,N}$.  To that end let
\begin{equation}
\Theta=\frac{E_{m,p,N}}{E_{I,m,p,N}}, \, \kappa=\frac{E_{m,p,N}}{E_{m,p,2N}},
\end{equation}
\noindent
Equation \eqref{eq:UFEMa} is solved on uniform grids with $\beta=1$ and thus $N=1/H$.  Table \ref{tab:table1} shows the results for $m=2, 5, 8$ and two values of $p$ for each $m$.  The smaller value of $p$ in each case, that is $p=5, 11, 17$, corresponds to a nodal basis only.  For all three values of $m$, $\Theta$ is 1 to five digits.  If interior basis functions are added $\Theta$ is slightly less than 1 until $N$ is sufficiently large.  The rate at which $\kappa$ approaches its theoretical limiting value decreases as $m$ increases.  The {\it a posteriori} error estimation strategy that guided the $h$-, $p$-, and $hp$-adaptivity strategies for $m=0$ and $m=1$ \cite{M2003} was more dependable when $\Theta$ and $\kappa$ were in the asymptotic range.   While obviously limited, these experiments suggest that as $m$ increases more computational resources will be required to achieve the same error and that the adaptive strategies will be less robust.  In particular the tests indicated that {\it h}-refinement on its own becomes a less effective strategy as $m$ increases.

\begin{table}[H]
\begin{center}
\begin{NiceTabular}{|c|c|c|c|c|c|}
\hline
 $p$ & $m$ & $N$ & $\|u-U_{p,m,\Gamma_{\Omega}}\|_{m+1}$ & $\Theta$ & $\kappa$ \\ 
\hline
5 & 2 & 10 & $4.5882 \times 10^{2}$ & 1.0000 & - \\
\hline
5 & 2 & 20 & $5.9066$ & 1.0000 & 7.7679 \\ 
\hline
5 & 2 & 40 & $6.9905 \times 10^{-1}$ & 1.0000 & 8.4495 \\ 
\hline
5 & 2 & 80 & $8.8280 \times 10^{-2}$ & 1.0000 & 7.9185 \\ 
\hline
5 & 2 & 160 & $1.1063 \times 10^{-2}$ & 1.0000 & 7.9795 \\ 
\hline
5 & 2 & 320 & $1.3838 \times 10^{-3}$ & 1.0000 & 7.9949 \\ 
\hline
\hline
10 & 2 & 10 & $2.90127 \times 10^{-1}$ & 0.99821 & - \\
\hline
10 & 2 & 20 & $5.7301 \times 10^{-5}$ & 0.99222 & 507.23 \\ 
\hline
10 & 2 & 40 & $4.99108 \times 10^{-7}$ & 0.99969 & 114.81 \\ 
\hline
10 & 2 & 80 & $1.9970 \times 10^{-9}$ & 0.99993 & 249.93 \\ 
\hline
10 & 2 & 160 & $7.83238 \times 10^{-12}$ & 0.99998 & 254.96 \\ 
\hline
10 & 2 & 320 & $3.0626 \times 10^{-14}$ & 1.0000 & 255.74 \\ 
\hline
\hline
11 & 5 & 10 & $9.7971 \times 10^4$ & 1.0000 & - \\
\hline
11 & 5 & 20 & $3.8238 \times 10^3$ & 1.0000 & 25.621 \\ 
\hline
11 & 5 & 40 & $5.4238 \times 10^{1}$ & 1.0000 & 70.502 \\ 
\hline
11 & 5 & 80 & $8.5042 \times 10^{-1}$ & 1.0000 & 63.778 \\ 
\hline
11 & 5 & 160 & $1.3367 \times 10^{-2}$ & 1.0000 & 63.622 \\ 
\hline
11 & 5 & 320 & $2.09167 \times 10^{-4}$ & 1.0000 & 63.905 \\ 
\hline
\hline
15 & 5 & 10 & $7.5180 \times 10^2$ & 0.9996 & - \\
\hline
15 & 5 & 20 & $7.3459 \times 10^{-1}$ & 0.99854 & 1023.4 \\ 
\hline
15 & 5 & 40 & $1.3382 \times 10^{-3}$ & 1.0000 & 548.94 \\ 
\hline
15 & 5 & 80 & $1.08232 \times 10^{-6}$ & 1.0000 & 1236.4 \\ 
\hline
15 & 5 & 160 & $1.06426 \times 10^{-9}$ & 1.0000 & 1017.0 \\ 
\hline
15 & 5 & 320 & $1.0411 \times 10^{-12}$ & 1.0000 & 1022.2 \\ 
\hline
\hline
17 & 8 & 10 & $2.3131 \times 10^9$ & 1.0000 & - \\
\hline
17 & 8 & 20 & $3.8548 \times 10^6$ & 1.0000 & 600.05 \\ 
\hline
17 & 8 & 40 & $2.8166 \times 10^4$ & 1.0000 & 136.86 \\ 
\hline
17 & 8 & 80 & $4.1533 \times 10^1$ & 1.0000 & 678.17 \\ 
\hline
17 & 8 & 160 & $8.1885 \times 10^{-2}$ & 1.0000 & 507.21 \\ 
\hline
17 & 8 & 320 & $1.60308 \times 10^{-4}$ & 1.0000 & 510.80 \\ 
\hline
\hline
22 & 8 & 20 & $3.07815 \times 10^{6}$ & 0.99987 & - \\
\hline
22 & 8 & 40 & $5.2073 \times 10^{2}$ & 0.9997 & 5,911.1 \\
\hline
22 & 8 & 40 & $2.2392 \times 10^{-2}$ & 0.9997 & 23,255.0 \\
\hline
22 & 8 & 80 & $2.0537 \times 10^{-6}$ & 0.9997 & 10,904.0 \\
\hline
22 & 8 & 160 & $1.2669 \times 10^{-10}$ & 1.0000 & 16,210.0 \\ 
\hline
22 & 8 & 320 & $7.7528 \times 10^{-15}$ & 1.0000 & 16,341.0 \\ 
\hline
\end{NiceTabular}
\end{center}
\caption{Finite element errors, error ratios, and convergence rates for a select set of $m$ and $p$ values.}
\label{tab:table1}
\end{table}

\section{Conclusions}
An explicit formula for Hermite splines for any interelement continuity $C^m$, $m \ge 0$, was presented.  These splines serve as the nodal basis functions for a finite element method for solving two-point boundary equations of arbitrary even order.  The splines are complemented by interior basis functions formed from ultraspherical polynomials multiplied by binomial powers of order $m+1$ at each element end point.  These polynomials were then used to construct an interpolant of the solution.  Formulas for the {\it a priori} error estimates of the interpolation error were presented and extensions of the Lagrange interpolants were introduced.  The leading term in the interpolation error was proportional to the $p+1${\it st} derivative of the solution multiplied by the next highest degree interior basis function. It was shown that the difference between the interpolant and the finite element solution in $H^{m+1}$ is one order higher than the {\it a priori} errors of both which in turn implies the asymptotic equivalence of the interpolant and the finite element solution. Computational results suggest that the performance gap between {\it hp}- and {\it h}-refinement grows with $m$.

The Hermite-Jacobi basis can be extended in a straightforward manner to tensor-product domains in two and three space dimensions (e.g., rectangles, rectangular prism).  They can also be used in adaptive solvers for higher-order time-dependent problems. The connection between the nodal basis functions and those in the interior suggest that this framework can be employed in various types of spectral and spectral element methods.  It is possible to show that the {\it a posteriori} error estimator developed for 2nd- \cite{M2001,M2003} and 4th-order \cite{MR2010} equations can be extended to any $m$ and $p$ which, in turn, can serve as the key driver in a {\it hp}-adaptive finite element solver.  Issues regarding precision as $m$ increases must also be resolved and adaptivity may be part of the solution.

\begin{figure}[H]
\caption{Plot of \eqref{eq:testcurve} (solid curve) as a function of $m$ and $N$ applied to \eqref{eq:test} with $p$ fixed so that the leading term in the interpolant error in $H^{m+1}(\Omega)$ is approximately $0.0001597$.  The labels  are corresponding errors in the finite element solution.}
\begin{center}
\includegraphics[width=15cm]{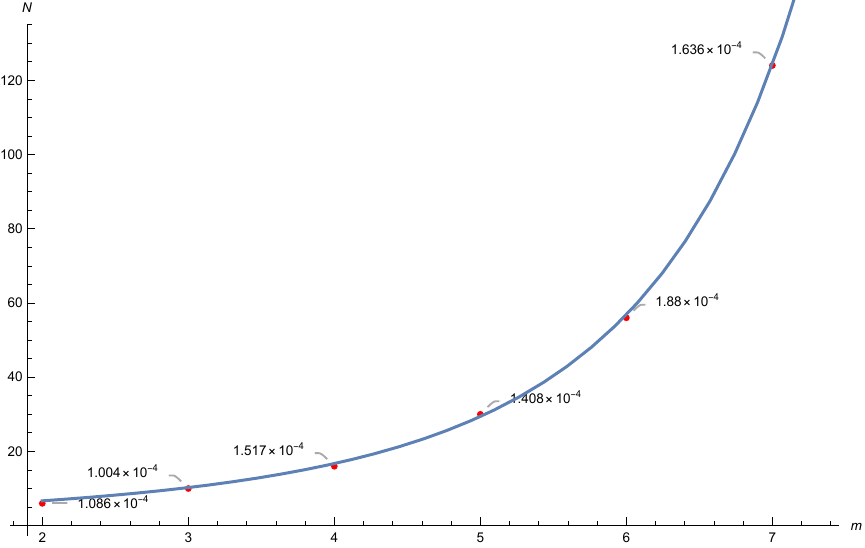}
\end{center}
\label{figure1}
\end{figure}

\begin{figure}[H]
\caption{Plot of the number of degrees of freedom as a function of $m$ so that the leading term in the interpolant error in $H^{m+1}(\Omega)$ is approximately $1.2675 \times 10^{-10}$.  The labels  are corresponding errors in the finite element solution.}
\begin{center}
\includegraphics[width=15cm]{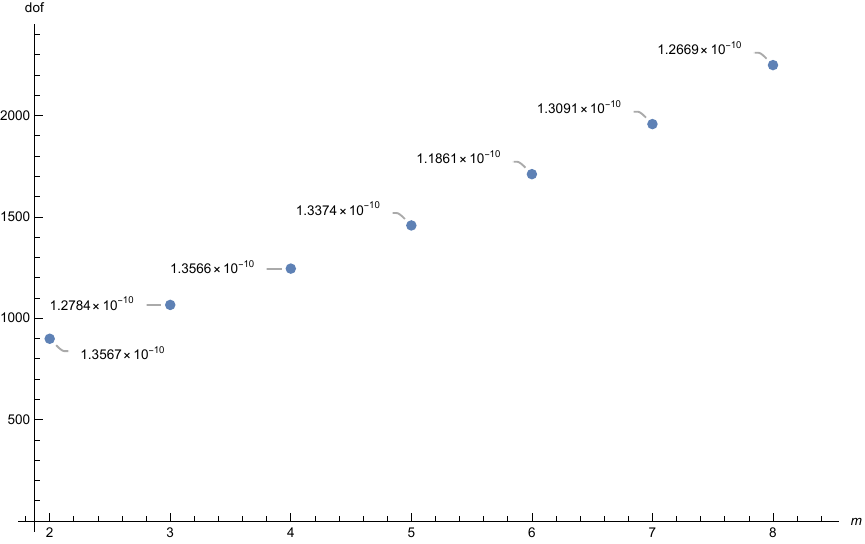}
\end{center}
\label{figure2}
\end{figure}

\section*{Acknowledgements}
The author thanks Tom Hagstrom for bringing to his attention the papers by Birkhoff, Schultz, Varga and by Spitzbart.

\end{document}